\documentclass[10pt,b5paper]{article}

\usepackage[T1]{fontenc}
\usepackage[utf8]{inputenc}
\usepackage[english]{babel}
\usepackage{amsmath,amssymb,amsthm}
\usepackage{lmodern}
\usepackage{geometry}
\usepackage{url}
\usepackage{titlesec}
\titleformat{\paragraph}[runin]{\normalfont\itshape}{}{0pt}{}[.]
\titlespacing*{\paragraph}{0pt}{0.7em}{0.5em}

\newtheorem{theorem}{Theorem}[section]
\newtheorem{lemma}[theorem]{Lemma}
\newtheorem{proposition}[theorem]{Proposition}
\newtheorem{corollary}[theorem]{Corollary}
\theoremstyle{definition}
\newtheorem{definition}[theorem]{Definition}
\theoremstyle{remark}
\newtheorem{remark}[theorem]{Remark}
\numberwithin{equation}{section}

\DeclareMathOperator{\E}{E}
\DeclareMathOperator{\Var}{Var}
\DeclareMathOperator{\Cov}{Cov}
\DeclareMathOperator{\Bin}{Bin}
\DeclareMathOperator{\Mult}{Mult}
\DeclareMathOperator{\sgn}{sgn}
\newcommand{\Z}{\mathbb{Z}}

\title{Discrete total variation under the multinomial,\\ Part~3:\\
expectation under a general distribution}
\author{
Nikola Bani\'c\\
Gideon Brothers, 10000 Zagreb, Croatia\\
\texttt{nbanic@gmail.com}
\and
Neven Elezovi\'c\\
Department of Applied Mathematics,\\
Faculty of Electrical Engineering and Computing,\\
University of Zagreb, 10000 Zagreb, Croatia\\
\texttt{neven.elezovic@fer.hr}
}
\date{\today}

\begin{document}
\maketitle

\begin{abstract}
For multinomial counts $X\sim\Mult(N;\mathbf p)$ on $n$ ordered bins and a reference law
$\mathbf r$ we study the expectation of the reference-centered discrete total variation
$d_{\mathbf r}(X)=\sum_{j=1}^{n-1}|(X_{j+1}-Nr_{j+1})-(X_j-Nr_j)|$, the natural extension of the
comb-test statistic to a non-uniform null. For a non-uniform null the raw total variation of the
counts carries a deterministic staircase of order $N$ that no centering of the statistic removes;
the counts are therefore centered at the reference law, and under the null
($\mathbf p=\mathbf r$) the statistic is the matched $d_{\mathbf p}$. Exact finite-sample
expectations are obtained by conditioning on the total mass in each adjacent pair, which reduces
every edge to a translated binomial absolute-deviation problem. For fixed $n\ge3$ and a sampling
law bounded away from the boundary of the simplex, the matched statistic admits a three-term
asymptotic expansion in powers of $1/N$ whose coefficients are explicit continuous periodic
functions of $N(p_{j+1}-p_j)$ (a generalized, non-Poincar\'e asymptotic expansion), obtained by a
weighted local Edgeworth estimate and a corner Euler--Maclaurin summation; all half-integer orders
vanish by parity. Each correction separates into a smooth Edgeworth part and an oscillatory lattice
part, the latter an Edgeworth perturbation of one formal Euler--Maclaurin series of a Gaussian
lattice sum. Under a sampling law $\mathbf p\neq\mathbf r$ the leading expectation is the
folded-normal profile implied by the central limit theorem; under local alternatives we identify
the $O(1)$ correction (a skewness term, plus a variance-gradient term for Pitman alternatives), and
fixed alternatives give a law-of-large-numbers limit, the
$\ell_1$ distance between the adjacent-gradient profiles of $\mathbf p$ and $\mathbf r$, on which
threshold tests are consistent.
\end{abstract}

\medskip\noindent{\sloppy\textbf{Keywords:} discrete total variation; multinomial distribution;
goodness-of-fit; folded normal distribution; Edgeworth expansion;
local limit theorem; Euler--Maclaurin summation; lattice distribution; Bernoulli polynomials.\par}

\medskip\noindent\textbf{MSC 2020:} Primary 62E20; Secondary 62E15, 62G10, 60F05, 41A60, 65B15.

\section{Introduction}

	The two preceding papers in this series compute the expectation~\cite{banic_elezovic_p1}
	and the variance~\cite{banic_elezovic_p2} of the (linear) discrete total variation
	$d=d_{\mathbf u}=\sum_{i=1}^{n-1}|X_{i+1}-X_i|$ --- the histogram smoothness score
	of~\cite{banic_elezovic_tvor} and the statistic of the comb test~\cite{banic_elezovic_ct} ---
	under the \emph{uniform} null $\mathbf u=(1/n,\dots,1/n)$, the design case of the statistic.
	A practitioner, however, may wish to test a histogram against a \emph{general} reference
	law --- a known but non-uniform null distribution, listed as future work
	in~\cite{banic_elezovic_ct}. The present paper carries the first moment to that setting.

	When the null is not uniform the expected histogram is itself non-flat, and the raw total
	variation of the counts, $\|X\|_V=d_{\mathbf u}(X)$, carries a deterministic staircase of order $N$: by the law of
	large numbers $\|X\|_V/N\to\sum_j|r_{j+1}-r_j|$, which swamps the $O(\sqrt N)$ fluctuation that a
	test must see. (Here ``total variation'' is the variation of the histogram along the bin order, not
	the total-variation distance $\tfrac12\sum_i|p_i-q_i|$ between two laws.) The expected shape must
	be subtracted from the \emph{counts}, not from the statistic: subtracting a constant from
	$\|X\|_V$ leaves a signed sum (see the paragraph \emph{The uncentered statistic} after the proof of Theorem~\ref{thm:E-dXY}). The natural object
	is therefore the reference-centered statistic
	\[
		d_{\mathbf r}(X)=\sum_{j=1}^{n-1}
		|X_{j+1}-X_j-N(r_{j+1}-r_j)|.
	\]
	Under the null the sampling law is the reference law; writing this common law as
	$\mathbf p$, this is the matched statistic $d_{\mathbf p}$, the main object of
	\S\S\ref{sec:E-general}--\ref{sec:E-D1-detailed}. The same statistic $d_{\mathbf r}$ may also
	be evaluated under a different sampling law $\mathbf p$; then each mismatched edge
	carries a deterministic shift, and away from the matched regime its leading
	expectation is governed by an elementary folded-normal profile.

	The quantitative results are the following. The exact finite-$N$ computation,
	derived in \S\ref{sec:E-general} through
	Theorem~\ref{thm:F2-translated}, is
	per-edge: after conditioning on $K_j=X_j+X_{j+1}$, the conditional law of
	$X_{j+1}$ is binomial and the inner expectation is a translated binomial mean
	absolute deviation. This gives an exact finite mixture for $\E(d_{\mathbf p})$, collected in
	Proposition~\ref{prop:E-TVC-exact}, and, with a
	different translation, for $\E_{\mathbf p}(d_{\mathbf r})$
	(Proposition~\ref{prop:dXY-exact}). The closest classical analogue is the expected $\ell_1$
	distance $\sum_i|X_i-Np_i|$, a sum of binomial mean absolute deviations because each $X_i$ is
	binomial~\cite{beirlant1994}; an edge difference $D_j$ is not binomial, which is why the pair
	conditioning is needed. As $N\to\infty$ at fixed
	$n$ and away from boundary probabilities, the matched expectation has the expansion of Theorem~\ref{thm:E-TVC}, with per-edge input
	Lemma~\ref{lem:E-D1-asymp-general},
	\[
		\E(d_{\mathbf p})
		= \sqrt{\frac{2N}{\pi}}\,
		\sum_{j=1}^{n-1}s_j
		\left[1+\frac{c_j(N)}{N}+\frac{c_j^{(2)}(N)}{N^2}\right]+O(N^{-5/2}),
	\]
	where
	\begin{gather*}
		s_j^2=(p_j+p_{j+1})-(p_{j+1}-p_j)^2,\\
		c_j(N)=\frac{\lambda_{3,j}^2-\lambda_{4,j}}{24}
		-\frac{B_2(\{N(p_{j+1}-p_j)\})}{2s_j^2}.
	\end{gather*}
	Here $\lambda_{r,j}$ are the standardized cumulants of the one-trial increment of edge $j$,
	$B_2(t)=t^2-t+\tfrac16$, and $\{\cdot\}$ is the fractional part. Only $s_j$ and the first
	correction $c_j(N)$ are written out here; the longer coefficient $c_j^{(2)}(N)$ is stated in
	Lemma~\ref{lem:E-D1-asymp-general}. The first part of $c_j(N)$ is the smooth Edgeworth
	correction; the second is the lattice correction caused by the position of the kink relative
	to the integer lattice. Because $c_j(N),c_j^{(2)}(N)$ depend on $N$ only through
	$\{N(p_{j+1}-p_j)\}$, and continuously so, the expansion is a generalized (non-Poincar\'e)
	asymptotic expansion: bounded oscillating coefficient functions with a remainder uniform in
	the lattice position (Remark~\ref{rem:expansion-sense}). Each coefficient separates into a
	smooth Edgeworth part and an oscillatory lattice part; the $\lambda$-independent lattice parts
	of all orders are the terms of one formal (divergent) Euler--Maclaurin series of a Gaussian
	lattice sum, of which the actual coefficients are Edgeworth perturbations
	(Remark~\ref{rem:gaussian-backbone}). For $d_{\mathbf r}$ under a sampling law
	$\mathbf p$, treated in \S\ref{sec:E-dXY}, with gradient mismatch
	$\delta_i=(p_{i+1}-p_i)-(r_{i+1}-r_i)$, the leading per-edge term is the
	folded-normal expression $\sigma_iG(N\delta_i/\sigma_i)$
	(Theorem~\ref{thm:E-dXY}). The sub-leading correction under local alternatives is
	isolated in Theorem~\ref{thm:dXY-subleading}, and $d_{\mathbf r}/N$ converges to
	$\sum_i|\delta_i|$ for fixed alternatives in Corollary~\ref{cor:dXY-GC}.

	The paper develops this in four blocks after the multinomial preliminaries.  First,
	\S\ref{sec:E-general} defines the matched statistic and derives the exact finite-$N$
	per-edge formulas, ending with the mixture-conditioning reduction from a general edge
	to a translated binomial problem.  Next, \S\ref{sec:E-TVC-asymp} proves the
	asymptotic expectation of $d_{\mathbf p}$, with the statement in Theorem~\ref{thm:E-TVC}.  The technical proof of the per-edge expansion,
	Lemma~\ref{lem:E-D1-asymp-general}, is given separately in
	\S\ref{sec:E-D1-detailed}, since it combines a weighted local limit theorem and a corner
	Euler--Maclaurin summation of the lattice kink.  The final block, \S\ref{sec:E-dXY},
	treats $d_{\mathbf r}$ under a possibly different sampling law $\mathbf p$:
	Proposition~\ref{prop:dXY-exact} gives its exact finite-$N$ edge expression,
	Theorem~\ref{thm:E-dXY} gives the
	folded-normal leading term, \S\ref{ssec:dXY-n2} records what the companion
	papers~\cite{elezovic_mad,elezovic_folded} contribute in the excluded case $n=2$ --- where the
	statistic collapses to a binomial absolute deviation about the reference mean,
	and the exponentially small correction becomes explicit ---
	Corollary~\ref{cor:dXY-mixed} separates the matched,
	local, and shifted regimes, and Corollaries~\ref{cor:dXY-GC}--\ref{cor:dXY-consistency}
	record the law-of-large-numbers and consistency consequences.  The
	multinomial edge-variable toolbox on which all of this rests --- marginals, the two-stage
	binomial factorization, and the variance of a single edge --- is recalled in
	\S\ref{sec:multinomial-basics}.

	These results form a series on the exact and asymptotic moment theory of discrete total
	variation under the multinomial model, organized by \emph{which} moment is computed and
	\emph{under which} null: the expectation~\cite{banic_elezovic_p1} and the
	variance~\cite{banic_elezovic_p2} under the uniform null; the expectation --- this paper ---
	and the variance~\cite{banic_elezovic_p4} under a general reference law; and, in a final
	part~\cite{banic_elezovic_p5}, the cyclic statistic $d^{c}$ (the wrap edge $|X_1-X_n|$ added),
	to which the exact linear--cyclic identity of~\cite[Lemma~3.6]{banic_elezovic_p2} transfers the
	uniform variance theory. The present paper establishes exact expectation formulas for
	$d_{\mathbf p}$ in the matched null case and for $d_{\mathbf r}$ under a possibly different
	sampling law (for every $n$, $N$ and $\mathbf p$), together with asymptotic expansions in the
	fixed-interior regime: the asymptotic expansions (Theorems~\ref{thm:E-TVC}, \ref{thm:E-dXY},
	\ref{thm:dXY-subleading} and Corollary~\ref{cor:dXY-mixed}) are proved for fixed $n\ge3$ and a
	sampling law in the interior of the simplex; the law of large numbers and the consistency
	statements (Corollaries~\ref{cor:dXY-GC}--\ref{cor:dXY-consistency}) hold for every $n\ge2$; the
	boundary classification of \S\ref{sec:E-TVC-asymp} and the comparison with $\chi^2$ in
	\S\ref{sec:E-dXY} are heuristic; and the tables are numerical checks against the exact formulas. No random second sample is
	introduced; $\mathbf r$ is only the prescribed reference law in the statistic.

\section{Multinomial distribution: basics}
	\label{sec:multinomial-basics}

	This section records the elementary multinomial facts used throughout the paper:
	marginal distributions, the two-stage factorization of a pair of bins, and the
	variance of one edge difference. The statements are standard, but they are
	included with proofs in order to fix notation and to make the later reductions to
	one-dimensional binomial sums unambiguous.

	Two structural consequences are used repeatedly. First, the difference between
	adjacent counts depends only on the two-bin marginal $(X_j,X_{j+1})$. Second,
	after conditioning on the total mass
	$K_j=X_j+X_{j+1}$ in that pair, the remaining randomness is binomial. These two
	facts are the basis for the exact per-edge formulas in \S\ref{sec:E-general} and
	for the asymptotic expansions that follow.

% =====================================================================
\subsection{Definition and basic properties}
	\label{ssec:mult-def}

	Let $X = (X_1, X_2, \ldots, X_n)$ be a random vector following the
	multinomial distribution
	\begin{equation}\label{eq:mult}
		X \sim \Mult(N;\, p_1, p_2, \ldots, p_n), \qquad
		\sum_{i=1}^n p_i = 1,\quad p_i \ge 0.
	\end{equation}
	We interpret the components $X_i$ as the number of observations falling into the $i$-th bin
	in $N$ independent trials with probabilities $p_i$. The associated
	probability mass function is
	\begin{equation}\label{eq:mult-pmf}
		\Pr(X_1 = k_1, \ldots, X_n = k_n) =
		\frac{N!}{k_1!\,k_2!\cdots k_n!}\,p_1^{k_1}p_2^{k_2}\cdots p_n^{k_n},
		\quad \sum_i k_i = N,
	\end{equation}
	and $0$ otherwise. Here and below $0^0:=1$, and the $k_i$ range over nonnegative integers.

	\paragraph{Marginal distribution of a single bin} The marginal distribution of a
	single bin is binomial,
	\begin{equation}\label{eq:mult-marg-1}
		X_i \sim \Bin(N, p_i),
	\end{equation}
	so $\E[X_i] = N p_i$ and $\Var(X_i) = N p_i(1 - p_i)$.

	\paragraph{Covariance structure} The standard computation (via the representation
	$X_i = \sum_{k=1}^N \mathbf 1[Z_k = i]$ with $Z_k$ iid categorical on
	$\{1, \ldots, n\}$ with probabilities $p_i$) gives
	\begin{equation}\label{eq:mult-cov}
		\Cov(X_i, X_j) =
	\begin{cases}
		N p_i (1 - p_i), & i = j, \\
		- N p_i p_j, & i \ne j.
	\end{cases}
	\end{equation}

% =====================================================================
\subsection{Marginal distributions of subsets}
	\label{ssec:mult-marginals}

	The multinomial distribution has the following fundamental property: the marginal
	of any subset of variables is also multinomial, where the
	``remainder'' is treated as one additional bin.

\begin{proposition}[$k$-bin marginal]\label{prop:k-marginal}
	Let $S \subseteq \{1, \ldots, n\}$, $|S| = k$, and let
	$X_S^c := N - \sum_{i \in S} X_i$ be the number of observations outside $S$.
	The augmented vector consisting of the selected counts and the pooled remainder has
	distribution
	\begin{equation}\label{eq:k-marginal}
		\bigl((X_i)_{i \in S},X_S^c\bigr)
		\sim \Mult\bigl(N;\, (p_i)_{i \in S},\; 1 - \textstyle\sum_{i\in S} p_i\bigr),
	\end{equation}
	i.e.~a $(k{+}1)$-bin multinomial where the $(k{+}1)$-th bin is the ``combined remainder''
	with associated probability $1 - \sum_{i\in S} p_i$.
\end{proposition}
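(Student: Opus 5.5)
The plan is to use the categorical representation already invoked for the covariance structure. Write $X_i=\sum_{k=1}^N \mathbf 1[Z_k=i]$ with $Z_1,\dots,Z_N$ i.i.d.\ categorical on $\{1,\dots,n\}$ with probabilities $p_i$. Define the coarsening map $\phi:\{1,\dots,n\}\to S\cup\{\ast\}$ by $\phi(i)=i$ for $i\in S$ and $\phi(i)=\ast$ otherwise. Put $W_k=\phi(Z_k)$.

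The $W_k$ are i.i.d., being measurable functions of i.i.d.\ variables. Each $W_k$ is categorical on the $k+1$ labels $S\cup\{\ast\}$, with $\Pr(W_k=i)=p_i$ for $i\in S$ and
\[
\Pr(W_k=\ast)=\sum_{i\notin S}p_i=1-\sum_{i\in S}p_i .
\]
The count of $W$'s equal to $i\in S$ is exactly $X_i$. The count equal to $\ast$ is $\sum_{i\notin S}X_i=N-\sum_{i\in S}X_i=X_S^c$. Since counts of i.i.d.\ categorical trials are multinomial by definition, \eqref{eq:k-marginal} follows, with \eqref{eq:mult-pmf} as the resulting mass function.

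As an alternative, or as a check, I would also give a direct pmf computation. Fix nonnegative integers $(k_i)_{i\in S}$ with $m:=N-\sum_{i\in S}k_i\ge0$. Sum \eqref{eq:mult-pmf} over all nonnegative $(k_i)_{i\notin S}$ with $\sum_{i\notin S}k_i=m$. Factor $N!/\bigl(\prod_{i\in S}k_i!\,m!\bigr)\prod_{i\in S}p_i^{k_i}$ out of the sum. What remains is $\sum \frac{m!}{\prod_{i\notin S}k_i!}\prod_{i\notin S}p_i^{k_i}$, which equals $\bigl(\sum_{i\notin S}p_i\bigr)^m$ by the multinomial theorem. This reproduces the $(k+1)$-bin multinomial pmf.

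There is no real obstacle here. The only points needing care are degenerate cases. If $S=\{1,\dots,n\}$, the remainder bin has probability $0$ and $X_S^c\equiv0$; the convention $0^0=1$ handles this. If $S=\emptyset$, the statement is trivial. In the pmf route, one should also note that values with $\sum_{i\in S}k_i>N$ have probability zero on both sides.
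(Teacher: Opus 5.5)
Your proof is correct, and your second argument is exactly the paper's proof. The paper fixes $(k_i)_{i\in S}$, sums \eqref{eq:mult-pmf} over $(k_i)_{i\notin S}$ with $\sum_{i\notin S}k_i=m$, and collapses that sum by the multinomial theorem to $\frac{1}{m!}\bigl(1-\sum_{i\in S}p_i\bigr)^m$.

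Your lead argument, coarsening the i.i.d.\ categorical trials, is a genuinely different route. It is the more conceptual of the two. It is precisely the informal reason the paper itself gives later, in Step~1 of \S\ref{sssec:mixture-conditioning-general}: each trial falls in bin $j$, in bin $j+1$, or in the pooled complement. It also extends immediately to merging the blocks of any partition of the bins. And it realizes the original and coarsened counts on one probability space, rather than only matching their laws.

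Its cost is that it depends on identifying the law \eqref{eq:mult-pmf} with the count vector of $N$ i.i.d.\ categorical trials. You use that identification in both directions: to represent $X$ as such counts, and to recognize the coarsened counts as multinomial. This is the standard sequence-counting fact, which the paper takes for granted when it computes the covariances. The pmf computation, by contrast, needs nothing beyond \eqref{eq:mult-pmf} and the multinomial theorem.

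Your handling of the degenerate cases is fine. The only cosmetic issue is that $k$ serves both as $|S|$ and as the trial index (``each $W_k$ is categorical on the $k+1$ labels''), so rename one of them.
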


\begin{proof}
	Writing the corresponding multinomial probability in explicit form
	(\ref{eq:mult-pmf}) and summing over all $(k_i)_{i \notin S}$ with
	fixed $k_i$ for $i \in S$ (under the constraint $\sum_i k_i = N$):
	\[
		\Pr\bigl((X_i)_{i \in S} = (k_i)_{i \in S}\bigr)
		= \frac{N!}{\prod_{i\in S} k_i!}\,\prod_{i\in S}p_i^{k_i}
		\cdot \!\!\sum_{\substack{(k_i)_{i \notin S} \\ \sum_{i\notin S} k_i = m}}\!\!
		\frac{1}{\prod_{i\notin S} k_i!}\prod_{i\notin S} p_i^{k_i},
	\]
	where $m = N - \sum_{i\in S} k_i$. By the multinomial theorem the sum reduces to
	\[
		\frac{1}{m!}\bigl(\sum_{i\notin S} p_i\bigr)^m = 
		\frac{1}{m!}\bigl(1-\sum_{i\in S}p_i  \bigr)^m,
	\] 
	so
	\[
		\Pr\bigl((X_i)_{i \in S} = (k_i)_{i \in S}\bigr)
		= \frac{N!}{(\prod_{i\in S}k_i!)\,m!}\,
		\Bigl(\prod_{i\in S}p_i^{k_i}\Bigr)\bigl(1-{\textstyle\sum_{i\in S}}p_i\bigr)^m,
	\]
	which is exactly the probability mass function of the multinomial with parameters
	$\bigl(N;\,(p_i)_{i\in S},\,1-\sum_{i\in S}p_i\bigr)$.
\end{proof}

	We shall use the pair marginal repeatedly: $(X_j,X_{j+1})$, together with the
	pooled complement, has distribution
	$\Mult\bigl(N;p_j,p_{j+1},1-p_j-p_{j+1}\bigr)$. It is a three-category law
	although only two bins are selected.

% =====================================================================
\subsection{Edge variables}
	\label{ssec:edge-vars}

	The papers in this series are organized around the same local objects: differences
	between neighboring bins. Part~1 and Part~2 treat the uniform reference law, for
	which no centering is visible in these differences; the present paper and Part~4
	use the same edge variables with the non-uniform centering shown below.

\begin{definition}[Edge variables]\label{def:Dj}
	For a multinomial vector $X = (X_1, \ldots, X_n)$ we introduce the \emph{edge variables}
	\begin{equation}\label{eq:Dj-def}
		D_j := X_{j+1} - X_j, \qquad j = 1, \ldots, n-1,
	\end{equation}
	and their centered versions
	\begin{equation}\label{eq:tilde-Dj-def}
		\tilde D_j := D_j - \E[D_j] = D_j - N(p_{j+1} - p_j).
	\end{equation}
	If $p_j=p_{j+1}$ (in particular under the uniform law $p_i\equiv1/n$), then $\E[D_j]=0$
	and $\tilde D_j=D_j$.
\end{definition}

% =====================================================================
\subsection{Factorization of the 2-bin marginal}
	\label{ssec:2bin-factorization}

	If one technique deserves to be called the engine of this paper, it is the
	one in this subsection. The difficulty with the total variation is that the
	summands $|X_{j+1}-X_j|$ are entangled with the global constraint
	$\sum_i X_i = N$. The factorization below cuts that knot: it shows that, as far
	as a single edge is concerned, the entire $n$-bin multinomial reduces to a
	\emph{two-stage} binomial experiment --- first decide how many observations land
	in the pair $\{j, j+1\}$ at all, then split them between the two. Once the
	problem is phrased this way, the absolute value $|X_{j+1}-X_j|$ becomes the
	displacement of an ordinary random walk, making binomial identities directly
	available.

	Concretely, let
	\begin{equation}\label{eq:Kj-def}
		K_j := X_j + X_{j+1},
	\end{equation}
	the total of the pair $(X_j, X_{j+1}) \sim
	\Mult(N; p_j, p_{j+1}, 1-p_j-p_{j+1})$.

\begin{proposition}[Factorization of the 2-bin marginal]\label{prop:2bin-factor}
	Assume $p_j+p_{j+1}>0$. Then $K_j\sim\Bin(N,p_j+p_{j+1})$ and, for every $k$ with
	$\Pr(K_j=k)>0$ (all $0\le k\le N$ if $p_j+p_{j+1}<1$, only $k=N$ if $p_j+p_{j+1}=1$),
	\begin{equation}\label{eq:Xj1-cond}
		X_{j+1} \mid K_j = k \;\sim\; \Bin(k, q_j),
		\qquad q_j := \frac{p_{j+1}}{p_j + p_{j+1}}.
	\end{equation}
	Conditional on $K_j=k$, the complement $X_j=k-X_{j+1}$ is
	$\Bin(k,1-q_j)$.
\end{proposition}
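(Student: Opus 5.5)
The plan is to read everything off the three-category pair marginal of Proposition~\ref{prop:k-marginal} with $S=\{j,j+1\}$:
\[
(X_j,X_{j+1},N-X_j-X_{j+1})\sim\Mult\bigl(N;p_j,p_{j+1},1-p_j-p_{j+1}\bigr).
\]

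\textbf{Step 1: the law of $K_j$.} Applying Proposition~\ref{prop:k-marginal} once more, now to the single pooled category $\{j,j+1\}$, gives $K_j\sim\Bin(N,p_j+p_{j+1})$. Equivalently, sum the joint pmf over $a+b=k$; the binomial theorem turns
\[
\sum_{a+b=k}\frac{k!}{a!\,b!}\,p_j^a p_{j+1}^b
\]
into $(p_j+p_{j+1})^k$. The same computation also describes the support. If $p_j+p_{j+1}<1$, every $k\in\{0,\dots,N\}$ has positive probability. If $p_j+p_{j+1}=1$, we have $(1-p_j-p_{j+1})^{N-k}=0^{N-k}$, which vanishes unless $k=N$; here the convention $0^0=1$ is used.

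\textbf{Step 2: the conditional law.} Fix $k$ with $\Pr(K_j=k)>0$ and $0\le b\le k$. Then
\[
\Pr(X_{j+1}=b\mid K_j=k)
=\frac{\dfrac{N!}{(k-b)!\,b!\,(N-k)!}\,p_j^{k-b}p_{j+1}^{b}(1-p_j-p_{j+1})^{N-k}}
{\dbinom{N}{k}(p_j+p_{j+1})^k(1-p_j-p_{j+1})^{N-k}}.
\]
The positivity assumption guarantees that the factor $(1-p_j-p_{j+1})^{N-k}$ is nonzero, so it cancels. It is also needed to divide by $p_j+p_{j+1}>0$. The multinomial coefficient divided by $\binom Nk$ equals $\binom kb$. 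What remains is
\[
\binom kb\, q_j^{\,b}(1-q_j)^{k-b},
\]
which is the $\Bin(k,q_j)$ pmf. Finally, $X_j=k-X_{j+1}$ on the event $\{K_j=k\}$, and the reflection $b\mapsto k-b$ maps $\Bin(k,q_j)$ onto $\Bin(k,1-q_j)$.

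\textbf{Main obstacle.} None of this is conceptually hard. The only care needed is bookkeeping for the degenerate cases. When $p_j=0$ or $p_{j+1}=0$, the law $q_j\in\{0,1\}$ is a point mass, and the computation still holds under the $0^0=1$ convention. When $p_j+p_{j+1}=1$, one must restrict to $k=N$ so that the cancellation in Step~2 is legitimate.
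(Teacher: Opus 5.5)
Your proposal is correct and takes essentially the paper's route: the conditional law comes from the ratio of the three-category pair marginal to the binomial law of $K_j$, where the multinomial coefficient divided by $\binom Nk$ collapses to $\binom kb$. The differences are cosmetic. The paper obtains $K_j\sim\Bin(N,p_j+p_{j+1})$ by counting trials that land in the pair, rather than by your binomial-theorem summation. Your explicit handling of the support, of the cancellation of $(1-p_j-p_{j+1})^{N-k}$, and of the reflection giving $\Bin(k,1-q_j)$ fills in bookkeeping that the paper leaves implicit.
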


\begin{proof}
	$K_j$ counts how many of the $N$ observations fall into the union of the bins $\{j, j+1\}$
	(success probability $p_j + p_{j+1}$ per independent trial), so it is
	$\Bin(N, p_j+p_{j+1})$.

	For the conditional distribution, by Proposition~\ref{prop:k-marginal} and (\ref{eq:mult-pmf}):
	\begin{align*}
		\Pr(X_{j+1} = m \mid K_j = k)
		&= \frac{\Pr(X_j = k-m, X_{j+1} = m)}{\Pr(K_j = k)} \\
		&= \frac{\binom{N}{k-m, m, N-k}\,p_j^{k-m}p_{j+1}^m (1-p_j-p_{j+1})^{N-k}}
		{\binom{N}{k}(p_j+p_{j+1})^k (1-p_j-p_{j+1})^{N-k}} \\
		&= \binom{k}{m}\,q_j^m (1-q_j)^{k-m},
	\end{align*}
	which is the probability mass function of $\Bin(k, q_j)$.
\end{proof}

	\paragraph{Implication for the differences $D_j = X_{j+1} - X_j$} Conditional on $K_j = k$:
	$D_j = 2X_{j+1} - k$, so $D_j \mid K_j = k$ has a shifted binomial distribution.
	When $p_j=p_{j+1}$ (in particular under the uniform law), $q_j=1/2$, so
	$X_{j+1} \mid K_j \sim \Bin(k, 1/2)$, which gives $D_j \mid K_j = k$ as
	the position of a symmetric $\pm 1$ random walk of $k$ steps. This
	identification is central to the derivations that follow.

% =====================================================================
\subsection{Variance of one edge}
	\label{ssec:edge-vars-var}

	The one second-order fact needed in the expectation asymptotics is the variance of
	a single edge. From this point on the absolute values enter, and with them the
	nonstandard part of the analysis.

\begin{lemma}[Variance of an edge variable]\label{lem:cov-Dj-multinomial}
	Under the multinomial distribution $\Mult(N; p_1, \ldots, p_n)$, each edge variable
	has variance
	\begin{equation}\label{eq:Var-Dj}
		\Var(D_j) = N\bigl[(p_j + p_{j+1}) - (p_j - p_{j+1})^2\bigr],
	\end{equation}
	unchanged by centering: $\Var(\tilde D_j) = \Var(D_j)$.
\end{lemma}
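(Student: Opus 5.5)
The plan is to expand $\Var(D_j)=\Var(X_{j+1}-X_j)$ by bilinearity and read off each term from the covariance structure (\ref{eq:mult-cov}):
\[
	\Var(D_j)=\Var(X_{j+1})+\Var(X_j)-2\Cov(X_j,X_{j+1})
	=Np_{j+1}(1-p_{j+1})+Np_j(1-p_j)+2Np_jp_{j+1}.
\]
Collecting terms, the bracket equals $(p_j+p_{j+1})-(p_j^2-2p_jp_{j+1}+p_{j+1}^2)=(p_j+p_{j+1})-(p_j-p_{j+1})^2$, which is (\ref{eq:Var-Dj}). The invariance under centering holds because $\tilde D_j=D_j-N(p_{j+1}-p_j)$ differs from $D_j$ by a deterministic constant, and variance is translation invariant.

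As a cross-check, and to tie the result to the engine of \S\ref{ssec:2bin-factorization}, I would also verify the formula by the law of total variance using Proposition~\ref{prop:2bin-factor}. Conditional on $K_j=k$ we have $D_j=2X_{j+1}-k$ with $X_{j+1}\mid K_j=k\sim\Bin(k,q_j)$. Hence $\E[D_j\mid K_j]=K_j(2q_j-1)$ and $\Var(D_j\mid K_j)=4K_jq_j(1-q_j)$.

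Write $s=p_j+p_{j+1}$. Then
\[
	\E\Var(D_j\mid K_j)=4Nsq_j(1-q_j)=\frac{4Np_jp_{j+1}}{s}
\]
and
\[
	\Var\E(D_j\mid K_j)=(2q_j-1)^2Ns(1-s)=\frac{(p_{j+1}-p_j)^2N(1-s)}{s}.
\]
Adding these and using $4p_jp_{j+1}+(p_{j+1}-p_j)^2=s^2$ gives $N[s-(p_{j+1}-p_j)^2]$, in agreement with the direct computation. The degenerate case $s=0$ is handled directly: there $D_j\equiv0$ and the formula gives $0$.

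No step is a genuine obstacle. The only point needing care is the sign of the cross term, since the negative covariance $-Np_jp_{j+1}$ enters with a minus sign and so contributes $+2Np_jp_{j+1}$, together with the algebraic regrouping into the stated square.
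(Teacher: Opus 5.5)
Your proposal is correct, and your main argument is exactly the paper's proof: expand $\Var(X_{j+1}-X_j)$ by bilinearity, insert the multinomial covariances \eqref{eq:mult-cov} (the negative covariance contributes $+2Np_jp_{j+1}$), regroup into the square, and note that centering by a constant leaves the variance unchanged. Your law-of-total-variance cross-check via Proposition~\ref{prop:2bin-factor} is also correct, including the degenerate case $p_j+p_{j+1}=0$, but the proof does not need it.
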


\begin{proof}
	By bilinearity, $\Var(D_j) = \Var(X_{j+1}) + \Var(X_j) - 2\Cov(X_j, X_{j+1})$.
	Substituting the multinomial moments \eqref{eq:mult-cov},
	\begin{align*}
		\Var(D_j) &= N p_{j+1}(1-p_{j+1}) + N p_j(1-p_j) + 2N p_j p_{j+1} \\
		&= N\bigl[(p_j+p_{j+1}) - (p_j-p_{j+1})^2\bigr].
	\end{align*}
	Centering shifts the mean only, so $\Var(\tilde D_j) = \Var(D_j)$.
\end{proof}

	For a general law the second moment
	$\E(X_{j+1}-X_j)^2=N^2(p_{j+1}-p_j)^2+N\bigl[(p_j+p_{j+1})-(p_j-p_{j+1})^2\bigr]$ appears
	in~\cite[Eq.~(11)]{banic_elezovic_tvor}; the bracket is \eqref{eq:Var-Dj}.

\section{The matched statistic under a general reference law}
	\label{sec:E-general}

	The discrete total variation of a histogram was introduced as a smoothness score
	in~\cite{banic_elezovic_tvor} and turned into a uniformity test in~\cite{banic_elezovic_ct}.
	For testing hypotheses of the type ``$\mathbf X\sim\Mult(N; \mathbf r)$'' with a non-uniform
	$\mathbf r$ we center the \emph{counts} at the reference law before taking the discrete total
	variation $\|y\|_V:=\sum_{j=1}^{n-1}|y_{j+1}-y_j|$; the uncentered statistic $\|X\|_V$
	of~\cite{banic_elezovic_tvor,banic_elezovic_ct} is the case $\mathbf r=\mathbf u$. For a
	probability vector $\mathbf r=(r_1,\ldots,r_n)$, in the series notation
	$d_{\mathbf r}(X)=\|X-N\mathbf r\|_V$,
	\begin{equation}\label{eq:dr-def}
		d_{\mathbf r}(X):=\sum_{j=1}^{n-1}\bigl|(X_{j+1}-Nr_{j+1})-(X_j-Nr_j)\bigr|
		=\sum_{j=1}^{n-1}\bigl|D_j-N(r_{j+1}-r_j)\bigr|.
	\end{equation}
	The present section treats the matched, or null, case: the sampling and reference
	laws coincide, and we then abbreviate
	$d_{\mathbf p}:=d_{\mathbf r}|_{\mathbf r=\mathbf p}$: the statistic with reference $\mathbf r$,
	evaluated when the sampling law $\mathbf p$ equals it. In this case
	$d_{\mathbf p}$ removes the deterministic edge means and leaves the
	fluctuation-scale quantity whose expectation is computed below. The final part of
	the paper keeps the same statistic $d_{\mathbf r}$ but evaluates its expectation
	when the sampling law is a possibly different vector $\mathbf p$.

% =====================================================================
\subsection{Setup: the matched statistic}
	\label{ssec:tvc-setup}

\begin{definition}[Matched statistic $d_{\mathbf p}$]\label{def:tvc}
	For $X \sim \Mult(N; \mathbf p)$, with the same vector used as reference, define
	\begin{equation}\label{eq:tvc}
		d_{\mathbf p} := d_{\mathbf r}|_{\mathbf r=\mathbf p}
		= \sum_{j=1}^{n-1}|X_{j+1} - X_j - N(p_{j+1} - p_j)| = \sum_{j=1}^{n-1}|\tilde D_j|,
	\end{equation}
	where the $\tilde D_j$ are centered edge variables (\ref{eq:tilde-Dj-def}). Under
	the uniform distribution $p_{j+1}-p_j \equiv 0$, so $\tilde D_j \equiv D_j$ and
	$d_{\mathbf p} \equiv d$.
\end{definition}

	\paragraph{Motivation for centering} For non-uniform $\mathbf p$, every edge with
	$p_{j+1}\ne p_j$ has $\E[D_j]\ne0$, so $|D_j|$ actually accumulates the deterministic
	shift ($|D_j| \approx |\E D_j| + $ stochastic part for large $N$). The statistic
	$d_{\mathbf p}$,
	by isolating $\tilde D_j$, extracts only the stochastic component. Indeed
	$\Pr(\sgn D_j\ne\sgn\E D_j)\le e^{-N(p_{j+1}-p_j)^2/2}$ (Hoeffding~\cite{hoeffding1963}), so on an edge with
	$p_{j+1}\ne p_j$ one has $\E|D_j|=N|p_{j+1}-p_j|+O(e^{-cN})$: the fluctuation term that
	$d_{\mathbf p}$ retains is lost (this is $d_{\mathbf r}$ with $\mathbf r=\mathbf u$ in
	\S\ref{sec:E-dXY}).
	Practical consequences:
	\begin{itemize}
		\item Under $H_0$: the sampling law equals the reference law, here denoted
		$\mathbf p$. In the practitioner's statistic $d_{\mathbf p}$ the deterministic
		$O(N)$ trend is removed exactly. Its null law still depends on
		$\mathbf p$ --- its leading mean through the edge variances, the corrections through the
		positions of the kinks $N(p_{j+1}-p_j)$. Cross-edge correlations are not needed
		for the expectation; they enter the variance theory, treated in Part~4 of the
		series \cite{banic_elezovic_p4} (for the uniform null see
		\cite[Lemma~2.4 and Theorems~4.6, 5.3]{banic_elezovic_p2}).
		\item Under $H_1$: the statistic used for the test remains $d_{\mathbf r}$, with
		$\mathbf r$ fixed by the null, while the sampling law is some $\mathbf p\ne\mathbf r$.
		This is the situation analyzed in \S\ref{sec:E-dXY}; the deterministic gradient
		mismatch then contributes to the expectation.
	\end{itemize}

	\paragraph{Why mean-centering} Centering each edge at its \emph{mean} is a design
	choice, and two natural variants deserve mention. The \emph{studentized} statistic
	$\sum_j|\tilde D_j|/s_j$ (with $s_j$ the per-edge standard deviation scale of
	\S\ref{sec:E-TVC-asymp}) has null expectation, for interior $\mathbf p$,
	$\sqrt{2N/\pi}\,(n-1)\,(1+O(N^{-1}))$, whose \emph{leading term} is free of
	$\mathbf p$; the full null law still depends on $\mathbf p$, so this is not
	asymptotic pivotality.
	\emph{Median-centering} would minimize each $\E|D_j-c|$ and keep the threshold on
	the lattice, which would change the oscillatory correction met below (we do not
	determine what replaces it). We adopt
	mean-centering because it makes each summand the absolute value of an exactly
	centered variable (so no
	location bias enters the folded-normal analysis of \S\ref{sec:E-dXY}), because it
	is the exact null instance $d_{\mathbf p}$ of the practitioner's statistic
	\eqref{eq:dr-def}, and because it continues the uniform statistic of
	Parts~1--2~\cite{banic_elezovic_p1,banic_elezovic_p2}. The \emph{exact} machinery below is
	indifferent to the choice: the translated formula of
	Theorem~\ref{thm:F2-translated} admits an arbitrary deterministic shift and the
	outer mixture is generic, so either variant is covered at no extra cost. The
	\emph{asymptotic} corrections are not indifferent: they record the position of the
	threshold relative to the integer lattice, so moving the centering to a lattice
	median would change them.

% =====================================================================
\subsection{Exact computation of the two-bin expectation}
	\label{ssec:exact-2bin}

	The main building block for $\E(d_{\mathbf p})$ is the exact two-bin expectation at
	finite $N$. The reason for isolating it is simple but important. Once the total
	number of observations falling into a neighboring pair is fixed, all remaining
	randomness is just the allocation between the two bins. Thus every edge
	expectation in the full $n$-bin model is obtained by first solving this two-bin
	allocation problem and then averaging over the random pair-total. The two-bin
	formula is therefore not a side calculation: it is the kernel of the exact
	finite-$N$ computation.

	The version needed for $d_{\mathbf p}$ is the \emph{translated} one, because the centered
	edge subtracts the unconditional mean $N(p_{j+1}-p_j)$ and not the conditional
	mean after the pair-total has been fixed; the shift is therefore a general real
	number, in general not an integer. The identity is classical: the partial
	expectation $\E[(c-Y)^+]$ is the binomial \emph{stop-loss} formula of actuarial
	mathematics; the truncated first moment on which it rests is Todhunter's
	formula~\cite[pp.~182--183]{todhunter1865}, abstracted for arbitrary limits as Lemma~1 of
	Diaconis and Zabell~\cite[p.~288]{diaconis_zabell}, who also tell the history back to
	De~Moivre; and the absolute moment of a discrete law about an arbitrary constant was given in
	closed form by Katti~\cite{katti1960}. We state it in the form used here; the new element is
	the outer mixture over $K_j$ and its use for adjacent multinomial differences.

	The unshifted case $t=0$ needs no separate treatment here: it is the $t=0$
	instance of Theorem~\ref{thm:F2-translated} below, which holds for an arbitrary
	real shift. At $t=0$ the fold sits at $k/2$. For $d_{\mathbf p}$ this case means
	$p_j=p_{j+1}$, so $q_j=\tfrac12$, the fold is the conditional mean, and De~Moivre's
	one-term collapse of Part~1 applies; for $q\neq\tfrac12$ (the raw statistic, or
	$d_{\mathbf r}$ under $\mathbf p\neq\mathbf r$ on an edge with $r_j=r_{j+1}$) the fold
	is off the mean and a binomial distribution function survives --- exactly what the
	translated formula returns. At $n=2$ both regimes are classical binomial problems: the
	matched one is the binomial mean absolute deviation~\cite[Theorem~4.2]{elezovic_mad},
	and the unshifted one is the biased walk, a fold at the prescribed center
	$N/2$~\cite[\S4]{elezovic_folded}.

\begin{theorem}[Translated two-bin expectation]\label{thm:F2-translated}
	Let $Y\sim\Bin(k,q)$ for an integer $k\ge0$, $\bar q:=1-q$, and let
	$t\in\mathbb R$ be an arbitrary
	(generally non-integer) shift. With $c:=\tfrac{k+t}{2}$, $m:=\lfloor c\rfloor$,
	and the binomial CDF $B(\ell;\kappa,q):=\Pr\{\Bin(\kappa,q)\le\ell\}$
	(with $B(\ell;\kappa,q):=0$ for $\ell<0$ and
	$B(\ell;\kappa,q):=1$ for $\ell\ge\kappa$), where the term multiplied by
	$kq$ is interpreted as zero when $k=0$,
	\begin{equation}\label{eq:F2-translated}
		\E\,|2Y-k-t|
		= k(q-\bar q)-t + 4\bigl[\,c\,B(m;k,q) - kq\,B(m-1;k-1,q)\,\bigr].
	\end{equation}
\end{theorem}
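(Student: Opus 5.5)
The plan is to write the absolute value as a signed term plus a truncated part. For any real $x$ we have $|x| = x + 2(-x)^+$. Apply this with $x = 2Y-k-t = 2(Y-c)$, which gives $|2Y-k-t| = 2(Y-c) + 4(c-Y)^+$. Taking expectations and using $\E Y = kq$, the first term is $2kq - k - t = k(q-\bar q) - t$. This is exactly the linear part of \eqref{eq:F2-translated}, so the whole statement reduces to the stop-loss identity
\[
	\E\,(c-Y)^+ = c\,B(m;k,q) - kq\,B(m-1;k-1,q).
\]

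To prove this identity, note that $Y$ is integer valued, so $c - Y > 0$ exactly when $Y \le m = \lfloor c\rfloor$. When $c$ is an integer, the boundary point $Y = c$ contributes $0$, so including it is harmless. Hence $\E(c-Y)^+ = \sum_{y\le m}(c-y)\Pr(Y=y) = c\,\Pr(Y\le m) - \E[Y\,\mathbf 1\{Y\le m\}]$, and the first piece is $c\,B(m;k,q)$. For the truncated first moment I will use the standard absorption identity $y\binom{k}{y} = k\binom{k-1}{y-1}$, valid for $k\ge1$. It gives $y\binom{k}{y}q^y\bar q^{k-y} = kq\binom{k-1}{y-1}q^{y-1}\bar q^{(k-1)-(y-1)}$. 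Summing over $1\le y\le m$ and reindexing $y' = y-1$ yields $kq\,B(m-1;k-1,q)$. This is Todhunter's truncated-moment formula cited in the paper.

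What remains is to check the boundary conventions, which I expect to be the only delicate step.
\begin{itemize}
	\item If $m<0$ (that is, $c<0$), both CDFs vanish by the stated conventions, and indeed $(c-Y)^+ = 0$.
	\item If $m\ge k$, then $B(m;k,q) = 1$ and $B(m-1;k-1,q) = 1$ because $m-1 \ge k-1$. The identity then reads $c - kq = \E(c-Y)$, which is correct since $c \ge m \ge k \ge Y$.
	\item The case $m = 0$ gives $B(-1;k-1,q) = 0$, consistent with the empty sum over $1\le y\le 0$.
	\item The case $k=0$ is covered by the stated interpretation of the $kq$ term as zero. There $Y\equiv0$, and the formula reduces to $-t + 4c\,\mathbf 1\{c\ge0\} = -t + 2t^+ = |t|$, as required.
	\item Degenerate $q\in\{0,1\}$ needs no separate argument, since the absorption identity holds termwise with $0^0 := 1$.
\end{itemize}
Combining the linear part with four times the stop-loss identity gives \eqref{eq:F2-translated}.
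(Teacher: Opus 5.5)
Your proposal is correct and follows the paper's proof essentially step for step. Both use the decomposition $|x|=x+2(-x)^+$ (the paper writes it as $|Y-c|=(Y-c)+2(c-Y)^+$), reduce to the stop-loss quantity $\E(c-Y)^+$ truncated at $m=\lfloor c\rfloor$, and evaluate the truncated first moment with the absorption identity $y\binom{k}{y}q^y\bar q^{\,k-y}=kq\Pr\{\Bin(k-1,q)=y-1\}$. Your explicit boundary checks ($m<0$, $m\ge k$, $k=0$, degenerate $q$) are somewhat more detailed than the paper's appeal to the CDF conventions. One wording slip: for integer $c$ the event $\{c-Y>0\}$ is $\{Y\le m-1\}$, not $\{Y\le m\}$, though your next sentence already notes that the endpoint $Y=c$ contributes zero.
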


\begin{proof}
	Since $2Y-k-t = 2(Y-c)$, $\E|2Y-k-t| = 2\,\E|Y-c|$. Writing
	$|Y-c| = (Y-c) + 2(c-Y)^+$ and taking expectations gives
	$\E|Y-c| = (kq-c) + 2\,\E[(c-Y)^+]$, hence
	$\E|2Y-k-t| = 2(kq-c) + 4\,\E[(c-Y)^+]$ with $2(kq-c) = k(q-\bar q)-t$. Only
	$y<c$ contribute to the partial expectation. If $c$ is an integer, the endpoint
	$y=c$ contributes zero, so in all cases the sum may harmlessly be written through
	$m=\lfloor c\rfloor$.  If $m<0$ the sum is empty, and if $m\ge k$ it is the full
	binomial sum; these two boundary cases are exactly the CDF conventions in the
	statement:
	\[
		\E[(c-Y)^+] = \sum_{y=0}^{\min(m,k)}(c-y)\binom{k}{y}q^y\bar q^{\,k-y}
		= c\,B(m;k,q) - \sum_{y=1}^{m} y\binom{k}{y}q^y\bar q^{\,k-y}.
	\]
	Here and below terms outside $0\le y\le k$ are understood as zero; the stated
	CDF conventions also cover the cases $m<0$ and $m\ge k$.
	Using $y\binom{k}{y}q^y\bar q^{\,k-y} = kq\binom{k-1}{y-1}q^{y-1}\bar q^{\,(k-1)-(y-1)}
	= kq\,\Pr\{\Bin(k-1,q)=y-1\}$, the last sum equals $kq\,B(m-1;k-1,q)$, which is
	\eqref{eq:F2-translated}.
\end{proof}

% =====================================================================
\subsubsection{Generalization to general $n$: reduction to the pair marginal and mixture-conditioning}
	\label{sssec:mixture-conditioning-general}

	Now we want to compute $\E|\tilde D_j|$ for $n$ bins. The key
	idea is that $|\tilde D_j| = |X_{j+1} - X_j - N(p_{j+1}-p_j)|$
	\emph{depends only on the random pair $(X_j, X_{j+1})$}, and not on the other
	$X_i$ for $i \notin \{j, j+1\}$. Hence, we can restrict ourselves to
	studying the marginal distribution of this pair.

	\paragraph{Step 1: reduction to the pair marginal} By
	Proposition~\ref{prop:k-marginal}, the marginal distribution of the pair $(X_j, X_{j+1})$
	under $\Mult(N; p_1, \ldots, p_n)$ is the three-category law carried by the two selected
	bins and their pooled complement,
	\begin{equation}\label{eq:3-bin-marginal}
		(X_j, X_{j+1}) \sim \Mult\bigl(N; p_j, p_{j+1}, 1 - p_j - p_{j+1}\bigr).
	\end{equation}
	The third ``bin value'' $1 - p_j - p_{j+1}$ represents the aggregate probability
	of all other bins; the corresponding variable $X_j' := N - X_j - X_{j+1}$ is not
	of interest for $|\tilde D_j|$.
	Indeed, each trial has exactly three relevant outcomes: it falls in bin $j$, in
	bin $j+1$, or in their complement. Aggregating all complementary outcomes before
	counting therefore gives the stated three-category multinomial law. This is why
	the remaining $n-2$ coordinates can be discarded without approximation.

	\paragraph{Step 2: factorization of the 3-bin Mult into binomial + conditional binomial}
	By Proposition~\ref{prop:2bin-factor}, $K_j := X_j + X_{j+1} \sim \Bin(N, p_j + p_{j+1})$
	and $X_{j+1} \mid K_j = k \sim \Bin(k, q_j)$ with $q_j := p_{j+1}/(p_j + p_{j+1})$.
	The first statement counts the trials that fall anywhere in the selected pair.
	Given that exactly $k$ trials do so, those $k$ trials are independently allocated
	to bin $j+1$ with the renormalized probability $q_j$. Algebraically, if
	$\tau_j:=p_j+p_{j+1}$, then
	\[
		\Pr(X_{j+1}=y\mid K_j=k)
		=\frac{\binom{k}{y}p_{j+1}^{\,y}p_j^{\,k-y}}{\tau_j^k}
		=\binom{k}{y}q_j^y(1-q_j)^{k-y}.
	\]
	This factorization assumes $\tau_j>0$. If $\tau_j=0$, both selected bin counts
	vanish almost surely and the centered edge contribution is identically zero,
	so no conditional parameter is needed.

	\paragraph{Step 3: computation of $\E|\tilde D_j|$ via the factorization} Conditional on
	$K_j = k$:
	\begin{itemize}
		\item $D_j = X_{j+1} - X_j = 2X_{j+1} - k$.
		\item $\tilde D_j \mid K_j = k = 2X_{j+1} - k - N(p_{j+1}-p_j)$.
	\end{itemize}
	Note that $\tilde D_j$ is centered around the \emph{unconditional} expectation
	$N(p_{j+1}-p_j)$, and not around the conditional $\E[D_j \mid K_j]$. So the computation goes in
	two layers. First, for fixed $k$, let us introduce the translation constant
	$t:=N(p_{j+1}-p_j)$ (which in general is neither the conditional mean $k(2q_j-1)$ nor a point
	of the lattice $k+2\Z$ of $2X_{j+1}-k$, so it is not a mere recentering). Then
	\[
		\E[|2X_{j+1} - k - t| \mid K_j = k]
		= \E_{Y \sim \Bin(k, q_j)}\,|2Y - k - t|,
	\]
	which is the \emph{translated} two-bin expectation of
	Theorem~\ref{thm:F2-translated}, with input $k$ (instead of $N$), parameter
	$q_j$ (instead of $q$), and the shift $t$. For $t=0$ (an edge with
	$p_j=p_{j+1}$) it reduces to the unshifted expectation $\E|2Y-k|$.
	Notice the distinction between the two means:
	\[
		\E[D_j\mid K_j=k]=k(2q_j-1),
		\qquad
		\E D_j=N(p_{j+1}-p_j)=N\tau_j(2q_j-1).
	\]
	For $q_j\neq\tfrac12$ they agree only when $k=N\tau_j$ (possible only if $N\tau_j$ is an
	integer), i.e.\ only at the average value of the random pair-total.  This is the reason a translated formula is unavoidable: after
	conditioning on $K_j=k$, the centered threshold is usually off the conditional
	center and also off the integer lattice.

	Second, we average over $K_j \sim \Bin(N, p_j + p_{j+1})$:
	\begin{equation}\label{eq:E-D-mixture}
		\E|\tilde D_j| = \sum_{k=0}^{N}\Pr(K_j = k)\cdot
		\E[|2X_{j+1} - k - N(p_{j+1}-p_j)| \mid K_j = k].
	\end{equation}
	This is only the law of total expectation applied after the exact two-stage
	factorization, so no approximation has entered.  Formula \eqref{eq:E-D-mixture}
	is exact for any finite $N$ and any non-uniform $\mathbf p$. For a general
	translation $t=N(p_{j+1}-p_j)$, the sign changes
	at $Y=(k+t)/2$, so the inner expectation is a finite binomial sum split at
	$m=\lfloor(k+t)/2\rfloor$. The translated closed form is given by
	Theorem~\ref{thm:F2-translated}. For numerical use the formula is stable in double
	precision (relative error below $10^{-13}$ against exact rational arithmetic for $N\le400$ on the
	edges we tested); see also the remark after Proposition~\ref{prop:E-TVC-exact}.

	\paragraph{Step 4: $\E(d_{\mathbf p})$ as a sum over edges} By the definition
	$d_{\mathbf p} = \sum_{j=1}^{n-1}|\tilde D_j|$, and the expectation is linear, so
	\begin{equation}\label{eq:E-TVC-exact}
		\E(d_{\mathbf p}) = \sum_{j=1}^{n-1}\E|\tilde D_j|.
	\end{equation}

	The four steps combine into the exact finite-$N$ formula, which we record as a
	proposition because it is one of the main results of the paper and the starting point
	of every numerical evaluation below.

\begin{proposition}[Exact finite-$N$ expectation of $d_{\mathbf p}$]\label{prop:E-TVC-exact}
	Let $X\sim\Mult(N;\mathbf p)$ and fix an edge $j$. Write
	$\tau_j:=p_j+p_{j+1}$ and, when $\tau_j>0$,
	\[
		q_j:=\frac{p_{j+1}}{\tau_j},\qquad
		t_j:=N(p_{j+1}-p_j),\qquad
		c_k:=\frac{k+t_j}{2},\qquad
		m_k:=\lfloor c_k\rfloor .
	\]
	Then, for every finite $N$ and every $\mathbf p$,
	\begin{equation}\label{eq:E-D-mixture-closed}
		\E|\tilde D_j|
		=\sum_{k=0}^{N}\binom Nk\,\tau_j^{\,k}(1-\tau_j)^{N-k}\,\Lambda_j(k),
	\end{equation}
	where the inner (conditional) expectation is
	\[
		\Lambda_j(k):=k(2q_j-1)-t_j
		+4\bigl[\,c_k\,B(m_k;k,q_j)-k\,q_j\,B(m_k-1;k-1,q_j)\,\bigr]
	\]
	(in particular $\Lambda_j(0)=|t_j|$: both bins are then empty, so $D_j=0$ and
	only the centering $t_j$ survives),
	with the binomial CDF $B(\cdot\,;\cdot,\cdot)$ and the conventions of
	Theorem~\ref{thm:F2-translated}, and
	\[
		\E(d_{\mathbf p})=\sum_{j=1}^{n-1}\E|\tilde D_j| .
	\]
	If $\tau_j=0$ the edge contributes $0$.
\end{proposition}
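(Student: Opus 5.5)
The plan is to assemble the four steps of \S\ref{sssec:mixture-conditioning-general} into a single argument, treating the degenerate edge $\tau_j=0$ separately.

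First, the case $\tau_j=0$. Then $p_j=p_{j+1}=0$, so $X_j=X_{j+1}=0$ almost surely and $t_j=0$. Hence $\tilde D_j=0$ identically and the edge contributes $0$.

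Now let $\tau_j>0$.

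\emph{Step 1 (pair marginal).} Since $\tilde D_j=X_{j+1}-X_j-t_j$ is a function of $(X_j,X_{j+1})$ alone, its expectation can be computed from the pair marginal \eqref{eq:3-bin-marginal} given by Proposition~\ref{prop:k-marginal}.

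\emph{Step 2 (law of total expectation).} Condition on $K_j$. Proposition~\ref{prop:2bin-factor} gives $\Pr(K_j=k)=\binom Nk\tau_j^k(1-\tau_j)^{N-k}$. It also gives $X_{j+1}\mid K_j=k\sim\Bin(k,q_j)$ whenever $\Pr(K_j=k)>0$. On the event $K_j=k$ we have $\tilde D_j=2X_{j+1}-k-t_j$, so the law of total expectation yields \eqref{eq:E-D-mixture}. One bookkeeping point: when $\tau_j=1$, only $k=N$ has positive weight. The other summands in \eqref{eq:E-D-mixture-closed} then carry weight $\binom Nk 1^k0^{N-k}=0$ (using $0^0=1$), so they may be written with any finite value of $\Lambda_j(k)$ without affecting the sum.

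\emph{Step 3 (inner expectation).} Apply Theorem~\ref{thm:F2-translated} with $Y\sim\Bin(k,q_j)$, shift $t=t_j$, and $c=c_k$, $m=m_k$. Since $k(q_j-\bar q_j)=k(2q_j-1)$, the theorem returns exactly $\Lambda_j(k)$, including the CDF conventions for $m_k<0$ and $m_k\ge k$.

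\emph{Check of $\Lambda_j(0)=|t_j|$.} Directly, $\E|2Y-0-t_j|=|t_j|$ because $Y=0$. From the formula, the $kq$ term is zero by convention and $c_0=t_j/2$. If $t_j\ge0$, then $m_0\ge0$ and $B(m_0;0,q_j)=1$, so $\Lambda_j(0)=-t_j+2t_j=t_j$. If $t_j<0$, then $m_0<0$ and $B(m_0;0,q_j)=0$, so $\Lambda_j(0)=-t_j$. Both cases give $|t_j|$.

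\emph{Step 4 (sum over edges).} Linearity of expectation applied to Definition~\ref{def:tvc} gives $\E(d_{\mathbf p})=\sum_j\E|\tilde D_j|$.

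The only step with real content is Step~3, and it is already done in Theorem~\ref{thm:F2-translated}. What remains to check there is that the boundary conventions, namely $k=0$ and $m_k$ outside $[0,k]$, agree with the direct evaluation; the check of $\Lambda_j(0)$ above covers this. I expect no genuine obstacle beyond this bookkeeping of edge cases.
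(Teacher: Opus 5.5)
Your proposal is correct and follows the paper's proof essentially step for step. You dispose of $\tau_j=0$ first, then use Proposition~\ref{prop:2bin-factor} for the outer $\Bin(N,\tau_j)$ weights and the conditional $\Bin(k,q_j)$ law, identify the inner expectation with Theorem~\ref{thm:F2-translated}, and finish by total expectation and linearity. Your extra bookkeeping is also correct: the zero-weight summands when $\tau_j=1$ and the direct check of $\Lambda_j(0)=|t_j|$ from the CDF conventions spell out what the paper leaves implicit.
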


\begin{proof}
	If $\tau_j=0$ then $X_j=X_{j+1}=0$ almost surely and $\tilde D_j=0$. Otherwise
	Proposition~\ref{prop:2bin-factor} gives $K_j\sim\Bin(N,\tau_j)$ --- the outer weights in
	\eqref{eq:E-D-mixture-closed} --- and $X_{j+1}\mid K_j=k\sim\Bin(k,q_j)$. On $\{K_j=k\}$ one
	has $\tilde D_j=2X_{j+1}-k-t_j$, so the inner conditional expectation is
	$\E|2Y-k-t_j|$ with $Y\sim\Bin(k,q_j)$, which is $\Lambda_j(k)$
	by Theorem~\ref{thm:F2-translated} (there $q=q_j$,
	$\bar q=1-q_j$, hence $q-\bar q=2q_j-1$). The law of total expectation \eqref{eq:E-D-mixture} and the
	linearity \eqref{eq:E-TVC-exact} finish the proof. No approximation enters at any step.
\end{proof}

	Each $\Lambda_j(k)$ costs two evaluations of the binomial CDF, so
	\eqref{eq:E-D-mixture-closed} is a single sum of length $N+1$; in practice the outer
	binomial weights are negligible outside a window of width $O(\sqrt{N\tau_j(1-\tau_j)})$
	about $N\tau_j$, and truncating there is a numerical acceleration, not part of the
	identity.

\section{Asymptotic expectation of the matched statistic $d_{\mathbf p}$}
	\label{sec:E-TVC-asymp}

	This section derives the large-$N$ expansion of the matched statistic $d_{\mathbf p}$ at fixed
	$n$. The exact finite-$N$ formulas of \S\ref{sssec:mixture-conditioning-general}
	are useful for computation, but the asymptotic analysis is more direct at the
	level of a single edge. We write $D_j$ as a sum of independent three-point
	increments and apply a weighted local Edgeworth expansion to the corresponding
	lattice law. A corner Euler--Maclaurin summation then separates the continuous Gaussian contribution
	from the lattice correction caused by the position of the kink of
	$|x-N\mu_j|$ relative to the integer lattice. The same
	$\sqrt{2\sigma^2/\pi}\,[1+\text{Edgeworth}/N+\text{oscillatory Bernoulli}]$ mechanism
	underlies the mean absolute deviation of the classical discrete
	laws~\cite{elezovic_mad,elezovic_madfamily}; the per-edge expansion below is its
	multinomial adjacent-difference instance.

	After centering, the deterministic order-$N$ contribution is removed and the
	leading scale is $\sqrt N$. The leading coefficient is the usual
	$\sqrt{2/\pi}$ factor times the standard deviation of the edge. The first correction
	has two components: a smooth Edgeworth term, determined by the third and fourth
	cumulants of one increment, and an oscillatory lattice term involving
	$B_2(\{N\mu_j\})$. The next correction is stated at the same time in the lemma below;
	its derivation, including the cumulants up to order six and the higher corner terms of
	the Euler--Maclaurin expansion, is given in \S\ref{sec:E-D1-detailed}.
	We shall use the Bernoulli polynomials of degrees $2$--$4$,
	\[
		B_2(t)=t^2-t+\tfrac16,\qquad
		B_3(t)=t^3-\tfrac32t^2+\tfrac12t,\qquad
		B_4(t)=t^4-2t^3+t^2-\tfrac1{30}.
	\]

\begin{lemma}[Per-edge expansion]
	\label{lem:E-D1-asymp-general}
	Fix an edge $j$ with $0<p_j+p_{j+1}<1$ (one of $p_j,p_{j+1}$ may vanish), and set
	$\mu_j = p_{j+1} - p_j$, $\tau_j = p_j + p_{j+1}$, $s_j^2 = \tau_j - \mu_j^2$,
	$\sigma_j^2 = N s_j^2$. Then, as $N \to \infty$,
	\begin{equation}\label{eq:E-D1-asymp-general}
		\E|\tilde D_j| = \sqrt{\frac{2}{\pi}}\,\sigma_j
		\Bigl[\,1 + \frac{c_j(N)}{N} + \frac{c_j^{(2)}(N)}{N^2} + O(N^{-3})\Bigr],
	\end{equation}
	where, with the standardized cumulants $\lambda_r=\lambda_{r,j}:=\kappa_{r,j}/s_j^r$ of the per-trial
	increment (the edge index is suppressed) and the periodic Bernoulli polynomials ($\{\,\cdot\,\}$ the fractional part),
	\begin{align}
		c_j(N) &= \frac{\lambda_3^2 - \lambda_4}{24} - \frac{B_2(\{N\mu_j\})}{2\,s_j^2},
		\label{eq:coefficients_cj}\\
		c_j^{(2)}(N) &= \frac{24\lambda_6 - 120\lambda_3\lambda_5 - 75\lambda_4^2
		+ 350\lambda_3^2\lambda_4 - 175\lambda_3^4}{5760} + \ell_j(N),\label{eq:cj2}\\
		\ell_j(N) &= \frac{5\lambda_3^2 - 3\lambda_4}{48\,s_j^2}\,B_2(\{N\mu_j\})
		- \frac{\lambda_3}{6\,s_j^3}\,B_3(\{N\mu_j\})
		+ \frac{1}{8\,s_j^4}\,B_4(\{N\mu_j\}).\label{eq:ell-j}
	\end{align}
	The cumulants of the per-trial increment are
	\begin{gather}
		\kappa_3 = \mu_j(1 - 3\tau_j + 2\mu_j^2),\qquad
		\kappa_4 = \tau_j - 3\tau_j^2 - 4\mu_j^2 + 12\tau_j\mu_j^2 - 6\mu_j^4,\notag\\
		\kappa_5 = \mu_j\bigl(24\mu_j^4 - 60\mu_j^2\tau_j + 20\mu_j^2 + 30\tau_j^2 - 15\tau_j + 1\bigr),
		\label{eq:kappa5}\\
		\kappa_6 = -120\mu_j^6 + 360\mu_j^4\tau_j - 120\mu_j^4 - 270\mu_j^2\tau_j^2
		+ 150\mu_j^2\tau_j - 16\mu_j^2 + 30\tau_j^3 - 15\tau_j^2 + \tau_j.\notag
	\end{gather}
\end{lemma}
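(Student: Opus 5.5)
We write $D_j=\sum_{k=1}^N\xi_k$ with $\xi_k$ i.i.d.\ three-point increments taking the values $+1$, $-1$ and $0$ with probabilities $p_{j+1}$, $p_j$ and $1-\tau_j$. Then $\E\xi=\mu_j$, $\Var\xi=s_j^2$, and $\tilde D_j=D_j-N\mu_j$. The cumulants \eqref{eq:kappa5} follow by expanding $\log(1-\tau_j+p_{j+1}e^{u}+p_je^{-u})$, shifting by $\mu_j u$ and reading off coefficients. We assume that $0<\tau_j<1$ makes the law of $\xi$ strongly aperiodic on $\Z$, so its characteristic function is bounded away from $1$ in modulus on $\delta\le|\theta|\le\pi$. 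The expectation is then the lattice sum
\[
\E|\tilde D_j|=\sum_{x\in\Z}|x-N\mu_j|\,\Pr(D_j=x).
\]
We split it into a central window $|x-N\mu_j|\le N^{1/2+\epsilon}$ and tails; Hoeffding's bound makes the tails $O(e^{-cN^{2\epsilon}})$.

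In the window we use a local Edgeworth expansion to order $N^{-5/2}$ relative error. Putting $z=(x-N\mu_j)/\sigma_j$, we write
\[
\Pr(D_j=x)=\sigma_j^{-1}\phi(z)\Bigl[1+\sum_{r=1}^{5}N^{-r/2}P_r(z)\Bigr]+O\bigl(N^{-3}(1+|z|)^{-M}\bigr),
\]
where the $P_r$ are the standard polynomials in the $\lambda$'s: $P_1=\tfrac{\lambda_3}{6}He_3$, $P_2=\tfrac{\lambda_4}{24}He_4+\tfrac{\lambda_3^2}{72}He_6$, and so on. The proof is the usual Fourier inversion on $[-\pi,\pi]$. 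The remainder has to carry the weight $|x-N\mu_j|$, so we want it polynomially decaying in $z$. For that we would prove the weighted (non-uniform) version by differentiating the characteristic-function difference in $\theta$, which is legitimate because every moment of $\xi$ is finite. Summed against the weight, the remainder contributes $O(\sigma_j\cdot N^{-3})$, as required.

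The main step, and the main obstacle, is summing the smooth function $f(x)=|x-N\mu_j|\,\sigma_j^{-1}\phi(z)[1+\dots]$ over $x\in\Z$. The function $f$ has a kink at the non-lattice point $a=N\mu_j$. We split the sum at $a$ and apply Euler--Maclaurin on each half-line, starting from the lattice points $\lceil a\rceil$ and $\lfloor a\rfloor$. Each one-sided sum equals the integral plus corner terms of the form $B_k(\{\cdot\})f^{(k-1)}(a^\pm)/k!$, and the remainder decays like Schwartz functions because $\phi$ does. The integrals give the continuous part, $\sigma_j\E|Z|\,[1+\dots]$ with $Z$ standard normal. The odd $P_r$ integrate to zero against $|z|\phi$ by parity, which kills the half-integer orders. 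The even $P_r$ give $\sqrt{2/\pi}\,\sigma_j$ times the smooth coefficients, $(\lambda_3^2-\lambda_4)/24$ and the sixth-order combination in \eqref{eq:cj2}. These come from Gaussian moments $\int|z|He_k\phi=\sqrt{2/\pi}\,He_{k-2}(0)\cdot(\text{sign})$.

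For the corner terms, write $g(x)=\sigma_j^{-1}\phi(z)[\dots]$, so that $f=|x-a|g$. At the kink, $f(a)=0$, $f'(a^\pm)=\pm g(a)$, and $f'''(a^\pm)=\pm3g''(a)$. The two sides add with matching signs, using $B_k(1-t)=(-1)^kB_k(t)$, so only the jump terms survive. They come out as $B_2(\{N\mu_j\})$ times $g(a)$, $B_3$ times $g'(a)$, and $B_4$ times $g''(a)$. Substituting $g(a)=\sigma_j^{-1}\phi(0)[1+N^{-1}P_2(0)+\dots]$, $g'(a)\propto\sigma_j^{-2}\lambda_3 N^{-1/2}$ and $g''(a)\propto-\sigma_j^{-3}$ gives the pieces of the result. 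The $N^{-1}$ term is $-B_2/(2s_j^2)$. The term $P_2(0)$ times $B_2$ produces the $(5\lambda_3^2-3\lambda_4)/(48s_j^2)$ part of \eqref{eq:ell-j}, because $P_2(0)=\lambda_4/8-5\lambda_3^2/24$. The $B_3$ term produces $\lambda_3/(6s_j^3)$, and the $B_4$ term produces $1/(8s_j^4)$.

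The delicate parts are getting the orientation and sign conventions of $\{N\mu_j\}$ consistent on the two sides, and checking that the Euler--Maclaurin remainder is $O(N^{-5/2})$ uniformly in the lattice position. The uniformity holds because all bounds involve only $\sup|B_k|$.
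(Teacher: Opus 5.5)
Your proposal is correct and follows essentially the paper's proof: a weighted local Edgeworth estimate to relative order $N^{-3}$ obtained by differentiating the Fourier remainder, then a corner Euler--Maclaurin summation at the off-lattice kink $N\mu_j$, with parity killing the half-integer orders. Your two one-sided expansions from $\lceil a\rceil$ and $\lfloor a\rfloor$, recombined via $B_k(1-t)=(-1)^kB_k(t)$, are equivalent to the paper's periodic-Bernoulli corner formula with jumps $[f^{(k-1)}]_a=2(k-1)g^{(k-2)}(a)$.

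Two points to tighten. First, the local remainder must carry the factor $\sigma_j^{-1}$, i.e.\ $O(\sigma_j^{-1}N^{-3}(1+|z|)^{-M})$, to give the weighted error $O(\sigma_jN^{-3})$ you claim. Second, ruling out an $N^{-5/2}$ term also needs the parity argument applied to the corner terms, and this is why the paper carries Euler--Maclaurin to $K=6$: for example, the $k=5$ corner against the Gaussian profile vanishes only because $\phi'''(0)=0$.
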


	The proof of Lemma~\ref{lem:E-D1-asymp-general} occupies the whole of
	\S\ref{sec:E-D1-detailed}. It starts from the weighted lattice sum
	\[
		\E|\tilde D_j|=\sum_{m\in\mathbb Z}|m-N\mu_j|\,\Pr(D_j=m),
	\]
	replaces the exact lattice mass by a weighted local Edgeworth approximation, and
	then applies a corner Euler--Maclaurin formula at the kink. The smooth part gives
	the Edgeworth corrections in $c_j(N)$ and $c_j^{(2)}(N)$, while the corner terms
	give the Bernoulli-polynomial lattice corrections.

	Through the order retained here (relative $N^{-3}$) the expansion proceeds in integer powers
	of $N^{-1}$: every half-integer order vanishes by parity (Lemma~\ref{lem:parity}), and the same
	parity argument applies at every order. In the smooth part the odd
	Edgeworth polynomials integrate to zero against the even weight $|z|\phi$; in the
	lattice part the corner term of index $k$, read against the Edgeworth profile of
	index $r$, sits at relative order $N^{-(k+r)/2}$ and carries an odd derivative of
	$\phi$ at its center whenever $k+r$ is odd. This is why the two displayed
	corrections are followed by $O(N^{-3})$ and not by $O(N^{-5/2})$; the same
	phenomenon makes the exceptional case $n=2$ an expansion in integer powers as
	well \cite[Theorem~4.2]{elezovic_mad}. The coefficients $c_j(N),c_j^{(2)}(N)$ are
	bounded but $N$-dependent through $\{N\mu_j\}$.

\begin{remark}[In what sense this is an asymptotic expansion]\label{rem:expansion-sense}
	The coefficients $c_j(N),c_j^{(2)}(N)$ depend on $N$ only through the fractional part
	$\{N\mu_j\}$, and only through the periodic Bernoulli functions $B_k(\{\cdot\})$, $k\ge2$, which
	are continuous; thus $c_j(N)=\Phi_{1,j}(N\mu_j)$ and $c_j^{(2)}(N)=\Phi_{2,j}(N\mu_j)$ for explicit
	continuous $1$-periodic functions $\Phi_{1,j},\Phi_{2,j}$, and \eqref{eq:E-D1-asymp-general} is an
	asymptotic expansion in the generalized sense of Erd\'elyi and Wyman with respect to the scale
	$\{N^{-k}\}$ --- bounded oscillating coefficient functions and a remainder uniform in the
	phase --- rather than in the Poincar\'e sense. The estimate is
	valid uniformly in the position of the kink relative to the lattice --- a uniformity the proof
	delivers (Lemma~\ref{lem:weighted-local-edgeworth} is uniform in $m$, Step~1 of
	\S\ref{sec:E-D1-detailed} is uniform in the grid shift, and the corner estimates do not depend
	on $N\mu_j$). The $N$-dependence follows a clean dichotomy: for rational $\mu_j=u/v$ the
	coefficients are periodic in $N$ with period dividing $v$; for irrational $\mu_j$ the sequence $\{N\mu_j\}$ equidistributes,
	the coefficients fill their range densely and never converge, and, since $\int_0^1B_k=0$, their
	Ces\`aro means are the smooth parts (for rational $\mu_j=u/v$ in lowest terms the period mean
	of $B_k(\{Nu/v\})$ is $v^{-k}B_k(0)$ instead, which is why the uniform case freezes at $-n/24$
	below). Oscillatory Bernoulli-polynomial corrections are a known feature of Edgeworth
	expansions for lattice distributions (Esseen~\cite{esseen1945}; for their absorption into
	Sheppard-corrected cumulants, Kolassa--McCullagh~\cite{kolassa_mccullagh1990}), and the kinked
	instance is already the binomial mean absolute deviation, whose first correction is
	Frame's~\cite{frame1945} and whose complete expansion is in~\cite{elezovic_mad} (other classical
	laws:~\cite{elezovic_madfamily}). What is specific here is the three-point increment of the edge
	difference, for which no collapse identity is available, together with its second-order
	coefficient $c_j^{(2)}(N)$, uniformly in the position of the kink $N\mu_j$ relative to the
	lattice.
\end{remark}

\begin{theorem}[Asymptotic $\E(d_{\mathbf p})$]\label{thm:E-TVC}
	Fix $n\ge3$, $p_{\min}>0$, and a probability vector
	$\mathbf p=(p_1,\ldots,p_n)$ with $\min_jp_j\ge p_{\min}$. Then, as $N \to \infty$,
	\begin{equation}\label{eq:E-TVC}
		\E(d_{\mathbf p}) = \sqrt{\frac{2N}{\pi}}\sum_{j=1}^{n-1} s_j
		\Bigl[\,1 + \frac{c_j(N)}{N} + \frac{c_j^{(2)}(N)}{N^2}\Bigr] + O(N^{-5/2}),
	\end{equation}
	with $c_j(N),c_j^{(2)}(N),s_j,\mu_j$ from Lemma~\ref{lem:E-D1-asymp-general}. The
	$O(N^{-5/2})$ is an \emph{absolute} remainder for the expectation, and its constant is
	uniform over the class
	$\{\mathbf p:\min_j p_j\ge p_{\min}\}$. In particular the
	leading term
	\begin{equation}\label{eq:E-TVC-leading}
		\E(d_{\mathbf p}) = \sqrt{\frac{2N}{\pi}}
		\sum_{j=1}^{n-1}\sqrt{(p_j + p_{j+1}) - (p_j - p_{j+1})^2} + O(N^{-1/2})
	\end{equation}
	holds uniformly over the same class.
\end{theorem}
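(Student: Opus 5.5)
The plan is to deduce Theorem~\ref{thm:E-TVC} from Lemma~\ref{lem:E-D1-asymp-general} using the linearity identity \eqref{eq:E-TVC-exact}, $\E(d_{\mathbf p})=\sum_{j=1}^{n-1}\E|\tilde D_j|$. The only new ingredient is uniformity of the per-edge remainder over the class $\{\mathbf p:\min_j p_j\ge p_{\min}\}$.

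\textbf{Step 1: applicability of the lemma.} Since $n\ge3$ and every coordinate is at least $p_{\min}$, each edge satisfies
\[
2p_{\min}\le\tau_j\le 1-(n-2)p_{\min}\le 1-p_{\min},
\]
so $0<\tau_j<1$ and the lemma applies to every edge. Its relative remainder $O(N^{-3})$, multiplied by $\sqrt{2/\pi}\,\sigma_j=\sqrt{2N/\pi}\,s_j$, becomes an absolute $O(N^{-5/2})$. Summing the $n-1$ edges (with $n$ fixed) gives \eqref{eq:E-TVC} for each fixed $\mathbf p$.

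\textbf{Step 2: uniformity of the constant.} This is the main obstacle, because the lemma is stated for a fixed edge. I would go back through the proof in \S\ref{sec:E-D1-detailed} and check that every constant depends on the per-trial increment only through:
\begin{itemize}
\item a lower bound on $s_j^2$,
\item upper bounds on finitely many absolute moments, which are automatic since the increment takes values in $\{-1,0,1\}$,
\item a bound keeping the characteristic function away from $1$ in modulus on $[\epsilon,\pi]$, where the increment is aperiodic on $\Z$.
\end{itemize}

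For the variance, use $s_j^2=\tau_j-\mu_j^2=(p_j+p_{j+1})-(p_{j+1}-p_j)^2=4p_jp_{j+1}+\tau_j(1-\tau_j)\ge 4p_{\min}^2$. The characteristic function of the increment is
\[
\psi(\theta)=(1-\tau_j)+p_{j+1}e^{i\theta}+p_je^{-i\theta}.
\]
Because all three masses are at least $p_{\min}$ and the support $\{-1,0,1\}$ has span $1$, $\sup_{\epsilon\le|\theta|\le\pi}|\psi(\theta)|\le 1-\eta(\epsilon,p_{\min})<1$ by compactness of the parameter set. The bounds in the weighted local Edgeworth Lemma~\ref{lem:weighted-local-edgeworth} and in the corner Euler--Maclaurin estimates are explicit in these quantities. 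Moreover, the corner estimates do not depend on the phase $N\mu_j$ (Remark~\ref{rem:expansion-sense}). So the per-edge remainder constant is uniform over the class. Compactness of $\{\min_j p_j\ge p_{\min}\}$ together with continuity of the constants in $\mathbf p$ would also suffice, provided the lemma's constant is shown to be locally bounded.

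\textbf{Step 3: the leading term.} For \eqref{eq:E-TVC-leading}, the coefficients $c_j(N)$ and $c_j^{(2)}(N)$ are uniformly bounded on the class. The $\lambda_r=\kappa_r/s_j^r$ are bounded because $s_j\ge 2p_{\min}$ and the $\kappa_r$ are polynomials in $(\tau_j,\mu_j)\in[0,1]^2$. The Bernoulli factors $B_k(\{\cdot\})$ are bounded. Hence the correction terms contribute $\sqrt N\cdot O(N^{-1})=O(N^{-1/2})$ uniformly. Substituting $s_j^2=(p_j+p_{j+1})-(p_j-p_{j+1})^2$ then gives the displayed formula.
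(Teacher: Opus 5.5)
Your proposal is correct and follows the paper's own proof. You apply the per-edge lemma on every edge, using $1-\tau_j\ge(n-2)p_{\min}>0$; you turn the relative $O(N^{-3})$ into an absolute $O(N^{-5/2})$ via $\sigma_j=\sqrt N\,s_j$; and you get uniformity from the compact-set clause of the weighted local Edgeworth estimate together with two-sided bounds on $s_j^2$. Your explicit check that $c_j(N)$ and $c_j^{(2)}(N)$ are uniformly bounded, via $s_j^2=4p_jp_{j+1}+\tau_j(1-\tau_j)\ge 4p_{\min}^2$, only spells out what the paper leaves implicit when it passes to the leading-term statement.
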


\begin{proof}
	The centered $\tilde D_j = D_j - N(p_{j+1}-p_j)$ has mean zero and variance
	$\Var(\tilde D_j)=N[(p_j+p_{j+1})-(p_j-p_{j+1})^2]$ (Lemma~\ref{lem:cov-Dj-multinomial}). Since
	$n\ge3$ and every bin has positive mass, each edge satisfies $p_j+p_{j+1}<1$, so
	Lemma~\ref{lem:E-D1-asymp-general} applies edge by edge.  The lower bound
	$\min_jp_j\ge p_{\min}$ also gives
	$1-p_j-p_{j+1}=\sum_{\ell\ne j,j+1}p_\ell\ge(n-2)p_{\min}>0$, so each edge
	triple $(p_j,p_{j+1},1-p_j-p_{j+1})$ stays in a compact subset of the open
	three-simplex.  As $\sigma_j=\sqrt N\,s_j$, summing the
	per-edge expansion gives \eqref{eq:E-TVC}, the edgewise relative remainder $O(N^{-3})$ becoming an
	absolute $O(N^{-5/2})$ after multiplication by $\sigma_j=O(\sqrt N)$ over the fixed number of
	edges. Retaining only the leading part gives \eqref{eq:E-TVC-leading}, whose uniformity over
	$\{\mathbf p:\min_jp_j\ge p_{\min}\}$ is inherited from the compact-uniformity clause of
	Lemma~\ref{lem:weighted-local-edgeworth}: every edge triple
	$(a,b,w)=(p_j,p_{j+1},1-p_j-p_{j+1})$ lies in a compact subset of the open simplex, Steps~1--5
	of \S\ref{sec:E-D1-detailed} use only upper and lower bounds on $s_j^2$, and the maximum over the
	$n-1$ edgewise constants preserves the order.
\end{proof}

	The expansion also holds, with a constant depending on $\mathbf p$, for any fixed $\mathbf p$
	with $p_j+p_{j+1}<1$ on every edge, empty bins allowed; an edge with $p_j=p_{j+1}=0$
	contributes $0$ exactly (Proposition~\ref{prop:E-TVC-exact}) and is omitted from the sum in
	\eqref{eq:E-TVC}. Only the uniformity clause needs
	$\min_jp_j\ge p_{\min}$. The coefficients scale like inverse powers of $s_j^2\le p_j+p_{j+1}$,
	so the expansion is effectively one in $1/(Ns_j^2)$ --- roughly the reciprocal of the expected
	count in the pair --- and is numerically useful when $N(p_j+p_{j+1})\gg1$ on every edge (on
	the edge $(0.02,0.05)$, with $s_j^2=0.069$, the coefficient $c_j^{(2)}(N)$ oscillates in
	$[-2.7,\,0.9]$, against $[-0.03,\,0.015]$ on the edge $(0.1,0.3)$ below). The first-order asymptotics
	$\E(d_{\mathbf p})\sim\sqrt{2N/\pi}\sum_js_j$ (and, under a bounded standardized mismatch, the
	folded-normal leading term of Theorem~\ref{thm:dXY-subleading}) also
	follow from the multinomial central limit theorem and uniform integrability
	($\sup_N\E\tilde D_j^2/N<\infty$); the $O(N^{-1/2})$ remainder of \eqref{eq:E-TVC-leading} and
	the two correction terms, with their lattice structure, are the content of
	Theorem~\ref{thm:E-TVC}.

	\paragraph{Boundary regimes (heuristic, out of scope)} The assumption
	$p_j \ge p_{\min} > 0$ is not technical: for $p_j$ going to zero with $N$, we leave the central-limit-theorem
	domain on the affected coordinates. The following classification lies outside the
	$p_{\min}>0$ theorems and is stated without proof, as a guide to what is excluded. If both
	probabilities on an edge are of order $1/N$, the edge enters a Poisson/Skellam regime and
	contributes $O(1)$ rather than $O(\sqrt N)$ (a Skellam edge is elementary and confirms this);
	if only one probability vanishes while its neighbor stays bounded away from zero, the edge
	variance remains of order $N$ and the $\sqrt N$ scale persists. Thus boundary behavior must
	be classified edge by edge; the $p_{\min}$ condition in Theorem~\ref{thm:E-TVC} excludes all
	such mixed regimes.

	\paragraph{The excluded case $n=2$} The case $n=2$ was excluded from
	Theorem~\ref{thm:E-TVC} for a different reason: its single edge has
	$\tau_1=p_1+p_2=1$, the increment of $D_1=X_2-X_1$ takes only the values
	$\pm1$, the walk has span~$2$, and Lemma~\ref{lem:E-D1-asymp-general} does not
	apply. The obstruction is only apparent --- the matched statistic collapses to
	a binomial mean absolute deviation about its mean, $d_{\mathbf p}=2|X_1-Np|$: the exact
	De~Moivre form is classical~\cite{diaconis_zabell}, and the complete expansion is
	\cite[Theorem~4.2]{elezovic_mad} (its first coefficient is Frame's~\cite{frame1945}). We
	quote only what the present series uses: with $p:=p_1$, $q:=1-p$, $s_1^2=4pq$ and
	$h_N:=\lceil Np\rceil-Np$,
	\begin{equation}\label{eq:E-TVC-n2}
		\E(d_{\mathbf p})=\sqrt{\frac{2N}{\pi}}\,s_1
		\Bigl[\,1+\frac{\tilde c_1(N)}N+O(N^{-2})\Bigr],
		\qquad
		\tilde c_1(N)=\frac1{12}-\frac{B_2(h_N)}{2pq},
	\end{equation}
	an expansion in \emph{integer} powers of $N^{-1}$ whose coefficients oscillate
	through the lattice defect of the mean
	\cite[Theorem~4.2]{elezovic_mad} (see also \cite[\S2]{elezovic_folded}).

	It is worth recording that this is not a separate theory. The coefficients of
	\eqref{eq:E-TVC-n2} are \emph{exactly} those of
	Lemma~\ref{lem:E-D1-asymp-general}, read at $\tau_j=1$ on a lattice of span
	$2$. Indeed, putting $\tau=1$ in the cumulants of that lemma gives
	$\kappa_3=-2\mu s^2$ and $\kappa_4=2s^2(3\mu^2-1)$, so $\lambda_3=-2\mu/s$,
	$\lambda_4=2(3\mu^2-1)/s^2$, and the smooth coefficient becomes
	\[
		\frac{\lambda_3^2-\lambda_4}{24}
		=\frac{4\mu^2-6\mu^2+2}{24\,s^2}=\frac{2(1-\mu^2)}{24\,s^2}=\frac1{12},
	\]
	the smooth constant of \eqref{eq:E-TVC-n2}. The lattice term
	$-B_2(\{N\mu_j\})/(2s_j^2)$, on the other hand, must be re-derived on the
	coarse lattice. There it becomes
	\[
		-\frac{2^2B_2(h_N)}{8pq}=-\frac{B_2(h_N)}{2pq},
	\]
	again \eqref{eq:E-TVC-n2}. The rule is
	\begin{equation}\label{eq:span2-rule}
		B_k(\{N\mu_j\})\ \longmapsto\ 2^k\,B_k(h_N),
	\end{equation}
	and it must be read as a \emph{re-derivation}, not as an identity between
	Bernoulli polynomials: one has $\{N\mu_1\}=\{2h_N\}$ exactly (since
	$N\mu_1=N-2Np$), whereas $B_k(\{2h\})\ne2^kB_k(h)$ in general. What
	\eqref{eq:span2-rule} records is that the edge moves in steps of $2$, so a
	corner Euler--Maclaurin summation over a lattice of spacing $d$ divides the
	leading integral by $d$ and multiplies the $k$-th corner term by $d^{k-1}$ ---
	whence a factor $d^k=2^k$ \emph{relative} to the leading term --- and that the
	fractional position of the kink must be measured on the coarse lattice on which
	the kink actually sits, namely by $h_N=\lceil Np\rceil-Np$, the defect of $Np$
	on the span-one lattice of $X_1$. The exceptional case is therefore the same
	expansion with the parity obstruction absorbed into the span, which is also why
	it runs, like the general one, in integer powers of $N^{-1}$
	(Lemma~\ref{lem:parity}).

\begin{remark}[The Gaussian backbone]\label{rem:gaussian-backbone}
	The lattice corrections of \eqref{eq:coefficients_cj} and \eqref{eq:ell-j} share
	one structure. Setting $\lambda_r=0$ for all $r\ge3$ in \eqref{eq:edgeworth-pseudo} --- a
	Gaussian \emph{profile}; no lattice increment is Gaussian --- the odd
	derivatives in \eqref{eq:corner-EM} vanish and
	$f_N^{(2i-2)}(x_0) = (-1)^{i-1}(2i-3)!!\,\phi(0)/\sigma_j^{2i-1}$ (the derivative entering the $k=2i$ corner), so only the even $k=2i$
	survive; using $(2i-1)(2i-3)!!=(2i-1)!!$ (with $(-1)!!:=1$) and $(2i)!=2^i i!\,(2i-1)!!$, the
	lattice correction collapses to the single Euler--Maclaurin series of the Gaussian lattice sum
	$\sum_m|m-x_0|\,\sigma_j^{-1}\phi\bigl((m-x_0)/\sigma_j\bigr)$ (a theta-type sum)
	\begin{equation}\label{eq:theta-backbone}
		\sum_{i\ge1}\frac{(-1)^i}{2^i\,i!}\,
		\frac{B_{2i}(\{N\mu_j\})}{s_j^{2i}\,N^i}
		= -\frac{B_2(\{N\mu_j\})}{2s_j^2\,N}
		+ \frac{B_4(\{N\mu_j\})}{8s_j^4\,N^2} - \cdots,
	\end{equation}
	the folded standard Gaussian read on the lattice shifted by $\{N\mu_j\}$. The
	leading lattice term $-B_2/(2s_j^2)$ of $c_j$ is its $i=1$ member,
	the $B_4$-term of \eqref{eq:ell-j} its $i=2$ member, and the
	$\lambda_3,\lambda_4$ pieces of \eqref{eq:ell-j} are the first Edgeworth
	corrections to it. This is why the off-lattice kink of
	Remark~\ref{rem:expansion-sense} produces a whole family of oscillatory terms
	rather than one. Two qualifications: the collapse to the single series
	\eqref{eq:theta-backbone} concerns the $\lambda$-independent part of the coefficients, and the
	series is asymptotic, not convergent --- $\sup_t|B_{2i}(t)|\sim2(2i)!/(2\pi)^{2i}$ and
	$|B_{2i}(t)|$ is of factorial size for every $t$, so
	\eqref{eq:theta-backbone} diverges for every fixed $N$ and is to be read as a formal
	generating pattern for the lattice coefficients.
\end{remark}

	\paragraph{Check: the uniform reduction} The general per-edge coefficient specializes
	to the known uniform one. For $p_j=p_{j+1}=1/n$ with $n\ge3$: $\mu_j=0$,
	$\tau_j=s_j^2=2/n$, $\kappa_3=0$, and $\kappa_4=2/n-3(2/n)^2$, so
	$\lambda_3=0$, $\lambda_4=(n-6)/2$, and the smooth part of $c_j$ is
	$(\lambda_3^2-\lambda_4)/24=-(n-6)/48$. Since $\mu_j=0$ we have $\{N\mu_j\}=0$ and
	$B_2(0)=1/6$, so the lattice part is $-B_2(0)/(2s_j^2)=-(1/6)/(4/n)=-n/24$. Their sum is
	\[
		-\frac{n-6}{48}-\frac n{24}=-\frac{3n-6}{48}=-\frac{n-2}{16},
	\]
	the uniform per-edge coefficient $e_1$ of \cite[Corollary~5.6]{banic_elezovic_p1} (stated
	there for the cyclic statistic; under the uniform law all edges have the same law). Note
	that the lattice term does \emph{not} vanish in the uniform case; it freezes at the
	fixed value $-n/24$. What disappears under uniformity is its \emph{oscillation} with
	$N$, which is why the uniform theory of Part~1 never had to name it. At the next order, with
	$\lambda_6=\tfrac{n^2}4-\tfrac{15n}2+30$ and $B_4(0)=-\tfrac1{30}$, one finds
	$c_j^{(2)}=-(3n^2-12n-4)/512$, which is $e_2$ of \cite[Corollary~5.6]{banic_elezovic_p1} --- the
	strongest available independent check of the second-order coefficient.

	\paragraph{A concrete non-uniform edge}
	The edge $(p_j,p_{j+1})=(0.1,0.3)$ will serve as a running example. Here
	$\mu_j=0.2$, $\tau_j=0.4$, $s_j^2=0.36$, and
	\[
		\kappa_3=0.2\,(1-1.2+0.08)=-0.024,\qquad \kappa_4=-0.0576,
	\]
	so $\lambda_3=-\tfrac19$, $\lambda_4=-\tfrac49$, and the smooth coefficient is
	$(\lambda_3^2-\lambda_4)/24=37/1944\approx0.01903$. The full coefficient
	$c_j(N)=37/1944-B_2(\{N/5\})/0.72$ is periodic in $N$ with period $5$; at
	$N\equiv0\pmod5$ it equals $37/1944-(1/6)/0.72\approx-0.21245$. This displays the genuine
	$N$-dependence of Remark~\ref{rem:expansion-sense} in its simplest form --- here as exact
	periodicity, because $\mu_j$ is rational.

	\paragraph{Numerical validation} Table~\ref{tab:E-TVC-leading} compares successive
	truncations of \eqref{eq:E-TVC} --- the leading term \eqref{eq:E-TVC-leading}, the
	$O(N^{-1})$ correction, and the full displayed expansion including
	$c_j^{(2)}(N)/N^2$ --- with exact values computed from the finite mixture, for two
	non-uniform reference laws on $n=10$ bins: the \emph{comb}~\cite{banic_elezovic_ct}
	\[
		p_k=\frac1n+\frac{(-1)^k}{2n}\qquad(k=1,\dots,n),
	\]
	alternating between $0.5/n$ and $1.5/n$; and the \emph{linear staircase}
	\[
		p_k=\frac1n+\Bigl(k-\frac{n+1}2\Bigr)\varepsilon,\qquad \varepsilon=0.006,
	\]
	rising from $0.73/n$ to $1.27/n$ (both sum to $1$). The comb has a large per-edge gradient
	$\mu_j=\pm1/n$ of constant magnitude: every $N\equiv0\pmod{10}$ sits at the lattice phase
	$\{N\mu_j\}=0$, and off that residue class the phase moves and the leading error can change
	sign (rows $N=33,105,303,1003$, phases $0.3$ and $0.5$). The staircase has a small constant gradient $\mu_j=\varepsilon$, whose
	phase drifts slowly with $N$ and makes the oscillation visible as the non-monotone errors in
	its rows.
	\begin{table}[ht]
	\centering\small
	\begin{tabular}{l|r|r|r|r|r}
		distribution & $N$ & exact $\E(d_{\mathbf p})$ & err.\ of \eqref{eq:E-TVC-leading} & with $c_j/N$ & with $c_j^{(2)}/N^2$ \\\hline
		comb       & 30   & $16.853$  & $+1.73\%$  & $+0.027\%$  & $+0.00003\%$ \\
		& 100  & $31.144$  & $+0.51\%$  & $+0.002\%$  & $+0.000008\%$ \\
		& 300  & $54.124$  & $+0.17\%$  & $+0.0003\%$ & $<10^{-6}\%$ \\
		& 1000 & $98.933$  & $+0.050\%$ & $<10^{-4}\%$ & $<10^{-7}\%$ \\
		& 33   & $18.009$  & $-0.15\%$  & $+0.0033\%$ & $+0.00024\%$ \\
		& 105  & $32.122$  & $-0.15\%$  & $-0.0008\%$ & $-0.000025\%$ \\
		& 303  & $54.495$  & $-0.017\%$ & $+0.00004\%$ & $+3\cdot10^{-7}\%$ \\
		& 1003 & $99.136$  & $-0.0051\%$ & $+3\cdot10^{-6}\%$ & $+8\cdot10^{-9}\%$ \\\hline
		staircase  & 30   & $17.452$  & $+0.47\%$  & $+0.025\%$  & $+0.0025\%$ \\
		& 100  & $32.046$  & $-0.10\%$  & $+0.0001\%$ & $+0.000005\%$ \\
		& 300  & $55.430$  & $+0.034\%$ & $+0.0002\%$ & $+0.000002\%$ \\
		& 1000 & $101.185$ & $+0.051\%$ & $<10^{-4}\%$ & $<10^{-6}\%$ \\
	\end{tabular}
	\caption{Successive truncations of \eqref{eq:E-TVC} --- leading term
	\eqref{eq:E-TVC-leading}, the $O(N^{-1})$ correction, and the $O(N^{-2})$ correction
	--- against the exact $\E(d_{\mathbf p})$, computed from the exact mixture; comb (at the lattice
	phase $\{N\mu_j\}=0$, $N\equiv0\pmod{10}$, and off it) and linear staircase at $n=10$.}
	\label{tab:E-TVC-leading}
	\end{table}

% =====================================================================
\section{Detailed proof of the per-edge expansion}
	\label{sec:E-D1-detailed}

	This section proves Lemma~\ref{lem:E-D1-asymp-general} in full. The argument is
	laid out as a sequence of explicitly stated steps rather than compressed into a single
	proof environment, because it combines several distinct mechanisms: a weighted local
	Edgeworth estimate, a corner Euler--Maclaurin summation, and the distributional analysis of a kink.
	The weighted local estimate is standard in method, but the particular form needed
	here is stated and proved below for the present three-point increment law.

	\paragraph{Notation for this section} Fix the edge $j$ and suppress the index
	throughout. Write
	\begin{gather*}
		a := p_j,\quad b := p_{j+1},\quad w := 1 - a - b,\quad
		\mu := \mu_j = b - a,\quad \tau := a + b,\\
		s^2 := s_j^2 = \tau - \mu^2,\qquad \sigma^2 := \sigma_j^2 = N s^2,\\
		D_N := D_j = X_{j+1} - X_j,\qquad \widetilde D_N := \widetilde D_j = D_N - N\mu.
	\end{gather*}
	The hypotheses of the lemma are $a,b\ge0$, $a+b>0$ and $a+b<1$, i.e.\ $w>0$; if one of $a,b$
	vanishes the increment lives on $\{0,1\}$ or $\{-1,0\}$, and everything below holds verbatim (the
	span is still $1$, since $w>0$ and the other weight is positive).
	Each of the $N$ independent multinomial trials contributes an increment
	$\xi \in \{-1, 0, +1\}$ with
	\[
		\Pr(\xi = -1) = a,\qquad \Pr(\xi = +1) = b,\qquad \Pr(\xi = 0) = w,
	\]
	and $D_N = \sum_{t=1}^N \xi_t$. The quantity to expand is exactly a weighted lattice
	sum,
	\begin{equation}\label{eq:exp-as-sum}
		\E|\widetilde D_N| = \sum_{m\in\mathbb Z} |m - N\mu|\,\Pr(D_N = m),
	\end{equation}
	and the whole proof is the asymptotic evaluation of this sum.

\subsection{Moments and cumulants of one increment}

	Since $(\pm1)^{2\ell} = 1$, $(\pm1)^{2\ell+1} = \pm1$, and $0^k = 0$ for $k\ge1$, the raw moments
	are $\E(\xi^{2\ell})=a+b=\tau$ for every $\ell\ge1$ and $\E(\xi^{2\ell+1})=b-a=\mu$ for every
	$\ell\ge0$; in particular
	\[
		m_1=\mu,\quad m_2=\tau,\quad m_3=\mu,\quad m_4=\tau,\quad
		m_5=\mu,\quad m_6=\tau.
	\]
	Hence $\Var(\xi) = \tau - \mu^2 = s^2$, and by independence
	$\Var(D_N) = N s^2 = \sigma^2$, strictly positive because $\xi$ is
	non-degenerate ($w>0$ and $a+b>0$). The third cumulant equals the third central moment,
	\[
		\kappa_3 = \E(\xi-\mu)^3 = m_3 - 3\mu m_2 + 2\mu^3 = \mu(1 - 3\tau + 2\mu^2),
	\]
	and the fourth central moment is
	\[
		\E(\xi-\mu)^4
		=m_4-4\mu m_3+6\mu^2m_2-4\mu^3m_1+\mu^4
		=\tau-4\mu^2+6\tau\mu^2-3\mu^4.
	\]
	The fourth cumulant is $\kappa_4 = \E(\xi-\mu)^4 - 3 s^4$, so
	\[
		\kappa_4 = (\tau - 4\mu^2 + 6\tau\mu^2 - 3\mu^4) - 3(\tau-\mu^2)^2
		= \tau - 3\tau^2 - 4\mu^2 + 12\tau\mu^2 - 6\mu^4,
	\]
	as quoted in Lemma~\ref{lem:E-D1-asymp-general}.

	For the second correction in the Edgeworth expansion we also need the fifth and sixth
	cumulants. A convenient way to compute them is to expand the cumulant generating
	function
	\[
		K(t)=\log \E e^{t\xi}
		=\log\bigl(ae^{-t}+w+be^t\bigr)
		=\sum_{r\ge1}\kappa_r\,\frac{t^r}{r!}.
	\]
	Inserting the raw-moment expansion
	\[
		\E e^{t\xi}
		=1+\mu t+\frac{\tau t^2}{2}+\frac{\mu t^3}{6}
		+\frac{\tau t^4}{24}+\frac{\mu t^5}{120}
		+\frac{\tau t^6}{720}+O(t^7)
	\]
	and expanding $\log(1+u)$ through $t^6$ gives
	\begin{align*}
		\kappa_5
		&=\mu\bigl(24\mu^4-60\mu^2\tau+20\mu^2+30\tau^2-15\tau+1\bigr),\\
		\kappa_6
		&=-120\mu^6+360\mu^4\tau-120\mu^4-270\mu^2\tau^2
		+150\mu^2\tau-16\mu^2+30\tau^3-15\tau^2+\tau.
	\end{align*}
	Cumulants add over sums of independent variables, so
	$\kappa_\ell(D_N) = N\kappa_\ell(\xi)$; standardizing by $\sigma = s\sqrt N$,
	\begin{align*}
		\frac{\kappa_3(D_N)}{\sigma^3} &= \frac{\lambda_3}{\sqrt N},&
		\frac{\kappa_4(D_N)}{\sigma^4} &= \frac{\lambda_4}{N},\\
		\frac{\kappa_5(D_N)}{\sigma^5} &= \frac{\lambda_5}{N^{3/2}},&
		\frac{\kappa_6(D_N)}{\sigma^6} &= \frac{\lambda_6}{N^2},
		\qquad \lambda_r := \frac{\kappa_r}{s^r}.
	\end{align*}
	Thus the direct cumulant contributions for $r=3,4,5,6$ have orders
	$N^{-1/2}$, $N^{-1}$, $N^{-3/2}$, and $N^{-2}$. The products of lower cumulants
	that appear in $P_2,P_3,P_4$ below come from exponentiating the logarithmic
	characteristic-function expansion.

\subsection{Span, aperiodicity, and the Edgeworth approximant}

	The increment is supported on $\{-1, 0, 1\}$, whose differences generate $\mathbb Z$,
	so its maximal lattice span is $1$. Equivalently, the per-trial characteristic
	function $\psi(\theta)=ae^{-i\theta}+w+be^{i\theta}$ is a convex combination, with positive
	weights $w$ and at least one of $a,b$, of the unit vectors $e^{-i\theta},1,e^{i\theta}$, so
	$|\psi(\theta)|=1$ forces those with positive weight to coincide, i.e.\ $\theta\in2\pi\mathbb Z$. This is
	aperiodicity, and it is exactly where $w > 0$ enters: if $w = 0$ the walk uses only
	$\pm1$ and $D_N$ carries the parity of $N$, a span-$2$ phenomenon.

	Let $\phi(z) = (2\pi)^{-1/2} e^{-z^2/2}$ and let $H_k$ denote the probabilists' Hermite
	polynomials, fixed by $\phi^{(k)}=(-1)^k H_k\phi$; the low ones used below are
	$H_3=z^3-3z$, $H_4=z^4-6z^2+3$, $H_6=z^6-15z^4+45z^2-15$, and $H_5,\dots,H_{15}$ are the
	standard higher members. The one-dimensional Edgeworth inversion of the cumulant expansion of
	$S_N = (D_N - N\mu)/\sigma$ gives, through relative order $N^{-5/2}$,
	\begin{equation}\label{eq:edgeworth-pseudo}
		f_N(x) = \frac1\sigma\,\phi(z)\Bigl[1+\sum_{r=1}^{5}N^{-r/2}P_r(z)\Bigr],
		\qquad z = \frac{x - N\mu}{\sigma},
	\end{equation}
	where each $P_r$ is a polynomial in $z$ of parity $(-1)^r$ whose coefficients are polynomials in
	the standardized cumulants $\lambda_3,\lambda_4,\dots$ ($\lambda_m:=\kappa_m/s^m$; $\kappa_5,\kappa_6$ from \eqref{eq:kappa5}):
	\begin{gather*}
		P_1=\tfrac{\lambda_3}6 H_3,\qquad
		P_2=\tfrac{\lambda_4}{24}H_4+\tfrac{\lambda_3^2}{72}H_6,\qquad
		P_3=\tfrac{\lambda_5}{120}H_5+\tfrac{\lambda_3\lambda_4}{144}H_7+\tfrac{\lambda_3^3}{1296}H_9,\\
		P_4=\tfrac{\lambda_6}{720}H_6+\Bigl(\tfrac{\lambda_3\lambda_5}{720}+\tfrac{\lambda_4^2}{1152}\Bigr)H_8
		+\tfrac{\lambda_3^2\lambda_4}{1728}H_{10}+\tfrac{\lambda_3^4}{31104}H_{12},
	\end{gather*}
	and $P_5$ is the corresponding \emph{odd} polynomial in the cumulants
	$\lambda_3,\dots,\lambda_7$, a combination of the Hermite polynomials of odd degree
	$7$ through $15$. Its explicit coefficients are never needed:
	$P_5$ is carried only so that the local estimate below reaches relative order $N^{-3}$; its
	smooth $N^{-5/2}$ mode vanishes by oddness and its corner terms start at $N^{-7/2}$, so it does
	not reach the retained order (Step~5).

	$P_1,P_2$ come from the third and fourth cumulants (the $H_6$ term from the square of the
	third-cumulant contribution when the logarithmic expansion is exponentiated), $P_3,P_4$ from
	the fifth and sixth, $P_5$ from the seventh; these are the standard Edgeworth polynomials in the
	cumulant-to-Hermite normalization of Petrov~\cite[Chs.~VI--VII]{petrov1975} (equivalently
	Bhattacharya--Rao~\cite{bhattacharya_rao}), obtained by grouping the terms of
	$\exp\bigl(\sum_{r\ge3}\kappa_r(it)^r/r!\bigr)$ by total order in $N^{-1/2}$. Under Fourier
	inversion the factors $(it)^k e^{-t^2/2}$ become $H_k(z)\phi(z)$. Every $P_r$ is a
	combination of Hermite polynomials $H_m$ with $m\equiv r\pmod 2$, which is the source of the
	parity $(-1)^r$ used repeatedly below. The function $f_N$ is
	not a density --- $D_N$ is a lattice variable --- but a smooth function whose values at integers
	approximate the lattice probabilities $\Pr(D_N=m)$.

\begin{lemma}[Weighted local Edgeworth input]\label{lem:weighted-local-edgeworth}
	For the fixed three-point increment law above, with $a,b\ge0$, $a+b>0$, $w>0$ (hence maximal
	span $1$), the Edgeworth approximant \eqref{eq:edgeworth-pseudo} satisfies
	\begin{equation}\label{eq:weighted-local-detailed}
		\sup_{m\in\mathbb Z}\,(1 + |z_m|^6)\,\sigma\,
		\bigl|\Pr(D_N = m) - f_N(m)\bigr| = O(N^{-3}),
		\quad z_m := \frac{m - N\mu}{\sigma}.
	\end{equation}
	The implied constant may depend on the fixed edge probabilities $(a,b,w)$ but is
	uniform in the lattice point $m$. If $(a,b,w)$ ranges over a compact subset of
	$\{a,b\ge0,\ a+b>0,\ w>0,\ a+b+w=1\}$, the same estimate is uniform over that compact set.
\end{lemma}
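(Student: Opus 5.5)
The plan is to prove \eqref{eq:weighted-local-detailed} by Fourier inversion on the lattice, following the standard route for local Edgeworth expansions (Petrov, Ch.~VII; Bhattacharya--Rao, \S22). The weight $|z_m|^6$ is absorbed by differentiating in the frequency variable.

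Write $\psi(\theta)=ae^{-i\theta}+w+be^{i\theta}$. Since $D_N$ has span $1$,
\[
	\Pr(D_N=m)=\frac1{2\pi}\int_{-\pi}^{\pi}\psi(\theta)^N e^{-im\theta}\,d\theta .
\]
Setting $\theta=t/\sigma$ and centering gives
\[
	\sigma\Pr(D_N=m)=\frac1{2\pi}\int_{|t|\le\pi\sigma}\chi_N(t)e^{-itz_m}\,dt,
	\qquad \chi_N(t):=e^{-itN\mu/\sigma}\psi(t/\sigma)^N .
\]
Correspondingly, $\sigma f_N(m)=\frac1{2\pi}\int_{\mathbb R}\hat g_N(t)e^{-itz_m}\,dt$, where $\hat g_N(t)=e^{-t^2/2}\bigl[1+\sum_{r=1}^5N^{-r/2}\tilde P_r(it)\bigr]$ and $\tilde P_r$ are the polynomials whose inverse transforms are $P_r\phi$.

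To produce the weight, use the identity $z^ke^{-itz}=(i\,d/dt)^k e^{-itz}$ and integrate by parts. Then $z_m^k\,\sigma[\Pr(D_N=m)-f_N(m)]$ for $k=0$ and $k=6$ equals $(2\pi)^{-1}$ times the integral of $e^{-itz_m}$ against $(-i\,d/dt)^k$ of the difference of the transforms. There are boundary terms at $t=\pm\pi\sigma$. Because $\psi$ is $2\pi$-periodic and $N\mu/\sigma\cdot\pi\sigma=\pi N\mu$, these terms do not cancel exactly. Instead I will bound them using the exponential smallness of $\chi_N$ and its derivatives at $|t|=\pi\sigma$. Alternatively, I can avoid them entirely by writing the weight as $(m-N\mu)^6$ and using periodicity: multiplying by $(m-N\mu)^6$ corresponds to applying $(i\,d/d\theta+N\mu)^6$ to $e^{-iN\mu\theta}\psi^N$ times $e^{i(N\mu-m)\theta}$. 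The function $\psi^N$ is genuinely $2\pi$-periodic, so integrating by parts against $e^{-im\theta}$ leaves no boundary terms. I will take this second route. The claim then reduces to showing that
\[
	\int\Bigl|\frac{d^k}{dt^k}\bigl(\chi_N(t)\mathbf 1_{|t|\le\pi\sigma}-\hat g_N(t)\bigr)\Bigr|\,dt=O(N^{-3}),\qquad k=0,\dots,6,
\]
with the derivative inside the indicator understood via the periodic formulation.

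Split the integral into three ranges. \textbf{(i)} On $|t|\le\varepsilon\sqrt N$, Taylor-expand $\log\chi_N(t)=-t^2/2+\sum_{r=3}^{8}N^{1-r/2}\kappa_r(it)^r/(r!s^r)+O(N^{-3}|t|^9)$. Exponentiating and truncating at order $N^{-5/2}$ gives the standard bound
\[
	\bigl|\tfrac{d^k}{dt^k}(\chi_N-\hat g_N)\bigr|\le CN^{-3}(1+|t|^{M})e^{-t^2/4}
\]
for $k\le6$. This follows by applying Cauchy estimates on a small complex disc around each $t$, where the same expansion holds, so derivatives cost nothing. \textbf{(ii)} On $\varepsilon\sqrt N\le|t|\le\pi\sigma$, the estimate uses aperiodicity. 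Since $w>0$ and $a+b>0$, we have $|\psi(\theta)|\le1-c(\varepsilon)$ for $\varepsilon'\le|\theta|\le\pi$, so $|\chi_N|\le e^{-cN}$. Each derivative of $\psi^N$ brings down at most a factor $N^k$, which the exponential absorbs. \textbf{(iii)} On $|t|\ge\varepsilon\sqrt N$, the Gaussian tail of $\hat g_N$ and its derivatives is $O(e^{-c N})$.

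Finally, combine the $k=0$ and $k=6$ bounds. Every constant depends on $(a,b,w)$ only through $s^2$, the cumulants, and $\sup_{\varepsilon'\le|\theta|\le\pi}|\psi(\theta)|<1$. All three are continuous in $(a,b,w)$, and the last stays below $1$ on compacts of the stated set. This gives the asserted uniformity.

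The main obstacle is step (i): controlling six $t$-derivatives of the remainder uniformly in the full moderate range $|t|\le\varepsilon\sqrt N$. For this I would use the complex-disc/Cauchy-estimate device rather than differentiating the Taylor expansion term by term.
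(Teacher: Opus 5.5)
Your proposal is correct. It shares the paper's skeleton: Fourier inversion on $\mathbb Z$, the weight produced by six integrations by parts, reduction to an $L^1$ bound $O(N^{-3})$ on the transform remainder and its sixth derivative, and aperiodicity on the far frequencies. It differs in two technical choices.

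\emph{The endpoint terms.} The paper integrates by parts against the non-periodic factor $\widehat\psi(t/\sigma)^N$ on $[-\pi\sigma,\pi\sigma]$. It bounds the endpoint terms $B_N(m)$ by $CN^5\sigma^{-5}(1+|z_m|^5)\,|1-2(a+b)|^{N-5}$. That bound forces a separate step (its Step~2) discarding the lattice points $|m|>2N$, so that $|z_m|^5=O(N^{5/2})$. Your periodic formulation, with $(-i\,d/d\theta-N\mu)^6$ acting on $\psi^N$ against $e^{-im\theta}$, produces no endpoint terms and treats every $m\in\mathbb Z$ at once, so both steps disappear.

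Your side remark that the endpoint terms of the first route ``do not cancel exactly'' is mistaken. Since $\widehat\psi(t/\sigma)^N e^{-iz_m t}=\psi(t/\sigma)^N e^{-imt/\sigma}$, each endpoint product $U^{(j)}\,\frac{d^{5-j}}{dt^{5-j}}e^{-iz_m t}$ is $e^{-imt/\sigma}$ times a $2\pi\sigma$-periodic function. Its values at $\pm\pi\sigma$ therefore coincide, and $B_N(m)=0$ identically. The paper's estimate is valid but unnecessary, and your second route is exactly this observation made explicit.

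\emph{The frequency split.} You compare the lattice transform with the Edgeworth transform on the whole moderate range $|t|\le\varepsilon\sqrt N$. That needs the full Bhattacharya--Rao type truncation bound with a Gaussian envelope on this range, plus your complex-disc Cauchy estimates for the six derivatives. The price is a harmlessly weaker envelope, e.g.\ $e^{-t^2/5}$ instead of $e^{-t^2/4}$. The paper expands only on $|t|\le N^{1/8}$, where the non-Gaussian exponent is $O(N^{-1/8})$ and the ordinary exponential series with Fa\`a di Bruno suffices. On $N^{1/8}<|t|\le\delta\sigma$ it bounds the two transforms separately by their common Gaussian envelope, so no comparison is needed there.

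Your version is cleaner at the cutoff and for large $|m|$; the paper's is more elementary on the Fourier side. Either yields the uniformity over compact sets of $(a,b,w)$ in the same way.
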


	For a fixed law the estimate is a special case of the lattice local expansion with polynomial
	weight (Bhattacharya--Rao~\cite[Ch.~5, \S22]{bhattacharya_rao}; unweighted:
	Petrov~\cite[Ch.~VII]{petrov1975}); we give the proof because the uniformity over compact sets
	of $(a,b,w)$, needed in Theorem~\ref{thm:E-TVC}, is not stated there.

\begin{proof}
	We give the standard Fourier proof in the form needed here; for the general theory of
	weighted local Edgeworth expansions see Bhattacharya--Rao~\cite{bhattacharya_rao}. The
	point worth recording is the polynomially weighted form: the later absolute-value
	sum uses the estimate after multiplication by \(1+|z|^6\), so the Fourier
	remainder has to be controlled together with six derivatives. The derivative
	estimates this requires are the content of Region~A below, and are stated there in
	full. The
	centered per-trial characteristic function is
	$\widehat\psi(\theta)=\E e^{i\theta(\xi-\mu)}=e^{-i\mu\theta}\psi(\theta)$, with
	$\psi$ as above and $\widehat\psi(0)=1$. The centered factor need not be
	$2\pi$-periodic unless $\mu\in\mathbb Z$, but $|\widehat\psi|=|\psi|$, so the
	span-$1$ aperiodicity established above gives
	$|\widehat\psi(\theta)|<1$ for $0<|\theta|\le\pi$. Since the law of the increment is
	fixed, all constants below may depend on $(a,b,w)$, but not on $N$ or on the lattice
	point $m$.

	\emph{Step 1 (inversion).} Fourier inversion on $\mathbb Z$ and the substitution
	$\theta=t/\sigma$ give
	\begin{equation}\label{eq:wle-inv}
		\sigma\,\Pr(D_N=m)=\frac1{2\pi}\int_{-\pi\sigma}^{\pi\sigma}\widehat\psi(t/\sigma)^N e^{-iz_m t}\,dt,
		\qquad
		\sigma f_N(m)=\frac1{2\pi}\int_{\mathbb R}\chi(t)\,e^{-iz_m t}\,dt,
	\end{equation}
	where, using $\tfrac1{2\pi}\int_{\mathbb R}(it)^k e^{-t^2/2}e^{-izt}\,dt=H_k(z)\phi(z)$, the
	function $\chi$ is the Fourier-side form of the approximant \eqref{eq:edgeworth-pseudo}: through
	relative order $N^{-2}$,
	\begin{equation}\label{eq:wle-chi}
		\chi(t)=e^{-t^2/2}\Bigl[1+\tfrac{\lambda_3}{6\sqrt N}(it)^3
		+\tfrac1N\bigl(\tfrac{\lambda_4}{24}(it)^4+\tfrac{\lambda_3^2}{72}(it)^6\bigr)+\cdots\Bigr],
	\end{equation}
	the $N^{-3/2}$, $N^{-2}$ and $N^{-5/2}$ terms carrying $P_3,P_4,P_5$ under
	$H_k(z)\mapsto(it)^k$. Write
	$U(t)=\widehat\psi(t/\sigma)^N\mathbf 1_{\{|t|\le\pi\sigma\}}$ and $R=U-\chi$, so
	\[
		\sigma(\Pr(D_N=m)-f_N(m))=\frac1{2\pi}\int_{\mathbb R}R(t)e^{-iz_m t}\,dt.
	\]

	\emph{Step 2 (far lattice points).} Because $D_N$ is supported on $[-N,N]$ and
	$|N\mu|\le N$, every $m$ with $|m|>2N$ has $\Pr(D_N=m)=0$ and
	$|z_m|\ge N/\sigma=\sqrt N/s$. There $\sigma|\Pr(D_N=m)-f_N(m)|=\sigma|f_N(m)|
	=\phi(z_m)\,|1+\sum_r N^{-r/2}P_r(z_m)|$, and since $(1+|z|^6)\phi(z)$ times any fixed
	polynomial is bounded by $Ce^{-z^2/4}$,
	\[
		\sup_{|m|>2N}(1+|z_m|^6)\,\sigma\,\bigl|\Pr(D_N=m)-f_N(m)\bigr|
		=O\bigl(e^{-N/(4s^2)}\bigr),
	\]
	which is $o(N^{-K})$ for every fixed $K$. So it suffices to prove
	\eqref{eq:weighted-local-detailed} for $|m|\le 2N$, where
	\begin{equation}\label{eq:z-range}
		|z_m|\le\frac{3N}{\sigma}=\frac{3\sqrt N}{s}.
	\end{equation}
	This restriction is what makes the endpoint terms of Step~3 harmless.

	\emph{Step 3 (the weight is six integrations by parts).} The purpose of the
	weight in \eqref{eq:weighted-local-detailed} is to keep the local estimate useful after
	multiplication by functions with polynomial growth. On the Fourier side this weight is
	obtained by differentiating the transform remainder. Since $(-i)^6=-1$, one has
	$z_m^6 e^{-iz_m t}=-\frac{d^6}{dt^6}e^{-iz_m t}$, and six integrations by parts give
	\[
		(1+z_m^6)\,\sigma\bigl(\Pr(D_N=m)-f_N(m)\bigr)
		=\frac1{2\pi}\int_{\mathbb R}\bigl(R(t)-R^{(6)}(t)\bigr)e^{-iz_m t}\,dt+B_N(m),
	\]
	where $B_N(m)$ collects the endpoint terms at $t=\pm\pi\sigma$. The $\chi$ part has no
	boundary contribution because it is integrated over $\mathbb R$ and is rapidly
	decreasing. The $U$ part is integrated only over $[-\pi\sigma,\pi\sigma]$; after six
	integrations by parts its endpoint terms are products of
	\[
		\frac{d^j}{dt^j}\widehat\psi(t/\sigma)^N\Big|_{t=\pm\pi\sigma}
		\qquad\text{and}\qquad
		\frac{d^{5-j}}{dt^{5-j}}e^{-iz_mt}\Big|_{t=\pm\pi\sigma}
		=(-iz_m)^{5-j}e^{\mp i\pi\sigma z_m},
	\]
	$0\le j\le5$, so they carry powers of $z_m$ up to $|z_m|^5$. Each differentiation of
	$\widehat\psi(t/\sigma)^N$ produces at most one factor of order $N/\sigma$ and lowers the
	power of the characteristic function by at most one, whence
	\[
		|B_N(m)|\le CN^5\sigma^{-5}\bigl(1+|z_m|^5\bigr)\,|\widehat\psi(\pi)|^{N-5}.
	\]
	Since $|\widehat\psi(\pm\pi)|=|\psi(\pm\pi)|=|1-2(a+b)|<1$ and, by \eqref{eq:z-range},
	$1+|z_m|^5=O(N^{5/2})$ on the relevant range $|m|\le2N$, the endpoint terms are
	exponentially small: $\sup_{|m|\le2N}|B_N(m)|=o(N^{-K})$ for every fixed $K$. (This is
	the only place where the polynomial weight interacts with the cutoff, and it is why
	Step~2 was needed: without the restriction $|m|\le2N$ the factor $|z_m|^5$ would be
	unbounded.) Taking absolute values,
	\begin{equation}\label{eq:wle-reduce}
		\sup_{|m|\le 2N}(1+|z_m|^6)\,\sigma\,\bigl|\Pr(D_N=m)-f_N(m)\bigr|
		\le\frac1{2\pi}\bigl(\|R\|_{L^1}+\|R^{(6)}\|_{L^1}\bigr)+o(N^{-K}).
	\end{equation}
	Thus the weighted pointwise estimate has been reduced to an
	$L^1$ estimate on the transform remainder. It remains to prove
	$\|R\|_{L^1}+\|R^{(6)}\|_{L^1}=O(N^{-3})$, and we split the real line into four
	regions.

	\emph{Region A: $|t|\le N^{1/8}$.} For $|\theta|$ small the cumulant expansion gives
	\[
		\log\widehat\psi(\theta)
		=-\frac{s^2\theta^2}{2}
		+\sum_{r=3}^{7}\frac{\kappa_r}{r!}(i\theta)^r+r_8(\theta),
		\qquad r_8^{(q)}(\theta)=O(|\theta|^{8-q}),\quad 0\le q\le6,
	\]
	because the three-point characteristic function is analytic near the origin. After the
	scaling $\theta=t/\sigma$, the first four non-Gaussian terms become
	\[
		N\frac{\kappa_3}{3!}\Bigl(\frac{it}{\sigma}\Bigr)^3
		=\frac{\lambda_3}{6\sqrt N}(it)^3,\quad
		N\frac{\kappa_4}{4!}\Bigl(\frac{it}{\sigma}\Bigr)^4
		=\frac{\lambda_4}{24N}(it)^4,
	\]
	and similarly the fifth, sixth and seventh cumulants enter at orders $N^{-3/2}$, $N^{-2}$
	and $N^{-5/2}$.
	The leading non-Gaussian exponent is now uniformly small:
	$N^{-1/2}|t|^3\le N^{-1/8}$ on this region. The logarithmic remainder is
	$Nr_8(t/\sigma)=O(N^{-3}|t|^8)$. Therefore the exponential of the non-Gaussian
	part may be expanded uniformly through relative order $N^{-5/2}$, giving exactly the
	Fourier polynomial $\chi(t)$: the products such as $\lambda_3^2(it)^6/72$,
	$\lambda_3\lambda_4(it)^7/144$, and the higher products in $P_4,P_5$ are produced by
	the ordinary exponential series. All omitted terms, including the logarithmic
	remainder, are bounded by $N^{-3}$ times a fixed polynomial in $t$.

	The same estimate is stable under six differentiations. To see precisely where
	the order is used, write
	\[
	\begin{aligned}
		N\log\widehat\psi(t/\sigma)
		=-\frac{t^2}{2}
		&+N^{-1/2}A(t)+N^{-1}B(t)+N^{-3/2}C(t)\\
		&+N^{-2}D(t)+N^{-5/2}E(t)+\rho_N(t),
	\end{aligned}
	\]
	where \(A,B,C,D,E\) are fixed polynomials of degrees \(3,4,5,6,7\), and
	\[
		\rho_N^{(q)}(t)=O(N^{-3}|t|^{8-q}),\qquad 0\le q\le6 .
	\]
	The Fourier polynomial \(\chi\) is obtained by expanding
	\[
		\exp\{N^{-1/2}A+N^{-1}B+N^{-3/2}C+N^{-2}D+N^{-5/2}E\}
	\]
	and retaining exactly the terms of total order at most \(N^{-5/2}\).  Hence every
	omitted product is $O(N^{-3})$, multiplied by a fixed polynomial in \(t\), and the
	same is true after six differentiations because derivatives of \(A,\dots,E\) only
	reduce degrees and the differentiated \(\rho_N\) satisfies the bound above.
	Equivalently, with \(h(t)=N\log\widehat\psi(t/\sigma)\), the Gaussian part gives
	$h'(t)=O(1+|t|)$, $h''(t)=O(1)$, while $h^{(q)}(t)=O(\sigma^{2-q})$ for
	$3\le q\le6$ on this region. Fa\`a di Bruno's formula therefore yields,
	together with the Gaussian envelope
	$\operatorname{Re}\log\widehat\psi(\theta)\le-\tfrac{s^2}{4}\theta^2$ for small
	$|\theta|$,
	\[
		|R(t)|+|R^{(6)}(t)|\le C\,N^{-3}(1+|t|^{M})e^{-t^2/4}\qquad(|t|\le N^{1/8})
	\]
	for a fixed $M$. The right-hand side is integrable uniformly in $N$, so Region~A
	contributes $O(N^{-3})$ to $\|R\|_{L^1}+\|R^{(6)}\|_{L^1}$.

	\emph{Region B: $N^{1/8}<|t|\le\delta\sigma$.} Choose $\delta>0$ so small that
	$\operatorname{Re}\log\widehat\psi(\theta)\le-\tfrac{s^2}{4}\theta^2$ for
	$|\theta|\le\delta$. Such a $\delta$ exists because the second derivative of
	$\operatorname{Re}\log\widehat\psi(\theta)$ at $0$ is $-s^2<0$. In this annulus,
	\[
		|\widehat\psi(t/\sigma)^N|\le e^{-t^2/4}\le e^{-N^{1/4}/4}.
	\]
	Six derivatives bring only a polynomial loss: the bounds on the derivatives of
	$h(t)=N\log\widehat\psi(t/\sigma)$ and Fa\`a di Bruno's formula give
	$\frac{d^6}{dt^6}e^{h(t)}=e^{h(t)}O((1+|t|)^6)$, up to a fixed power of $\sigma$ on
	the whole annulus. The approximant has the same Gaussian protection:
	$|\chi(t)|+|\chi^{(6)}(t)|\le C(1+|t|^{M_1})e^{-t^2/2}$. Hence Region~B contributes
	\[
		O\bigl(\sigma^{M_2}e^{-N^{1/4}/4}\bigr)=o(N^{-K})
	\]
	for every fixed $K$.

	\emph{Region C: $\delta\sigma<|t|\le\pi\sigma$.} The $\chi$ part and its sixth
	derivative are exponentially small here by the Gaussian tail. For the lattice part set
	\[
		\varrho:=\max_{\delta\le|\theta|\le\pi}|\widehat\psi(\theta)|<1.
	\]
	The strict inequality follows from aperiodicity and compactness of the interval
	$\{\delta\le|\theta|\le\pi\}$. Thus
	$|\widehat\psi(t/\sigma)^N|\le \varrho^N$. By the chain rule, each derivative of
	$\widehat\psi(t/\sigma)^N$ costs at most a power of $N/\sigma$ and lowers the exponent
	of $\varrho$ only by a bounded amount, so
	\[
		|U^{(6)}(t)|\le C N^6\sigma^{-6}\varrho^{N-6}.
	\]
	Integrating over a set of length at most $2\pi\sigma$ gives
	$O(\sigma N^6\varrho^{N-6})=o(N^{-K})$, and the same is easier for $U$ without
	differentiation.

	\emph{Region D: $|t|>\pi\sigma$.} Only the Edgeworth approximant is present, because
	$U$ is supported on $[-\pi\sigma,\pi\sigma]$. Since $\chi$ is a Gaussian times a
	polynomial of fixed degree,
	\[
		\int_{|t|>\pi\sigma}\bigl(|\chi(t)|+|\chi^{(6)}(t)|\bigr)\,dt
		\le C(1+\sigma^{M_3})e^{-\pi^2\sigma^2/2}=o(N^{-K}),
	\]
	using $\sigma^2=Ns^2$.

	Summing the four regions gives
	\[
		\|R\|_{L^1}+\|R^{(6)}\|_{L^1}=O(N^{-3})+o(N^{-K})=O(N^{-3}).
	\]
	With \eqref{eq:wle-reduce} and Step~2, this proves \eqref{eq:weighted-local-detailed}
	uniformly over all $m\in\mathbb Z$.

	\emph{Uniformity.} Every constant above --- the Taylor constant in Region~A, the
	threshold $\delta$ and the contraction factor $\varrho<1$ in Regions~B--C, and the boundary
	value $|1-2(a+b)|<1$ --- can be chosen uniformly when $(a,b,w)$ ranges over a compact
	subset of $\{a,b\ge0,\ a+b>0,\ w>0\}$; there $s^2=\tau-\mu^2$ is also bounded above and away
	from $0$ (at $a=0$ it equals $b(1-b)>0$). Hence
	\eqref{eq:weighted-local-detailed} holds uniformly over that compact set.
\end{proof}

\subsection{Step 1: from the exact lattice law to a smooth sum}

	Define $h(x) := |x - N\mu|\,f_N(x)$. By \eqref{eq:weighted-local-detailed} there is a
	constant $C$ with
	\[
		|\Pr(D_N = m) - f_N(m)| \le C\,N^{-3} / \bigl(\sigma(1 + |z_m|^6)\bigr),
	\]
	and
	since $|m - N\mu| = \sigma|z_m|$,
	\[
		|m - N\mu|\,\bigl|\Pr(D_N = m) - f_N(m)\bigr|
		\le \frac{C}{N^{3}}\,\frac{|z_m|}{1 + |z_m|^6}.
	\]
	The points $z_m$ form a grid of spacing $1/\sigma$, and $g(z) = |z|/(1 + |z|^6)$ is
	nonnegative and integrable (behaving like $|z|$ near $0$ and like $|z|^{-5}$ at
	infinity). To see the uniformity in the grid shift, split the line into unit
	intervals. On the finitely many intervals near the origin, $g$ is bounded and
	there are $O(\sigma)$ grid points; in the tails, compare the points in each unit
	interval with the summable majorant $C(1+|z|)^{-5}$. Thus its sums over
	any grid of spacing $1/\sigma$ are $O(\sigma)$ (its Riemann sums are $O(1)$), uniformly in the
	shift. Summing,
	\begin{equation}\label{eq:step1-detailed}
		\Bigl|\,\E|\widetilde D_N| - \sum_{m\in\mathbb Z} h(m)\,\Bigr|
		\le \frac{C}{N^{3}}\sum_{m\in\mathbb Z}\frac{|z_m|}{1 + |z_m|^6}
		= O\Bigl(\frac{\sigma}{N^{3}}\Bigr).
	\end{equation}
	Because the leading term is of order $\sigma \asymp \sqrt N$, this is a
	\emph{relative} $O(N^{-3})$ error.

\subsection{Step 2: Euler--Maclaurin with a corner}

	Set $h(x):=|x-N\mu|\,f_N(x)$ and $x_0:=N\mu$, so that in the notation of
	Lemma~\ref{lem:corner-EM} below $g=h$ and $u=f_N$. The smooth sum $\sum_m h(m)$ differs from the
	integral $\int h$ only through the kink of the absolute value at $x_0$, which usually lies
	off the integer lattice. The corner form of the Euler--Maclaurin formula below isolates that
	difference and, read at successive orders, delivers the \emph{whole} lattice correction from a
	single computation --- both the leading $B_2$ term of Lemma~\ref{lem:E-D1-asymp-general} and
	the $B_2,B_3,B_4$ terms of \eqref{eq:ell-j}. The only local input is the jump
	structure of the kink, recorded first.

\begin{lemma}[Second derivative of a smooth profile times a kink]\label{lem:kink-detailed}
	Let $u$ be a $C^2$ function with $u, u', u''$ rapidly decaying. In the sense of
	distributions,
	\begin{equation}\label{eq:kink-second-derivative}
		\bigl(|y|\,u(y)\bigr)'' = 2 u(0)\,\delta_0 + |y|\,u''(y) + 2\,\operatorname{sgn}(y)\,u'(y).
	\end{equation}
\end{lemma}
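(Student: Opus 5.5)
The plan is to verify \eqref{eq:kink-second-derivative} by pairing with a test function, splitting the line at the kink. Let $g(y):=|y|\,u(y)$ and fix $\varphi\in C_c^\infty(\mathbb R)$. By definition of the distributional derivative,
\[
\langle g'',\varphi\rangle=\int_{\mathbb R}|y|\,u(y)\,\varphi''(y)\,dy ,
\]
and we write this as the sum of the integrals over $(0,\infty)$ and $(-\infty,0)$. On each half-line $|y|=\pm y$ is smooth, so $g$ is $C^2$ there. Its classical derivatives are
\[
g'=\pm(u+yu'),\qquad g''=\pm(2u'+yu'')=2\operatorname{sgn}(y)\,u'+|y|\,u'' .
\]
This is exactly the regular (function) part of the claimed right-hand side.

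Next we integrate by parts twice on each half-line. The test function has compact support, so the terms at $\pm\infty$ vanish; the decay hypothesis on $u,u',u''$ is not even needed for this step. It only ensures that the right-hand side defines a tempered distribution, which is what the later Euler--Maclaurin step uses.

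What remains are the boundary terms at $0^\pm$. On $(0,\infty)$ these are
\[
\bigl[g\varphi'-g'\varphi\bigr]_{0^+}^{\infty}=-g(0^+)\varphi'(0)+g'(0^+)\varphi(0),
\]
and on $(-\infty,0)$ they are $g(0^-)\varphi'(0)-g'(0^-)\varphi(0)$. Since $g(0^\pm)=0$, the $\varphi'(0)$ terms drop out; this is why no $\delta_0'$ appears.

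The $\varphi(0)$ coefficient is the jump of the derivative:
\[
g'(0^+)-g'(0^-)=u(0)-(-u(0))=2u(0).
\]
This yields the point mass $2u(0)\,\delta_0$. Adding the two regular integrals gives
\[
\int\bigl(|y|u''+2\operatorname{sgn}(y)u'\bigr)\varphi ,
\]
which completes the identity.

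Equivalently, one can argue with the product rule: $|y|'=\operatorname{sgn}y$ and $|y|''=2\delta_0$, and the Leibniz formula for a distribution times a $C^2$ function gives
\[
(|y|u)''=|y|''u+2|y|'u'+|y|u''=2u(0)\delta_0+2\operatorname{sgn}(y)u'+|y|u'' ,
\]
using $u\,\delta_0=u(0)\delta_0$. Either route works, and I would present the half-line integration by parts as the self-contained version.

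There is no real obstacle here. The one point needing care is the boundary bookkeeping, namely checking that $g(0)=0$ kills the $\delta_0'$ term. This matters later, because in the corner Euler--Maclaurin lemma only the jump $2u(0)$ in $g'$ at $x_0$ (and then the jumps of higher odd derivatives) should feed the $B_k(\{N\mu_j\})$ terms.
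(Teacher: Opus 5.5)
Your proposal is correct and is essentially the paper's argument. Both compute the classical derivatives on each half-line and then read off the atom $2u(0)\,\delta_0$ from the jump $g'(0^+)-g'(0^-)=2u(0)$; your test-function pairing spells out the jump formula that the paper invokes directly, including the check that continuity of $g$ at $0$ excludes a $\delta_0'$ term.
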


\begin{proof}
	For $y > 0$, $(|y|u)' = u + y u'$; for $y < 0$, $(|y|u)' = -u - y u'$. The one-sided
	limits of the first derivative at $0$ are $u(0)$ and $-u(0)$, a jump of $2u(0)$,
	which contributes the atom $2u(0)\delta_0$ to the second derivative; away from $0$,
	ordinary differentiation gives $|y|u'' + 2\operatorname{sgn}(y) u'$.
\end{proof}

\begin{lemma}[Euler--Maclaurin with one corner]\label{lem:corner-EM}
	Let $u$ be a Schwartz function, $x_0\in\mathbb R$, and $g(x)=|x-x_0|\,u(x)$. Write
	$\tilde B_k(x)=B_k(\{x\})$ for the periodic Bernoulli functions and
	$[g^{(j)}]_{x_0}=g^{(j)}(x_0^+)-g^{(j)}(x_0^-)$ for the jump of the $j$-th derivative at
	$x_0$. Then $g$ is continuous, smooth off $x_0$, and rapidly decaying with all its
	derivatives; the jumps are
	\begin{equation}\label{eq:corner-jumps}
		[g^{(k-1)}]_{x_0}=2(k-1)\,u^{(k-2)}(x_0)\qquad(k\ge2),
	\end{equation}
	and for every integer $K\ge2$
	\begin{equation}\label{eq:corner-EM}
	\begin{aligned}
		\sum_{m\in\mathbb Z}g(m)
		&=\int_{\mathbb R}g(x)\,dx
		+\sum_{k=2}^{K}\frac{(-1)^{k-1}}{k!}\,
		\tilde B_k(x_0)\,[g^{(k-1)}]_{x_0}+R_K,\\
		R_K
		&=\frac{(-1)^{K-1}}{K!}
		\int_{\mathbb R}\tilde B_K(x)\,g^{(K)}(x)\,dx,
	\end{aligned}
	\end{equation}
	the last integral over the two smooth pieces $x\lessgtr x_0$.
\end{lemma}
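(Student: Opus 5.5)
The plan is to split the real line at the kink and apply the classical Euler--Maclaurin formula with periodic Bernoulli functions on each half-line separately; adding the two pieces, the boundary terms at $x_0$ assemble into jumps.

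First, the regularity facts. Since $u$ is Schwartz and $|x-x_0|$ is smooth off $x_0$ with polynomial growth, $g$ is continuous, smooth on each side of $x_0$, and rapidly decaying with all one-sided derivatives. For the jumps, write $g=(x-x_0)u$ on the right and $-(x-x_0)u$ on the left. By Leibniz, $g^{(j)}(x)=\pm\bigl[(x-x_0)u^{(j)}(x)+j\,u^{(j-1)}(x)\bigr]$. Letting $x\to x_0^\pm$ gives $g^{(j)}(x_0^\pm)=\pm j\,u^{(j-1)}(x_0)$, so $[g^{(j)}]_{x_0}=2j\,u^{(j-1)}(x_0)$. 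Setting $j=k-1$ yields \eqref{eq:corner-jumps}. The case $k=2$ is the atom $2u(x_0)$ of Lemma~\ref{lem:kink-detailed}.

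Next, the summation identity. On a half-line $[x_0,\infty)$ with an endpoint that need not be an integer, I would use the standard formula
\[
\sum_{m>x_0}G(m)=\int_{x_0}^\infty G\,dx+\sum_{k=1}^{K}\frac{(-1)^{k}}{k!}\tilde B_k(x_0)\,G^{(k-1)}(x_0)+\frac{(-1)^{K-1}}{K!}\int_{x_0}^\infty \tilde B_K G^{(K)}\,dx .
\]
It is derived by repeated integration by parts of $\int_{x_0}^\infty G\,d(\lfloor x\rfloor)$-type Stieltjes integrals, using $\tilde B_k'=k\tilde B_{k-1}$, the saw-tooth $\tilde B_1(x)=\{x\}-\tfrac12$, and decay at $+\infty$ to kill the far boundary terms. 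I would check the signs against the simplest case $K=1$. The analogous formula on $(-\infty,x_0]$ has the boundary terms with the opposite sign. Adding the two applied to the two smooth pieces of $g$ gives
\[
\sum_{m\in\mathbb Z} g(m)=\int_{\mathbb R}g(x)\,dx+\sum_{k=1}^{K}\frac{(-1)^{k-1}}{k!}\tilde B_k(x_0)\,[g^{(k-1)}]_{x_0}+R_K .
\]
The $k=1$ term carries the jump $[g]_{x_0}=0$ because $g$ is continuous, so it drops. The same argument covers the case where $x_0$ is an integer: the point $m=x_0$ is counted once, and $g(x_0)=0$ in any case. This leaves exactly \eqref{eq:corner-EM}, with $R_K$ the sum of the two one-sided remainders.

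The main obstacle is bookkeeping rather than ideas. I need the half-line formula with a non-integer endpoint to carry the correct sign convention $(-1)^{k}$ versus $(-1)^{k-1}$, consistent with the way $\tilde B_1$ handles the endpoint. I would verify it at $K=1$ directly: $\int_{x_0}^\infty \tilde B_1 G'\,dx$ integrated by parts reproduces $\sum_{m>x_0}G(m)-\int_{x_0}^\infty G\,dx$ plus the boundary term. I would then induct on $K$ via $\int \tilde B_K G^{(K)}=\bigl[\tilde B_{K+1}G^{(K)}/(K+1)\bigr]-\int\tilde B_{K+1}G^{(K+1)}/(K+1)$. Convergence of every integral follows from the rapid decay of $g$ and its derivatives together with the boundedness of $\tilde B_k$.
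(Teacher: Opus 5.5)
Your plan is sound, and the jump computation is the paper's (Leibniz on each side, then $x\to x_0^\pm$). But the one formula that carries the argument has a sign error: in your half-line identity the boundary terms must carry $(-1)^{k-1}/k!$, not $(-1)^k/k!$.

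Already at $K=1$, write $\lfloor x\rfloor=x-\tfrac12-\tilde B_1(x)$ and integrate by parts with the right-continuous $\tilde B_1$. This gives
\[
\sum_{m>x_0}G(m)=\int_{x_0}^\infty G\,dx+\tilde B_1(x_0)\,G(x_0)+\int_{x_0}^\infty\tilde B_1G'\,dx .
\]
For $x_0$ just below $0$, $\tilde B_1(x_0)\approx\tfrac12$, and this is the classical $\sum_{m\ge0}G(m)=\int_0^\infty G+\tfrac12G(0)+\cdots$. Your sign would give $-\tfrac12G(0)$, i.e.\ it would drop the term $m=0$. The recursion $\int_{x_0}^\infty\tilde B_kG^{(k)}=-\tfrac1{k+1}\tilde B_{k+1}(x_0)G^{(k)}(x_0)-\tfrac1{k+1}\int_{x_0}^\infty\tilde B_{k+1}G^{(k+1)}$ then produces $(-1)^{k-1}/k!$ at every level. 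Your remainder coefficient $(-1)^{K-1}/K!$ is already correct.

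As written, adding your right-hand formula to the left-hand one ``with the opposite sign'' gives $\sum_k\frac{(-1)^k}{k!}\tilde B_k(x_0)[g^{(k-1)}]_{x_0}$. That is the negative of the combined formula you display, so your final identity does not follow from your own pieces. The $K=1$ check you propose would catch this. With the sign corrected everything goes through, including integer $x_0$, since $\sum_{m>x_0}$ and $\sum_{m\le x_0}$ partition $\mathbb Z$ under the right-continuous convention.

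Once repaired, your route differs from the paper's only in where the line is cut. The paper starts from the whole-line level-one identity $\sum_mg(m)=\int g+\int\tilde B_1g'$. This is legitimate because $g$ is continuous with $g,g'\in L^1$. It splits at $x_0$ only when passing to the continuous $\tilde B_2$, where the two interior endpoint terms combine directly into $-\tfrac12\tilde B_2(x_0)[g']_{x_0}$. So no $k=1$ corner term ever arises, and no convention is needed for an integer $x_0$.

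You cut at level zero instead. That requires a half-line Euler--Maclaurin formula with an arbitrary endpoint, and it produces a $\tilde B_1$ boundary term on each side, which then cancel because $[g]_{x_0}=0$. Both arguments are correct: the paper's is slightly more economical, and yours is more modular, being the standard half-line formula applied twice.
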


\begin{proof}
	On each side of $x_0$ one has $g^{(j)}(x)=\pm\bigl((x-x_0)u^{(j)}(x)+j\,u^{(j-1)}(x)\bigr)$ by the
	Leibniz rule (since $(x-x_0)''=0$), with $+$ for $x>x_0$; letting $x\to x_0^\pm$ gives
	$[g^{(j)}]_{x_0}=2j\,u^{(j-1)}(x_0)$, which is \eqref{eq:corner-jumps} with $j=k-1$ (the case
	$k=2$ is \eqref{eq:kink-second-derivative}, where $[g']_{x_0}=2u(x_0)$); $g$ itself is
	continuous.

	The level-one periodic Euler--Maclaurin identity, valid because $g$ is continuous with
	$g,g'\in L^1(\mathbb R)$,
	\[
		\sum_{m\in\mathbb Z}g(m)=\int_{\mathbb R}g+\int_{\mathbb R}\tilde B_1(x)\,g'(x)\,dx,
		\qquad\tilde B_1(x)=\{x\}-\tfrac12,
	\]
	begins the recursion. As $g'$ jumps only at $x_0$, split
	$\int_{\mathbb R}=\int_{-\infty}^{x_0}+\int_{x_0}^{\infty}$ and integrate by parts using
	$\tilde B_1=\tfrac12\tilde B_2'$; the periodic $\tilde B_2$ is continuous, the endpoints at
	$\pm\infty$ vanish by decay, and the two interior endpoints at $x_0$ combine to
	$-\tfrac12\tilde B_2(x_0)[g']_{x_0}$, so that
	\[
		\int_{\mathbb R}\tilde B_1 g'
		=-\tfrac12\tilde B_2(x_0)[g']_{x_0}-\tfrac12\int_{\mathbb R}\tilde B_2 g''.
	\]
	Iterating with $\tilde B_{k-1}=\tfrac1k\tilde B_k'$ peels off, at level $k$, the
	corner term
	\[
		\frac{(-1)^{k-1}}{k!}\tilde B_k(x_0)[g^{(k-1)}]_{x_0}
	\]
	and passes to $\tfrac{(-1)^{k-1}}{k!}\int\tilde B_k g^{(k)}$; stopping at $k=K$ leaves
	$R_K$. (The level-one identity is the case $k=1$ of the same formula, with the empty
	corner sum and $R_1=\int\tilde B_1g'$; the displayed $k=2$ step above is
	$R_2=-\tfrac12\int\tilde B_2g''$, which fixes the sign of $R_K$.)
\end{proof}

	We apply \eqref{eq:corner-EM} to $g=h=|x-N\mu|\,f_N$, so $u=f_N$ and $x_0=N\mu$. Because
	$f_N=\sigma^{-1}\phi(z)[\,\cdots]$ with $z=(x-N\mu)/\sigma$, and $\sigma=s\sqrt N$,
	\begin{equation}\label{eq:corner-order}
		\frac{f_N^{(k-2)}(N\mu)}{\sigma}
		=\frac1{\sigma^{k}}\sum_{r\ge0}N^{-r/2}\,(\phi P_r)^{(k-2)}(0)
		\qquad(P_0:=1),
	\end{equation}
	so the corner term of index $k$, read against the Edgeworth profile of index $r$,
	contributes at \emph{relative} order $N^{-(k+r)/2}$ and carries the factor
	$(\phi P_r)^{(k-2)}(0)$. This single bookkeeping rule governs everything that follows.
	Applying Lemma~\ref{lem:corner-EM} with $K=6$, the retained terms $k=2,3,4$ capture every
	contribution through relative order $N^{-2}$, and the omitted corners $k=5,6$ are relative
	$O(N^{-3})$ by \eqref{eq:corner-order} and the parity rule of Step~5 (for $k=5$ the term
	$r=0$ would sit at $N^{-5/2}$, but it carries $\phi'''(0)=0$).

	The same scaling gives the required bound for the integral remainder. On the two smooth
	pieces \(x\lessgtr x_0\), writing \(y=x-x_0\),
	\[
		\frac{d^6}{dx^6}\bigl(|y|f_N(x)\bigr)
		=\operatorname{sgn}(y)\bigl(y f_N^{(6)}(x)+6f_N^{(5)}(x)\bigr).
	\]
	Each derivative of \(f_N=\sigma^{-1}\phi(z)[\cdots]\), \(z=y/\sigma\), contributes one
	factor \(\sigma^{-1}\), and the Edgeworth polynomial factors have Gaussian decay.
	Consequently
	\[
		\int_{\mathbb R}|y|\,|f_N^{(6)}(x)|\,dx
		+\int_{\mathbb R}|f_N^{(5)}(x)|\,dx=O(\sigma^{-5}).
	\]
	Since the periodic Bernoulli function is bounded, \(R_6=O(\sigma^{-5})\), which is
	relative \(O(\sigma^{-6})=O(N^{-3})\) compared with the leading term of order
	\(\sigma\). The integral $\int h$ and the relevant corner terms are evaluated in
	Steps~3 and~4.

\subsection{Step 3: the smooth integral}

	The integral in \eqref{eq:corner-EM} carries the smooth (Edgeworth) part; it does not see
	where the lattice sits relative to the kink. With $z=(x-N\mu)/\sigma$ the factor $\sigma^{-1}$
	in $f_N$ cancels the Jacobian $dx=\sigma\,dz$, so by \eqref{eq:edgeworth-pseudo}
	\[
		\int_{\mathbb R}h\,dx=\int_{\mathbb R}|x-N\mu|\,f_N(x)\,dx
		=\sigma\int_{\mathbb R}|z|\,\phi(z)\Bigl[1+\sum_{r=1}^5 N^{-r/2}P_r(z)\Bigr]dz.
	\]
	Since $|z|\phi$ is even and $P_r$ has parity $(-1)^r$, the odd terms $r=1,3,5$ integrate to
	zero (the odd-order Edgeworth polynomials integrate to zero against the even weight $|z|\phi$), leaving $r=0,2,4$. Writing
	$I_{2\ell}:=\int_{\mathbb R}|z|\phi(z)H_{2\ell}(z)\,dz$ and $A:=\sqrt{2/\pi}$, one has
	$I_0=A$ and $I_{2\ell}=(-1)^{\ell-1}(2\ell-3)!!\,A$ for $\ell\ge1$, i.e.
	\[
		\begin{gathered}
			I_4=-A,\quad I_6=3A,\quad I_8=-15A,\\
			I_{10}=105A,\quad I_{12}=-945A.
		\end{gathered}
	\]
	Hence, with $P_2=\tfrac{\lambda_4}{24}H_4+\tfrac{\lambda_3^2}{72}H_6$ and $P_4$ from
	\eqref{eq:edgeworth-pseudo},
	\[
		\int|z|\phi\,P_2=\frac{\lambda_3^2-\lambda_4}{24}\,A,
	\]
	\[
		\int|z|\phi\,P_4=\frac{24\lambda_6-120\lambda_3\lambda_5-75\lambda_4^2
		+350\lambda_3^2\lambda_4-175\lambda_3^4}{5760}\,A,
	\]
	the second by inserting $I_6,\dots,I_{12}$. Therefore
	\begin{equation}\label{eq:zero-mode-detailed}
	\begin{gathered}
		\int_{\mathbb R}h\,dx
		=\sqrt{\tfrac2\pi}\,\sigma\Bigl[1+\frac{\lambda_3^2-\lambda_4}{24 N}
		+\frac{c^{(2)}_{\mathrm{sm}}}{N^2}\Bigr],\\
		c^{(2)}_{\mathrm{sm}}
		=\frac{24\lambda_6-120\lambda_3\lambda_5-75\lambda_4^2
		+350\lambda_3^2\lambda_4-175\lambda_3^4}{5760},
	\end{gathered}
	\end{equation}
	the smooth parts of $c_j$ and $c_j^{(2)}$.

\subsection{Step 4: the corner terms}

	The corner sum in \eqref{eq:corner-EM} carries the lattice correction. By
	\eqref{eq:corner-jumps} with $u=f_N$, $[h^{(k-1)}]_{N\mu}=2(k-1)\,f_N^{(k-2)}(N\mu)$, so the
	corner terms are read off the derivatives of the Edgeworth density $f_N$ at its center. From
	$\tfrac{d}{dz}(\phi H_k)=-\phi H_{k+1}$ and $H_{2\ell}(0)=(-1)^\ell(2\ell-1)!!$, $H_{2\ell+1}(0)=0$,
	\begin{equation}\label{eq:fN-derivs-corner}
	\begin{aligned}
		f_N(N\mu)
		&=\frac{\phi(0)}\sigma
		\Bigl[1+\frac{\lambda_4}{8N}-\frac{5\lambda_3^2}{24N}+\cdots\Bigr],\\
		f_N'(N\mu)
		&=-\frac{\phi(0)\lambda_3}{2\sigma^2\sqrt N}+\cdots,\\
		f_N''(N\mu)
		&=-\frac{\phi(0)}{\sigma^3}+\cdots.
	\end{aligned}
	\end{equation}
	Insert these into the $k=2,3,4$ corner terms of \eqref{eq:corner-EM},
	\[
		-\tfrac12\tilde B_2(x_0)\,[h']_{x_0}
		+\tfrac16\tilde B_3(x_0)\,[h'']_{x_0}
		-\tfrac1{24}\tilde B_4(x_0)\,[h''']_{x_0},
		\qquad x_0=N\mu,
	\]
	with jumps $[h']_{x_0}=2f_N(x_0)$, $[h'']_{x_0}=4f_N'(x_0)$, $[h''']_{x_0}=6f_N''(x_0)$, and
	divide by the leading $\sqrt{2/\pi}\,\sigma=2\phi(0)\sigma$. At relative order $N^{-1}$ only
	$k=2$ with the Gaussian $f_N(x_0)=\phi(0)/\sigma$ survives:
	\begin{equation}\label{eq:corner-leading}
		\frac{-B_2(\{N\mu\})\,f_N(x_0)}{\sqrt{2/\pi}\,\sigma}
		=-\frac{B_2(\{N\mu\})}{2\,s^2\,N},
	\end{equation}
	the leading lattice term of $c_j$. Carrying the Edgeworth corrections to $f_N(x_0)$ and the
	(Edgeworth-order) $f_N'(x_0),f_N''(x_0)$ of \eqref{eq:fN-derivs-corner} into all three corner
	terms gives, at relative order $N^{-2}$, the lattice part
	\begin{equation}\label{eq:ell-j-detailed}
		\ell_j(N)=\frac{5\lambda_3^2-3\lambda_4}{48\,s^2}\,B_2(\{N\mu\})
		-\frac{\lambda_3}{6\,s^3}\,B_3(\{N\mu\})
		+\frac1{8\,s^4}\,B_4(\{N\mu\}),
	\end{equation}
	the polynomial $B_3$, odd about $t=\tfrac12$, carried by the skewness $\lambda_3$ through
	$f_N'(x_0)$. The truncation is self-certifying: after retaining the displayed $k=2,3,4$ terms,
	the unretained part consists of the $k=5,6$ corner terms and the Edgeworth pieces of index $r$
	with $k+r\ge5$ in the $k=2,3,4$ corner terms, all relative $O(N^{-3})$ by
	\eqref{eq:corner-order} and the parity rule of Step~5, together with the Euler--Maclaurin
	remainder
	\[
		|R_6|\le\frac1{6!}\,\|\tilde B_6\|_\infty\!\int_{\mathbb R}|h^{(6)}(x)|\,dx=O(\sigma^{-5}),
	\]
	a relative $O(N^{-3})$. The harmonics $B_5,B_6,\dots$ therefore \emph{enter} only at relative
	order $N^{-3}$; so exactly $B_2,B_3,B_4$ occur through $N^{-2}$.
	Geometrically the kink sits at $x=N\mu$ while the law is sampled at integers, and the corner
	Euler--Maclaurin encodes the dependence on the fractional position $\{N\mu\}$ inside one
	lattice cell through the periodic Bernoulli polynomials.

\subsection{Step 5: parity, and the assembly of both orders}

	The expansion has only two sources: the smooth zero mode of Step~3 and the corner terms
	of Step~4. Each carries a parity, and together they force the whole expansion onto the
	integer powers of $N^{-1}$.

\begin{lemma}[Parity]\label{lem:parity}
	In the expansion of $\E|\widetilde D_N|$ relative to its leading term
	$\sqrt{2/\pi}\,\sigma$:
	\begin{enumerate}
		\item the smooth zero mode of index $r$ sits at relative order $N^{-r/2}$ and carries
		$\int_{\mathbb R}|z|\phi(z)P_r(z)\,dz$, which vanishes for every \emph{odd} $r$;
		\item the corner term of index $k\ge2$, read against the Edgeworth profile of index
		$r\ge0$, sits at relative order $N^{-(k+r)/2}$ and carries $(\phi P_r)^{(k-2)}(0)$,
		which vanishes whenever $k+r$ is \emph{odd}.
	\end{enumerate}
	Consequently every half-integer relative order vanishes and, through the order retained here,
	the expansion proceeds in integer powers of $N^{-1}$; the argument applies verbatim at every
	order.
\end{lemma}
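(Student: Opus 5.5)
The plan is to prove the two parity statements separately and then combine them with the bookkeeping rule \eqref{eq:corner-order}.

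\emph{Part 1: the smooth zero mode.} By Step~3 the zero mode of index $r$ equals $\sigma N^{-r/2}\int|z|\phi(z)P_r(z)\,dz$, which is relative order $N^{-r/2}$. Each $P_r$ is a linear combination of Hermite polynomials $H_m$ with $m\equiv r\pmod 2$. This follows from the Fourier side: the coefficient of $N^{-r/2}$ in $\exp\bigl(\sum_{q\ge3}N^{-(q-2)/2}\lambda_q(it)^q/q!\bigr)$ is a sum of products $\prod(it)^{q_i}$ with $\sum(q_i-2)=r$, so its total degree $\sum q_i=r+2\#\{i\}$ has the parity of $r$. Since $H_m$ has parity $(-1)^m$, $P_r$ is odd for odd $r$. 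Because $|z|\phi(z)$ is even, the integral of an odd integrable function against it vanishes, which settles part~1.

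\emph{Part 2: the corner terms.} By \eqref{eq:corner-jumps} the corner of index $k$ is $\frac{(-1)^{k-1}}{k!}\tilde B_k(x_0)\,2(k-1)f_N^{(k-2)}(x_0)$. Differentiating $f_N(x)=\sigma^{-1}\phi(z)[1+\sum_r N^{-r/2}P_r(z)]$ a total of $k-2$ times in $x$ gives the factor $\sigma^{-(k-1)}$, which produces \eqref{eq:corner-order}. Relative to $\sigma$, the $(k,r)$ piece therefore has size $\sigma^{-k}N^{-r/2}\asymp N^{-(k+r)/2}$ and carries $(\phi P_r)^{(k-2)}(0)$. The function $\phi P_r$ has parity $(-1)^r$, so its $(k-2)$-th derivative has parity $(-1)^{r+k}$. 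An odd function vanishes at $0$, hence $(\phi P_r)^{(k-2)}(0)=0$ whenever $k+r$ is odd.

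\emph{Consequence.} Every contribution at relative order $N^{-\nu/2}$ with $\nu$ odd is one of two kinds:
\begin{itemize}
\item a zero mode with $r=\nu$ odd, which vanishes by part~1;
\item a corner piece with $k+r=\nu$ odd, which vanishes by part~2.
\end{itemize}
The remainders controlled in Steps~1--2 (Lemma~\ref{lem:weighted-local-edgeworth}, the bound on $R_6$, and the discarded pieces with $k+r\ge5$) are all relative $O(N^{-3})$. So through the retained order only the integer powers $N^{0},N^{-1},N^{-2}$ survive.

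For the claim ``at every order'', increase the Edgeworth truncation and the Euler--Maclaurin level $K$ together. The same two parity facts hold for all $r$ and $k$, so only the bookkeeping changes.

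\emph{Main obstacle.} The only delicate point is the parity claim for $P_r$ at arbitrary $r$, namely that no odd/even mixing arises when the cumulant series is exponentiated. The degree count above handles this directly and does not require the explicit coefficients; I would state it once as a sublemma. A minor check is that the $\tilde B_k(x_0)$ factors are bounded uniformly in the phase, so they do not disturb the order count.
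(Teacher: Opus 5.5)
Your proposal is correct and follows essentially the same route as the paper: you use the evenness of $|z|\phi$ against the parity-$(-1)^r$ polynomial $P_r$ for the zero modes, the parity $(-1)^{r+k}$ of $(\phi P_r)^{(k-2)}$ at the origin for the corner terms, and the order bookkeeping of \eqref{eq:corner-order} to conclude that no half-integer order survives. The one addition is your explicit degree count showing that $P_r$ has parity $(-1)^r$; the paper does not re-prove this inside the parity lemma but records it once, when the Edgeworth polynomials are constructed.
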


\begin{proof}
	(i) $|z|\phi$ is even and $P_r$ has parity $(-1)^r$, so the integrand is odd for odd $r$.
	(ii) The order is \eqref{eq:corner-order}. The function $\phi P_r$ has parity $(-1)^r$, so
	its $(k-2)$-nd derivative has parity $(-1)^{r+k-2}=(-1)^{r+k}$; an odd function vanishes at
	$0$, which is the case $k+r$ odd. A half-integer relative order $N^{-\ell/2}$ with $\ell$
	odd can be reached only with $r=\ell$ odd in (i), or with $k+r=\ell$ odd in (ii); both
	vanish.
\end{proof}

	The lemma explains the pattern already visible in the answer. At relative order $N^{-1}$
	only $(k,r)=(2,0)$ and the smooth $r=2$ survive, giving $c_j(N)$. At $N^{-2}$ the surviving
	pairs are $(2,2)$, $(3,1)$, $(4,0)$ --- exactly the three lattice terms $B_2,B_3,B_4$ of
	$\ell_j$ --- together with the smooth $r=4$. At $N^{-1/2}$, $N^{-3/2}$ and $N^{-5/2}$
	nothing survives at all. This is why the second correction is followed by $O(N^{-3})$ and
	not by $O(N^{-5/2})$: there is no $N^{-5/2}$ term to carry.

	Combining Step~1 \eqref{eq:step1-detailed} (the relative $O(N^{-3})$ passage to the smooth
	sum), the corner Euler--Maclaurin identity \eqref{eq:corner-EM} with $K=6$, the smooth integral of
	Step~3 \eqref{eq:zero-mode-detailed}, the corner terms
	\eqref{eq:corner-leading}--\eqref{eq:ell-j-detailed} of Step~4, and Lemma~\ref{lem:parity},
	\[
		\E|\widetilde D_N|
		=\sqrt{\tfrac2\pi}\,\sigma
		\Bigl[1+\frac{c_j(N)}{N}+\frac{c_j^{(2)}(N)}{N^2}+O(N^{-3})\Bigr],
	\]
	with, on restoring the edge index $j$,
	\[
		c_j(N)=\frac{\lambda_3^2-\lambda_4}{24}-\frac{B_2(\{N\mu_j\})}{2s_j^2},
		\qquad
		c_j^{(2)}(N)=c^{(2)}_{\mathrm{sm}}+\ell_j(N),
	\]
	the smooth $c^{(2)}_{\mathrm{sm}}$ from \eqref{eq:zero-mode-detailed} and the lattice
	$\ell_j$ from \eqref{eq:ell-j-detailed}. This is exactly \eqref{eq:E-D1-asymp-general}, Lemma~\ref{lem:E-D1-asymp-general}; the $1/N$ and $1/N^2$ brackets are its two corrections. This completes the detailed proof, from a single corner Euler--Maclaurin computation.

% =====================================================================
\section{The statistic $d_{\mathbf r}$ under a possibly different sampling law}
	\label{sec:E-dXY}

	So far --- and in Part~1 for the uniform law --- the expectation of the discrete total
	variation was computed in the matched case, $d=d_{\mathbf u}$ and $d_{\mathbf p}$, where the
	sampling and reference laws coincide. Now we treat a more general situation, the one met under
	an alternative: the data $\mathbf X$ comes from the sampling distribution $\mathbf p$, while
	the statistic is centered at the reference distribution $\mathbf r$. Then
	the quantity inside an individual absolute value no longer has mean zero,
	so that the symmetry on which the previous analysis rested is lost.

\begin{definition}[Reference-centered statistic $d_{\mathbf r}$]\label{def:dXY}
	Let $\mathbf X\sim\Mult(N;\mathbf p)$ be the observed vector and let
	$\mathbf r=(r_1,\ldots,r_n)$ be the reference probability vector, fixed by the null. The
	statistic is $d_{\mathbf r}$ of \eqref{eq:dr-def},
	\begin{equation}\label{eq:dXY-def}
		d_{\mathbf r}=\sum_{i=1}^{n-1}\bigl|D_i-N(r_{i+1}-r_i)\bigr|,\qquad D_i=X_{i+1}-X_i,
	\end{equation}
	now viewed under the sampling law $\mathbf p$. Throughout, $\mathbf r$ is a fixed, fully
	specified (\emph{simple}) null; parameter estimation or a composite $\mathbf r$ is not treated
	here, since centering at an estimated law would change the analysis.
\end{definition}

	\paragraph{Special cases}
	\begin{itemize}
		\item If $\mathbf p=\mathbf r=\mathbf u:=(1/n,\dots,1/n)$ is uniform, then $d_{\mathbf r}$ is the
		linear discrete total variation under the uniform null; its expectation was computed in
		\cite[Theorem~3.2]{banic_elezovic_p1}.
		\item If $\mathbf p=\mathbf r$ is arbitrary, one obtains the matched statistic
		$d_{\mathbf p}$ from \S\ref{ssec:tvc-setup}.
		\item In the general case the sampling law $\mathbf p$ and the reference law
		$\mathbf r$ may differ. The statistic centers the observed differences
		$X_{i+1}-X_i$ around the reference shift $N(r_{i+1}-r_i)$; the case
		$\mathbf r=\mathbf u$ under a non-uniform sampling law is included here.
	\end{itemize}

	\paragraph{Statistical interpretation} In tests of the hypothesis
	``$\mathbf X \sim \Mult(N; \mathbf r)$'' the statistic has a distribution that
	depends both on $\mathbf p$ (the sampling distribution) and on $\mathbf r$ (the reference law):
	the law of $d_{\mathbf r}$ under $\mathbf p$ is the relevant one under the alternative (since
	$\mathbf X$ has sampling probability $\mathbf p$, not $\mathbf r$). Under $H_0$: $\mathbf p = \mathbf r$,
	$d_{\mathbf r}$ reduces to the matched statistic $d_{\mathbf p}$. Under $H_1$: $\mathbf p \ne \mathbf r$,
	$d_{\mathbf r}$ accumulates the shifts between the sampling and reference trends.

	\paragraph{The key difference from $\chi^2$} The Pearson $\chi^2$
	statistic~\cite{pearson1900} has, under the usual fixed-$n$ regularity
	conditions (in particular all $r_i>0$), the \emph{asymptotic} null distribution
	$\chi^2_{n-1}$, which does
	\emph{not depend} on the reference vector $\mathbf r$.
	This is the reason why $\chi^2$ is ``general-purpose''. The distribution of $d_{\mathbf r}$
	under $H_0$ depends on $\mathbf r$ through the whole adjacent-edge structure: the
	differences $r_{i+1}-r_i$ place the kinks, while the adjacent sums $r_i+r_{i+1}$ enter the
	edge variances, the null values $s_i^2=(r_i+r_{i+1})-(r_{i+1}-r_i)^2$ of
	Lemma~\ref{lem:cov-Dj-multinomial}.
	It is the \emph{differential} structure that carries the shape information the test is
	sensitive to, but the null law must in any case be computed for each $\mathbf r$
	separately. Consequently --- a heuristic suggested by the experiments
	of~\cite{banic_elezovic_ct}, not a power theorem proved here --- a test that rejects for large
	$d_{\mathbf r}$ can be expected to gain over $\chi^2$ mainly when the edge mismatches
	$\delta_i=(p_{i+1}-p_i)-(r_{i+1}-r_i)$ alternate in sign, i.e.\ for a period-$2$ (comb)
	departure; in~\cite{banic_elezovic_ct} the gain is already lost at periods $4$ and $6$, and for
	smooth mismatches $\chi^2$ is preferable. There is no blind direction: writing
	$\mathbf e=\mathbf p-\mathbf r$, one has
	$\sum_i|\delta_i|\ge\max\mathbf e-\min\mathbf e\ge2\|\mathbf e\|_1/n$; the first inequality is an
	equality for every monotone departure (a linear ramp among them), and both are equalities for a
	balanced single step $\mathbf e=(a,\dots,a,-a,\dots,-a)$ with $n$ even, which is the quantitative
	form of ``smooth mismatches are the weak directions.''

% =====================================================================
\subsection{$d_{\mathbf r}$: asymmetric centering}
	\label{ssec:dXY-setup}

	Let $\mathbf X = (X_1, \ldots, X_n) \sim \Mult(N; \mathbf p)$ be the observed counts,
	and $\mathbf r$ the reference probabilities (Def.~\ref{def:dXY}).
	From (\ref{eq:dXY-def}),
	\begin{equation}\label{eq:dXY-recall}
		d_{\mathbf r} = \sum_{i=1}^{n-1}\bigl|D_i - N(r_{i+1} - r_i)\bigr|,
		\qquad D_i = X_{i+1} - X_i.
	\end{equation}

	\paragraph{Splitting into fluctuation and shift} Let us center $D_i$ around
	the \emph{sampling} mean $\E[D_i] = N(p_{i+1} - p_i)$, introducing
	$\tilde D_i := D_i - N(p_{i+1} - p_i)$ with $\E[\tilde D_i] = 0$. Then
	\begin{equation}\label{eq:dXY-split}
		D_i - N(r_{i+1} - r_i) = \tilde D_i + N\delta_i,
		\qquad
		\delta_i := (p_{i+1} - p_i) - (r_{i+1} - r_i).
	\end{equation}
	We call the quantity $\delta_i$ the \emph{gradient mismatch} at edge $i$ (it is the first
	difference of the bin-level departure $\mathbf p-\mathbf r$; it is not to be confused with the
	Dirac distributions of \S\ref{sec:E-D1-detailed} and of the proof of
	Theorem~\ref{thm:dXY-subleading}). Therefore
	\begin{equation}\label{eq:dXY-folded}
		d_{\mathbf r} = \sum_{i=1}^{n-1}\bigl|\tilde D_i + N\delta_i\bigr|,
		\qquad \E[\tilde D_i] = 0,\ \Var(\tilde D_i) = \sigma_i^2,
	\end{equation}
	where (Lemma~\ref{lem:cov-Dj-multinomial}, under the sampling law $\mathbf p$)
	\begin{equation}\label{eq:sigma-i-dXY}
		\sigma_i^2 = \Var(D_i) = N\bigl[(p_i + p_{i+1}) - (p_i - p_{i+1})^2\bigr].
	\end{equation}

	\paragraph{Where the symmetry is lost} In $d_{\mathbf p}$ (\S\ref{ssec:tvc-setup}) we had
	$\mathbf r = \mathbf p$, so $\delta_i \equiv 0$ and each summand $|\tilde D_i|$
	is the absolute value of a \emph{zero-mean} quantity --- asymptotically
	half-normal, $\E|\tilde D_i| = \sqrt{2/\pi}\,\sigma_i\,(1+o(1))$. As soon as
	$\mathbf r \ne \mathbf p$, a \emph{deterministic shift}
	$N\delta_i$ appears, so $|\tilde D_i + N\delta_i|$ is the absolute value of a variable
	with \emph{nonzero} expectation. The two distributions enter \emph{asymmetrically}:
	$\mathbf p$ controls the fluctuation (the variance $\sigma_i^2$ and the
	conditional binomial parameter), while $\mathbf p$ and $\mathbf r$ together
	determine the mismatch shift.

% =====================================================================
\subsection{$d_{\mathbf r}$: exact computation at finite $N$}
	\label{ssec:dXY-exact}

	The exact mixture-conditioning formula from \S\ref{sssec:mixture-conditioning-general}
	carries over with the same outer variable $K_i=X_i+X_{i+1}$ and the same conditional
	binomial allocation.  The only change is the constant subtracted inside the absolute
	value: it is now the reference shift $N(r_{i+1}-r_i)$ rather than the sampling
	mean shift $N(p_{i+1}-p_i)$.

\begin{proposition}[Exact per-edge expression]\label{prop:dXY-exact}
	For edge $i$ with $p_i+p_{i+1}>0$, under
	$\mathbf X \sim \Mult(N; \mathbf p)$, let
	$K_i := X_i + X_{i+1} \sim \Bin(N, p_i + p_{i+1})$ and
	$q_i := p_{i+1}/(p_i + p_{i+1})$. Then
	\begin{equation}\label{eq:dXY-exact}
		\E\bigl|\tilde D_i + N\delta_i\bigr|
		= \sum_{k=0}^{N}\Pr(K_i = k)\,
		\E\bigl[\,\bigl|2 X_{i+1} - k - N(r_{i+1} - r_i)\bigr| \,\big|\, K_i = k\bigr],
	\end{equation}
	where $X_{i+1} \mid K_i = k \sim \Bin(k, q_i)$. The inner expectation is the
	translated two-bin expectation of Theorem~\ref{thm:F2-translated}, with conditional
	parameter $q_i$ and shift $t_i := N(r_{i+1} - r_i)$.
\end{proposition}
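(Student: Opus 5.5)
The plan is to repeat the argument of Proposition~\ref{prop:E-TVC-exact} verbatim, changing only the constant subtracted inside the absolute value. First, by the splitting \eqref{eq:dXY-split}, the summand is
\[
	\tilde D_i+N\delta_i=D_i-N(r_{i+1}-r_i)=X_{i+1}-X_i-t_i,\qquad t_i:=N(r_{i+1}-r_i).
\]
It therefore depends only on the pair $(X_i,X_{i+1})$. By Proposition~\ref{prop:k-marginal}, this pair has the three-category law $\Mult(N;p_i,p_{i+1},1-p_i-p_{i+1})$ under the sampling law $\mathbf p$. The reference law $\mathbf r$ enters only through the deterministic number $t_i$.

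Second, since $p_i+p_{i+1}>0$, Proposition~\ref{prop:2bin-factor} applies with $j=i$. It gives $K_i\sim\Bin(N,p_i+p_{i+1})$ and $X_{i+1}\mid K_i=k\sim\Bin(k,q_i)$ for every $k$ with $\Pr(K_i=k)>0$. On the event $\{K_i=k\}$ we have $X_i=k-X_{i+1}$, so
\[
	X_{i+1}-X_i-t_i=2X_{i+1}-k-N(r_{i+1}-r_i).
\]
The law of total expectation over the finitely many values $k=0,\dots,N$ then yields \eqref{eq:dXY-exact}. Values of $k$ with zero probability contribute nothing, so the sum may be written over all $k$; all quantities are bounded, so no integrability issue arises.

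Third, the inner conditional expectation is $\E|2Y-k-t_i|$ with $Y\sim\Bin(k,q_i)$. This is exactly the quantity evaluated in Theorem~\ref{thm:F2-translated} with $q=q_i$ and $t=t_i$, and that theorem allows an arbitrary real shift. At $k=0$ the term reduces to $|t_i|$, as in the matched case.

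There is no real obstacle. The only point needing care is bookkeeping: the conditional binomial parameter $q_i$ must come from the sampling law $\mathbf p$, while the shift must come from $\mathbf r$. This is not the sampling mean $N(p_{i+1}-p_i)$, so one should not substitute the centering of Proposition~\ref{prop:E-TVC-exact}. Once the identity $\tilde D_i+N\delta_i=D_i-t_i$ is written out, the rest is a direct transcription of the matched proof.
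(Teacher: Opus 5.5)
Your proposal is correct and follows the paper's proof step for step. You use the three-category pair marginal and Proposition~\ref{prop:2bin-factor}, rewrite $D_i-t_i=2X_{i+1}-k-t_i$ on $\{K_i=k\}$, apply the law of total expectation, and identify the result through \eqref{eq:dXY-split}. You also add harmless bookkeeping the paper leaves implicit: zero-probability values of $k$, and the $k=0$ term $|t_i|$.
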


	If $p_i+p_{i+1}=0$, then $X_i=X_{i+1}=0$ almost surely, the edge contribution is
	the deterministic value $N|r_{i+1}-r_i|$, and no mixture calculation is needed.

\begin{proof}
	The marginal of the pair $(X_i, X_{i+1})$ is the three-category law 
	(see Proposition~\ref{prop:k-marginal}):
	$\Mult(N; p_i,p_{i+1},1-p_i-p_{i+1})$.
	Consequently, Proposition~\ref{prop:2bin-factor} gives
	$K_i\sim\Bin(N,p_i+p_{i+1})$ and
	$X_{i+1}\mid K_i=k\sim\Bin(k,q_i)$. On the event $K_i=k$,
	$X_i=k-X_{i+1}$ and therefore
	\[
		D_i-N(r_{i+1}-r_i)
		=2X_{i+1}-k-N(r_{i+1}-r_i).
	\]
	Finally, condition on the disjoint events $\{K_i=k\}$ and apply the law of
	total expectation:
	\[
		\E|D_i-N(r_{i+1}-r_i)|
		=\sum_{k=0}^N\Pr(K_i=k)\,
		\E\bigl[|D_i-N(r_{i+1}-r_i)|\mid K_i=k\bigr].
	\]
	Since \eqref{eq:dXY-split} identifies
	$D_i-N(r_{i+1}-r_i)=\tilde D_i+N\delta_i$, this is exactly
	\eqref{eq:dXY-exact}.
\end{proof}

	\paragraph{Asymmetry in the formula} The conditional binomial parameter
	$q_i = p_{i+1}/(p_i+p_{i+1})$ comes from the sampling law $\mathbf p$ (since
	$X$ is distributed according to $\mathbf p$), and the translation $t_i = N(r_{i+1} - r_i)$ from
	the reference law $\mathbf r$. For $\mathbf r=\mathbf p$ it reduces to
	Proposition~\ref{prop:E-TVC-exact}; for $\mathbf r=\mathbf u$ it gives the uncentered
	$d_{\mathbf u}$ under $\mathbf p$.

% =====================================================================
\subsection{$d_{\mathbf r}$: folded-normal asymptotics}
	\label{ssec:dXY-folded}

	The central asymptotic object is the expectation of the absolute value of a
	normal variable with \emph{nonzero} mean (folded normal).

\begin{definition}[Folded-normal profile]\label{def:G-profile}
	We define
	\begin{equation}\label{eq:G-def}
		G(a) := \sqrt{\frac{2}{\pi}}\,e^{-a^2/2} + a\,\operatorname{erf}\!\Bigl(\frac{a}{\sqrt 2}\Bigr),
		\qquad a \in \mathbb R,
	\end{equation}
	where $\operatorname{erf}(x) = \tfrac{2}{\sqrt\pi}\int_0^x e^{-t^2}dt$.
\end{definition}

\begin{lemma}[Properties of $G$]\label{lem:G-props}
	The function $G$ has the following properties:
	\begin{enumerate}
		\item If $W\sim\mathcal N(m,v^2)$, then $\E|W|=v\,G(m/v)$.
		\item $G$ is \emph{even}: $G(-a) = G(a)$.
		\item $G(0) = \sqrt{2/\pi}$ (half-normal / $d_{\mathbf p}$ case).
		\item $G'(a) = \operatorname{erf}(a/\sqrt2)$, $G''(a) = \sqrt{2/\pi}\,e^{-a^2/2} > 0$
		(strictly convex).
		\item $G(a) = |a| + \sqrt{2/\pi}\,e^{-a^2/2} - |a|\operatorname{erfc}(|a|/\sqrt2)
		= |a| + O(e^{-a^2/2})$ for $|a| \to \infty$; more precisely, by the Mills
		expansion, $G(a)-|a|\sim 2\phi(a)/a^2$ as $|a|\to\infty$ (with $\phi(a)=(2\pi)^{-1/2}
		e^{-a^2/2}$ the standard normal density fixed above), a near-cancellation
		of the two displayed terms.
		\item $G(a) \ge \max\{\sqrt{2/\pi},\, |a|\}$.
	\end{enumerate}
\end{lemma}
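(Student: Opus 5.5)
The plan is to compute everything from one integral representation and then read off the six properties. For (i), write $W=m+vZ$ with $Z$ standard normal and $a:=m/v$. Then $\E|W|=v\,\E|Z+a|$, so it suffices to show $\E|Z+a|=G(a)$. Split the integral at $z=-a$:
\[
	\E|Z+a|=\int_{-a}^{\infty}(z+a)\phi(z)\,dz-\int_{-\infty}^{-a}(z+a)\phi(z)\,dz .
\]
Using $\int z\phi=-\phi$ and the standard normal CDF $\Phi$, the two pieces give $2\phi(a)+a\bigl(\Phi(a)-\Phi(-a)\bigr)$. The identities $2\phi(a)=\sqrt{2/\pi}\,e^{-a^2/2}$ and $\Phi(a)-\Phi(-a)=\operatorname{erf}(a/\sqrt2)$ then turn this into exactly \eqref{eq:G-def}. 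The degenerate case $v=0$ is excluded implicitly, or read as the limit $vG(m/v)\to|m|$ consistent with (v).

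Properties (ii)--(iv) are direct computations on \eqref{eq:G-def}.

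For (ii), $a\mapsto a\operatorname{erf}(a/\sqrt2)$ is a product of two odd functions, hence even, and $e^{-a^2/2}$ is even.

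For (iii), put $a=0$.

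For (iv), differentiate. The derivative of $\sqrt{2/\pi}\,e^{-a^2/2}$ is $-a\sqrt{2/\pi}\,e^{-a^2/2}$. The derivative of $a\operatorname{erf}(a/\sqrt2)$ is $\operatorname{erf}(a/\sqrt2)+a\cdot\tfrac{2}{\sqrt\pi}\tfrac1{\sqrt2}e^{-a^2/2}$. The exponential terms cancel, leaving $G'(a)=\operatorname{erf}(a/\sqrt2)$, and one more derivative gives $G''=\sqrt{2/\pi}\,e^{-a^2/2}>0$.

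For (v), by evenness it suffices to take $a>0$. Write $\operatorname{erf}=1-\operatorname{erfc}$ to get the displayed identity. Then apply the Mills ratio expansion $\operatorname{erfc}(a/\sqrt2)=2(1-\Phi(a))=2\phi(a)\bigl(a^{-1}-a^{-3}+O(a^{-5})\bigr)$. This gives
\[
	G(a)-a=2\phi(a)-2\phi(a)\bigl(1-a^{-2}+O(a^{-4})\bigr)=2\phi(a)a^{-2}\bigl(1+O(a^{-2})\bigr).
\]
This shows both the $O(e^{-a^2/2})$ bound and the sharper asymptotic, with the leading terms cancelling as noted. The only care needed is to carry the Mills expansion to the second term, since the first-order terms cancel. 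I expect this to be the main (mild) obstacle.

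For (vi), the bound $G(a)\ge|a|$ follows from (i) and Jensen's inequality: $\E|Z+a|\ge|\E(Z+a)|=|a|$. Alternatively, $G(a)-|a|=2\int_{|a|}^\infty(z-|a|)\phi(z)\,dz\ge0$. For $G(a)\ge\sqrt{2/\pi}$, note that $G$ is even and convex by (ii) and (iv), so $G'(0)=0$ makes $a=0$ a global minimum with value $G(0)=\sqrt{2/\pi}$ by (iii).
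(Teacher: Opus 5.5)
Your proposal is correct and follows the paper's proof essentially step for step. Both use the same split of $\E|Z+a|$ at $z=-a$ for (i), direct computation for (ii)--(iv), the substitution $\operatorname{erf}=1-\operatorname{erfc}$ plus the two-term Mills expansion with cancellation of the leading terms for (v), and Jensen's inequality together with evenness, convexity and $G(0)=\sqrt{2/\pi}$ for (vi). Your extra remarks are correct but not needed: the degenerate case $v=0$, and the representation $G(a)-|a|=2\int_{|a|}^{\infty}(z-|a|)\phi(z)\,dz$.
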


\begin{proof}
	(1) Write $W=v(Z+a)$ with $Z\sim\mathcal N(0,1)$ and
	$a=m/v$. Splitting the integral at $z=-a$ gives
	\[
	\begin{aligned}
		\E|Z+a|
		&=\int_{-a}^{\infty}(z+a)\phi(z)\,dz
		-\int_{-\infty}^{-a}(z+a)\phi(z)\,dz\\
		&=2\phi(a)+a\,[1-2\Phi_{\mathcal N}(-a)].
	\end{aligned}
	\]
	Indeed, $\int z\phi(z)\,dz=-\phi(z)$, $\phi(-a)=\phi(a)$, and $\Phi_{\mathcal N}$ denotes the standard
	normal distribution function.
	Now $2\phi(a)=\sqrt{2/\pi}e^{-a^2/2}$ and
	$1-2\Phi_{\mathcal N}(-a)=\operatorname{erf}(a/\sqrt2)$, so
	$\E|W|=vG(a)$. This is the classical folded-normal formula
	(Leone et al.~\cite{leone1961}).
	(2) $e^{-a^2/2}$ even, $a\operatorname{erf}(a/\sqrt2)$ even.
	(3) $G(0) = \sqrt{2/\pi}$.
	(4) By differentiation: $-a\sqrt{2/\pi}e^{-a^2/2} + \operatorname{erf}(a/\sqrt2) + a\sqrt{2/\pi}e^{-a^2/2}
	= \operatorname{erf}(a/\sqrt2)$, so $G'' = \sqrt{2/\pi}e^{-a^2/2} > 0$.
	(5) For $a\ge0$, substitute $\operatorname{erf}(a/\sqrt2)
	=1-\operatorname{erfc}(a/\sqrt2)$ in \eqref{eq:G-def}; evenness gives the
	displayed identity for all $a$. The standard Mills bound
	$\operatorname{erfc}(x)=O(e^{-x^2}/x)$ then gives the stated remainder.
	The sharper asymptotic in the statement follows by inserting the first term of
	the Mills expansion~\cite[\S7.12]{dlmf},
	\[
		\operatorname{erfc}(a/\sqrt2)
		=\frac{\sqrt2}{a\sqrt\pi}e^{-a^2/2}
		\left(1-\frac1{a^2}+O(a^{-4})\right),
		\qquad a\to+\infty.
	\]
	In $G(a)-a=\sqrt{2/\pi}e^{-a^2/2}-a\operatorname{erfc}(a/\sqrt2)$ the leading
	Gaussian terms cancel, leaving
	$2\phi(a)/a^2+O(\phi(a)/a^4)$.  This cancellation explains why, far from the
	matched regime, the folded-normal expectation is extremely close to the
	deterministic absolute shift.
	(6) Evenness, strict convexity and (3) give $G(a)\ge G(0)=\sqrt{2/\pi}$.
	Also, if $Z\sim\mathcal N(0,1)$, then
	$G(a)=\E|Z+a|\ge|\E(Z+a)|=|a|$ by Jensen's inequality. Since both lower bounds
	hold simultaneously, $G(a)\ge\max\{\sqrt{2/\pi},\,|a|\}$.
\end{proof}

	The next theorem is a fixed-pair statement: the sampling law $\mathbf p$ and the
	reference law $\mathbf r$ do not vary with $N$.  Thus each edge is either
	exactly matched, $\delta_i=0$, or has a fixed nonzero gradient mismatch.  The
	transitional scaling $\delta_i=O(N^{-1/2})$ is treated separately below.

\begin{theorem}[Asymptotic $\E(d_{\mathbf r})$]\label{thm:E-dXY}
	Let $n\ge3$ be fixed and let
	$\mathbf X\sim\Mult(N;\mathbf p)$, where $\min_i p_i\ge p_{\min}>0$.
	Let $\mathbf r$ be a fixed reference probability vector.  With
	$\delta_i$ as in \eqref{eq:dXY-split}, $\sigma_i$ as in
	\eqref{eq:sigma-i-dXY}, and $G$ as in \eqref{eq:G-def}, one has
	\begin{equation}\label{eq:E-dXY}
		\E(d_{\mathbf r})
		= \sum_{i=1}^{n-1}\sigma_i\,
		G\!\left(\frac{N\delta_i}{\sigma_i}\right)
		+ O(N^{-1/2}).
	\end{equation}
	More precisely, an edge with $\delta_i=0$ contributes an
	$O(N^{-1/2})$ absolute error, while an edge with fixed
	$\delta_i\ne0$ contributes only $O(e^{-\gamma_iN})$, for some $\gamma_i>0$.
\end{theorem}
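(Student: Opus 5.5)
By linearity, $\E(d_{\mathbf r})=\sum_i\E|\tilde D_i+N\delta_i|$, so the proof will be edgewise, split according to whether $\delta_i=0$ or $\delta_i\ne0$. The hypotheses $n\ge3$ and $\min_ip_i\ge p_{\min}$ put each edge triple $(p_i,p_{i+1},1-p_i-p_{i+1})$ in a compact subset of the open simplex, exactly as in the proof of Theorem~\ref{thm:E-TVC}.

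\emph{Matched edges ($\delta_i=0$).} Here $p_{i+1}-p_i=r_{i+1}-r_i$, so the summand is $|\tilde D_i|$ under $\mathbf p$. Lemma~\ref{lem:E-D1-asymp-general} gives $\E|\tilde D_i|=\sqrt{2/\pi}\,\sigma_i[1+c_i(N)/N+O(N^{-2})]$. Since $G(0)=\sqrt{2/\pi}$ (Lemma~\ref{lem:G-props}(3)) and $c_i(N)$ is bounded uniformly in the lattice phase, the error against $\sigma_iG(0)$ is $\sqrt{2/\pi}\,\sigma_i|c_i(N)|/N=O(N^{-1/2})$. This is the stated $O(N^{-1/2})$ contribution; it is genuinely of that order because of the oscillating lattice term.

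\emph{Mismatched edges ($\delta_i\ne0$ fixed).} Take $\delta_i>0$; the other sign is symmetric. Write $|\tilde D_i+N\delta_i|=(\tilde D_i+N\delta_i)+2(\tilde D_i+N\delta_i)^-$. The first term has expectation exactly $N\delta_i$. The negative part vanishes unless $\tilde D_i\le-N\delta_i$, and it is bounded by $|\tilde D_i|+N\delta_i\le 4N$. Since $D_i$ is a sum of $N$ independent increments in $\{-1,0,1\}$, Hoeffding's inequality gives
\[
\Pr(\tilde D_i\le -N\delta_i)\le e^{-N\delta_i^2/2},
\]
so $\E|\tilde D_i+N\delta_i|=N|\delta_i|+O(Ne^{-N\delta_i^2/2})$. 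On the Gaussian side, Lemma~\ref{lem:G-props}(5) with $a=N\delta_i/\sigma_i=\sqrt N\,\delta_i/s_i$ gives $\sigma_iG(a)=N|\delta_i|+O(\sigma_i e^{-a^2/2})=N|\delta_i|+O(\sqrt N\,e^{-N\delta_i^2/(2s_i^2)})$. Subtracting the two expansions, the difference is $O(e^{-\gamma_iN})$ for any $\gamma_i<\delta_i^2\min(1/2,1/(2s_i^2))$, the polynomial prefactor being absorbed by shrinking $\gamma_i$.

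\emph{Summation.} Over the fixed $n-1$ edges, the matched edges contribute $O(N^{-1/2})$ and the mismatched ones exponentially small errors, which gives \eqref{eq:E-dXY}.

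The only delicate point is the bookkeeping in the mismatched case. There is no local-limit error to control there, because both the exact expectation and the folded-normal value equal the same deterministic term $N|\delta_i|$ up to exponentially small tails. The key is not to compare the distributions through a CLT, which would only give $O(1)$ accuracy, but to compare each side separately with $N|\delta_i|$.
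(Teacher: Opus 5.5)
Your proposal is correct and follows the paper's proof essentially step for step: matched edges are handled by Lemma~\ref{lem:E-D1-asymp-general} with $G(0)=\sqrt{2/\pi}$, and fixed-mismatch edges by the identity $|x|=x+2x^-$, Hoeffding's bound $e^{-N\delta_i^2/2}$ on the wrong-sign event, the crude $O(N)$ bound on the negative part, and Lemma~\ref{lem:G-props}(5) on the Gaussian side. Your explicit rate $\gamma_i<\delta_i^2\min\bigl(1/2,1/(2s_i^2)\bigr)$ simply makes concrete the paper's step of shrinking the exponent to absorb the polynomial prefactor.
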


\begin{remark}[Scope of the fixed-pair formula]
	Only the sampling law $\mathbf p$ must be interior.  The vector $\mathbf r$ may lie
	on the boundary of the simplex, because the variance $\sigma_i^2$ and the
	conditional binomial parameter in the proof depend only on $\mathbf p$.
	The exponential constant $\gamma_i$ depends on $|\delta_i|$; a statement uniform in $\mathbf r$,
	covering the standardized mismatch $N\delta_i/\sigma_i=\Theta(\sqrt N\,\delta_i)$ in every regime,
	is Theorem~\ref{thm:dXY-subleading}. The case $n=2$ is excluded here because its single
	increment has span $2$; it is handled in \S\ref{ssec:dXY-n2} and in~\cite{elezovic_folded}. The
	fixed-mismatch half holds verbatim at $n=2$ (it uses only Hoeffding), as the explicit formula
	of \S\ref{ssec:dXY-n2} confirms; only the matched half needs the span-one lattice.
\end{remark}

\begin{proof}
	By linearity it is enough to treat the summand $\E|\tilde D_i + N\delta_i|$.
	Write $Z_N:=\tilde D_i/\sigma_i$ and $a_i:=N\delta_i/\sigma_i$ (not the edge probability $a$ of
	\S\ref{sec:E-D1-detailed}), so that
	$|\tilde D_i + N\delta_i| = \sigma_i|Z_N + a_i|$.
	The normal approximation suggests replacing $Z_N$ by a standard normal $Z$,
	which would give exactly $\sigma_i\E|Z+a_i|=\sigma_iG(a_i)$.  The proof below
	only has to justify that this replacement has the stated error in the two fixed
	regimes.  The local regime, where $a_i$ remains bounded but nonzero because
	$\delta_i$ depends on $N$, is deliberately excluded here and handled in
	Theorem~\ref{thm:dXY-subleading}.

	Because $\mathbf p,\mathbf r$ are fixed, each edge is in exactly one of two
	regimes: either $\delta_i=0$ exactly, or $|\delta_i|$ is a fixed positive number.
	The intermediate scaling $\delta_i=O(N^{-1/2})$ is not part of this fixed-pair
	theorem and is treated afterwards.

	\emph{Matched edge, $\delta_i=0$.} Then $a_i=0$, and
	Lemma~\ref{lem:E-D1-asymp-general} gives
	$\E|\tilde D_i|=\sigma_iG(0)+O(N^{-1/2})$: the relative correction is
	$O(N^{-1})$, since the third Edgeworth term vanishes by oddness.
	Here the absolute error is $O(\sigma_i/N)=O(N^{-1/2})$, because the leading
	scale is $\sigma_i=O(\sqrt N)$.  This is the only source of the displayed
	$O(N^{-1/2})$ error when all edges are matched.

	\emph{$|a_i| \to \infty$ (fixed mismatch).} Put $\nu_i := N\delta_i$ and
	$\varepsilon_i:=\operatorname{sgn}(\delta_i)$. Then
	$\varepsilon_i(\tilde D_i+\nu_i)$ has mean $|\nu_i|$, and applying
	$|x|=x+2x^-$ to this signed variable gives the identity
	\[
		\E|\tilde D_i+\nu_i|
		=|\nu_i|+2\E\bigl[(\varepsilon_i(\tilde D_i+\nu_i))^-\bigr].
	\]
	The negative part is nonzero only if
	$\varepsilon_i\tilde D_i<-|\nu_i|=-N|\delta_i|$. Since $\tilde D_i$ is a
	sum of $N$ independent centered, bounded increments and the threshold is a fixed
	positive multiple of $N$, Hoeffding's inequality~\cite{hoeffding1963} gives,
	for example,
	\[
		\Pr\{\varepsilon_i\tilde D_i<-N|\delta_i|\}
		\le \exp\!\left(-\frac{N\delta_i^2}{2}\right).
	\]
	Indeed, each one-trial increment of $D_i$ lies in $\{-1,0,1\}$, and centering
	does not change the length of its range, which is at most $2$. Thus the sum of
	the squared range lengths in Hoeffding's bound is at most $4N$. The negative
	part itself is bounded by a constant times $N$, so its expectation is
	$O(Ne^{-cN})=O(e^{-c'N})$ after decreasing the exponent. Therefore
	$\E|\tilde D_i + N\delta_i| = N|\delta_i| + O(e^{-cN})$, which matches
	$\sigma_i G(a_i) = N|\delta_i| + O(\sigma_i e^{-a_i^2/2})
	= N|\delta_i|+O(e^{-c'N})$ (Lemma~\ref{lem:G-props}(5), with a possibly
	smaller $c'>0$, since $a_i^2$ is proportional to $N$ and the polynomial factor
	$\sigma_i=O(\sqrt N)$ is absorbed into the exponential).
	Thus a fixed mismatched edge is much easier than a matched one: the sign of
	$\tilde D_i+N\delta_i$ is wrong only with exponentially small probability, so the
	absolute value behaves like the deterministic quantity $N|\delta_i|$.
	Summing the finitely many edgewise remainders proves (\ref{eq:E-dXY}).
\end{proof}

	\medskip\noindent\emph{The uncentered statistic.} For the uncentered statistic $d_{\mathbf u}$ (the
	case $\mathbf r=\mathbf u$ of Theorem~\ref{thm:E-dXY}) and a sampling law with all
	$g_i:=p_{i+1}-p_i\neq0$, the theorem gives
	$\E(d_{\mathbf u})=N\sum_i|g_i|+O(e^{-cN})$, and its proof shows more: off an event of
	probability at most $\sum_ie^{-Ng_i^2/2}$, every $|D_i|$ equals $\sgn(g_i)D_i$, so the uncentered
	statistic coincides with the linear functional $\sum_i\sgn(g_i)D_i$ of the counts. This is why
	the counts, and not the statistic, must be centered at a non-uniform null $\mathbf r$ (all
	$r_{i+1}\ne r_i$). Subtracting a constant from $d_{\mathbf u}$ leaves a signed sum; under
	$\mathbf p=\mathbf r\pm\mathbf e$, for $\mathbf e$ small enough that no $g_i$ changes sign, its mean
	moves by $\pm N\sum_i\sgn(r_{i+1}-r_i)(e_{i+1}-e_i)$, up for one sign of the departure and down for
	the other, whereas $d_{\mathbf r}$ has the same leading term $N\sum_i|\delta_i|$ for both signs
	(\S\ref{ssec:dXY-asymmetry}). For $d_{\mathbf u}$ the theorem also shows that the slack $\sum_i\sigma_i$ of
	the Jensen upper bound $\E\|X\|_V\le N\sum_i|p_{i+1}-p_i|+\sum_i\sigma_i$
	of~\cite[Eq.~(36)]{banic_elezovic_tvor} is absent when all $g_i\ne0$:
	$\E\|X\|_V=N\sum_i|p_{i+1}-p_i|+O(e^{-cN})$.

\begin{remark}[Local alternatives]
	If $\mathbf r=\mathbf r_N$ is allowed to depend on $N$ so that
	$a_i=N\delta_i/\sigma_i\to a\ne0$, the off-center kink prevents the third
	Edgeworth term from vanishing. The leading folded-normal formula remains valid,
	but its generic remainder is $O(1)$ absolutely, or $O(N^{-1/2})$ relatively,
	rather than the $O(N^{-1/2})$ absolute remainder in
	Theorem~\ref{thm:E-dXY}.
\end{remark}

	The next theorem records this transitional regime in a form that is uniform in the mismatch.
	It is not needed for the fixed-pair asymptotics above, but it explains what replaces the
	matched expansion when the standardized mismatch stays bounded.

\begin{theorem}[Sub-leading term under local alternatives]\label{thm:dXY-subleading}
	Fix an edge $i$ with $p_i,p_{i+1}>0$ and $p_i+p_{i+1}<1$; in particular this is
	a span-one edge, not the exceptional \(n=2\) case.  Put
	\[
		\mu_i=p_{i+1}-p_i,\qquad
		\tau_i=p_i+p_{i+1},\qquad
		s_i^2=\tau_i-\mu_i^2,
	\]
	and let
	\[
		\lambda_{3,i}=\frac{\kappa_{3,i}}{s_i^3},
		\qquad
		\kappa_{3,i}=\mu_i(1-3\tau_i+2\mu_i^2)
	\]
	be the standardized third cumulant of the one-trial increment. Let $\mathbf r$ be any
	reference probability vector, possibly depending on $N$, and put
	\[
		a_i:=\frac{N\delta_i}{\sigma_i}.
	\]
	Then, with $\phi(z)=(2\pi)^{-1/2}e^{-z^2/2}$ and $G$ as in
	\eqref{eq:G-def},
	\begin{equation}\label{eq:dXY-subleading}
		\E\,|\tilde D_i+N\delta_i|
		= \sigma_i\,G(a_i) - \frac{\lambda_{3,i}}{3}\,s_i\,a_i\,\phi(a_i) + O(N^{-1/2}).
	\end{equation}
	The remainder is uniform in $\mathbf r$ (equivalently, in $a_i\in\mathbb R$); on an edge with fixed
	$\delta_i\ne0$ both $\sigma_iG(a_i)-N|\delta_i|$ and the correction term are exponentially small,
	so \eqref{eq:E-dXY} is recovered there, with the cruder remainder $O(N^{-1/2})$ in place of the
	exponentially small one of Theorem~\ref{thm:E-dXY}.
\end{theorem}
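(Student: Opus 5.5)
The plan is to repeat the single-edge analysis of \S\ref{sec:E-D1-detailed}, but with the kink moved off the centre of the Edgeworth profile and with the expansion kept only to relative order $N^{-1/2}$. Fix the edge, suppress the index $i$, and write $x_0:=N(r_{i+1}-r_i)=N\mu-N\delta$. Then $z_{x_0}=-a$ with $a=N\delta/\sigma$, and
\[
\E|\tilde D+N\delta|=\sum_{m\in\Z}|m-x_0|\Pr(D_N=m).
\]
Since $\mathbf r$ lies in the simplex, $|\delta|\le 2$, so $|a|\le C\sqrt N$ in every regime. All bounds below are to be checked against this worst case, because that is what makes the remainder uniform in $\mathbf r$.

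\emph{Step 1 (replace the lattice law by the Edgeworth approximant).} Apply Lemma~\ref{lem:weighted-local-edgeworth}. On the lattice, $|m-x_0|=\sigma|z_m+a|\le\sigma(|z_m|+|a|)$. Summing the weighted error exactly as in Step~1 of \S\ref{sec:E-D1-detailed} gives an absolute error
\[
O\bigl(\sigma N^{-3}(1+|a|)\bigr)=O(N^{-2}),
\]
uniformly in $a$.

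\emph{Step 2 (corner Euler--Maclaurin).} Apply Lemma~\ref{lem:corner-EM} to $g(x)=|x-x_0|f_N(x)$ with $K=2$. The corner term is $-\tilde B_2(x_0)f_N(x_0)=O(1/\sigma)$. The remainder is bounded by $\tfrac12\|\tilde B_2\|_\infty\int|g''|$, and the two smooth pieces of $g''$ are $\pm\bigl(2f_N'+(x-x_0)f_N''\bigr)$, whose $L^1$ norm is $O(1/\sigma)$. This bound is uniform in $x_0$: $\int|f_N'|=O(\sigma^{-1})$, and $\int|x-x_0||f_N''|\le\int|x-N\mu||f_N''|+N|\delta|\,|f_N''(\cdot)|_{L^1}$. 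The second piece is dangerous, since it is of size $|a|\sigma\cdot\sigma^{-2}$. I would instead keep only the corner term and bound the remainder after a further integration by parts, or, more simply, note that for $|a|\ge N^{1/8}$ everything, including the claimed correction, is within $O(e^{-cN^{1/4}})$ of $N|\delta|$ by the Hoeffding argument of Theorem~\ref{thm:E-dXY}. So Step~2 need only handle $|a|\le N^{1/8}$, where the cost is $O(N^{1/8}/\sigma)$. To recover the full $O(N^{-1/2})$, I would take $K=3$ and use the fact that $g''$ has a second jump term controlled by $f_N'(x_0)$. This is the step I expect to be the main technical obstacle: getting a remainder that is honestly uniform in $a$ requires splitting into a bounded-$a$ range, where Euler--Maclaurin is used, and a large-$a$ range, where a tail/Hoeffding argument is used.

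\emph{Step 3 (smooth integral).} Substitute $x=N\mu+\sigma z$:
\[
\int g=\sigma\int|z+a|\,\phi(z)\Bigl[1+\tfrac{\lambda_3}{6\sqrt N}H_3(z)+\tfrac1N P_2(z)+\cdots\Bigr]dz .
\]
The zeroth term is $\sigma G(a)$ by Lemma~\ref{lem:G-props}(1). For the others, use $H_k\phi=(-1)^k\phi^{(k)}$ and $(|z+a|)''=2\delta_{-a}$ (Lemma~\ref{lem:kink-detailed}). Integrating by parts twice gives, for $k\ge2$,
\[
\int|z+a|H_k\phi\,dz=2(-1)^k\phi^{(k-2)}(-a)=2H_{k-2}(-a)\phi(a),
\]
which is bounded uniformly in $a$. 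With $k=3$ this is $-2a\phi(a)$, so the $N^{-1/2}$ term equals
\[
\sigma N^{-1/2}\,\tfrac{\lambda_3}{6}\,(-2a\phi(a))=-\tfrac{\lambda_3}{3}\,s\,a\,\phi(a),
\]
which is the displayed correction. The $P_2$ and higher terms contribute $\sigma N^{-1}\cdot O(1)=O(N^{-1/2})$, uniformly in $a$, by the same identity.

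\emph{Step 4 (assembly).} Adding Steps~1--3 gives \eqref{eq:dXY-subleading} with a remainder $O(N^{-1/2})$ that is uniform in $a$. Finally, for fixed $\delta\ne0$ both $\sigma G(a)-N|\delta|$ and $a\phi(a)$ are $O(\sqrt N e^{-cN})$ by Lemma~\ref{lem:G-props}(5), which recovers \eqref{eq:E-dXY} on such edges.
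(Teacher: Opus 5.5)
Your proposal is correct and follows essentially the paper's route. You replace the lattice law by the Edgeworth approximant via Lemma~\ref{lem:weighted-local-edgeworth} with the shifted weight, which gives $O(\sigma N^{-3}(1+|a|))=O(N^{-2})$ since $|\delta|\le2$. You apply the corner Euler--Maclaurin formula at the off-center kink, and you evaluate the smooth integral through $\int|z+a|H_k\phi\,dz=2H_{k-2}(-a)\phi(a)$, whose $k=3$ case yields $-\tfrac{\lambda_3}{3}s\,a\,\phi(a)$. Your explicit check of the Euler--Maclaurin integral remainder is more careful than the paper, which only bounds the corner terms. The Hoeffding split is unnecessary, however: with $K=3$ the remainder is $O(\sigma^{-2}(1+|a|))=O(N^{-1/2})$ uniformly for all $|a|\le C\sqrt N$.
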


	The correction term in \eqref{eq:dXY-subleading} is odd in $a_i$. It vanishes at
	the matched value $a_i=0$, where the expansion of
	Lemma~\ref{lem:E-D1-asymp-general} resumes, and it decays like
	$a_i\phi(a_i)$ as $|a_i|\to\infty$. The displayed formula records the full
	\(O(1)\) correction in the local regime; the next \(O(N^{-1/2})\) term contains
	the usual Bernoulli-polynomial lattice contribution, now evaluated at the
	off-center kink \(N(r_{i+1}-r_i)\).

	\medskip\noindent\emph{Pitman alternatives.} Since Lemma~\ref{lem:weighted-local-edgeworth} is uniform
	over compact sets of interior edge laws, the theorem also holds when the sampling law
	$\mathbf p=\mathbf p_N$ varies in a compact subset of the interior of the simplex --- in particular for
	Pitman alternatives $\mathbf p_N=\mathbf r+\mathbf h/\sqrt N$ with the null $\mathbf r$ fixed --- provided
	$\sigma_i$, $s_i$ and $\lambda_{3,i}$ are evaluated at $\mathbf p_N$. Expressed through the null values
	$s_i(\mathbf r)$, the leading term acquires the additional $O(1)$ contribution
	$2\phi(a_i)\,\nabla s_i(\mathbf r)\cdot\mathbf h$, which is of the same order as the skewness term
	(on the running edge it is about three times larger for $\mathbf h=(0.5,-1)$; the ratio depends on
	$\mathbf h$). In the two $O(1)$ terms $a_i$ may be evaluated at $\mathbf p_N$ or at $\mathbf r$, since
	the two differ by $O(N^{-1/2})$; in the leading term it must be taken together with $\sigma_i$,
	either $\sigma_i(\mathbf p_N)G(a_i(\mathbf p_N))$ or $\sigma_i(\mathbf r)G(a_i(\mathbf r))$ plus the
	term above.

\begin{proof}
	Standardize $Z_N=\tilde D_i/\sigma_i$ (mean $0$, variance $1$, span-$1$ lattice),
	so $\E|\tilde D_i+N\delta_i|=\sigma_i\,\E|Z_N+a_i|$. The weighted local estimate
	used in \S\ref{sec:E-D1-detailed} applies with the shifted weight $|z+a_i|$ as
	well --- for bounded $a_i$ because $|z+a_i|\le C(1+|z|)$ on such sets of shifts, and for all
	$a_i\in\mathbb R$ by the uniformity paragraph below --- and the grid-shift uniformity of Step~1
	of \S\ref{sec:E-D1-detailed} applies verbatim. Thus the exact lattice expectation may again be replaced by the
	Edgeworth smooth sum up to the stated remainder. The kink of $|z+a_i|$ now sits at
	$z=-a_i$ rather than at the center. The smooth integral of the weighted local
	expansion (\S\ref{sec:E-D1-detailed}, Step~3) gives, with $H_3(z)=z^3-3z$,
	\[
		\int_{\mathbb R}|z+a_i|\,\phi(z)\Bigl[1+\frac{\lambda_{3,i}}{6\sqrt N}H_3(z)+O(N^{-1})\Bigr]dz
		= G(a_i)+\frac{\lambda_{3,i}}{6\sqrt N}\,J_3(a_i)+O(N^{-1}),
	\]
	where the last $O(N^{-1})$ is uniform in $a_i$ (see the uniformity paragraph
	below): the omitted Edgeworth profiles are Gaussian times fixed polynomials, and
	their integrals against $|z+a_i|$ are uniformly bounded. Here
	$J_3(a)=\int_{\mathbb R}|z+a|H_3(z)\phi(z)\,dz$. Since $H_3\phi=-\phi'''$ and,
	in the distributional sense, $(|z+a|)''=2\delta_{-a}$, hence
	$(|z+a|)'''=2\delta'_{-a}$, three integrations by parts give
	\[
	\begin{aligned}
		J_3(a)
		&=-\!\int|z+a|\,\phi'''(z)\,dz
		=\bigl\langle (|\,\cdot\,+a|)''',\phi\bigr\rangle\\
		&=2\langle\delta'_{-a},\phi\rangle
		=-2\phi'(-a)=-2a\,\phi(a).
	\end{aligned}
	\]
	The sign in the second equality follows from three integrations by parts:
	$-\int f\phi'''=\langle f''',\phi\rangle$; the next equality uses
	$\langle\delta'_{x_0},\phi\rangle=-\phi'(x_0)$. Finally
	$\phi'(-a)=a\phi(a)$ because $\phi'(z)=-z\phi(z)$ and $\phi$ is even.
	This is the only new term compared with the matched case.  When $a=0$, the kink
	is back at the center and the integral of the odd Edgeworth polynomial vanishes;
	when $a\ne0$, the kink is off-center and the odd part has a nonzero pairing with
	the absolute-value profile.
	The smooth part therefore equals
	$\sigma_i\bigl[G(a_i)-\tfrac{\lambda_{3,i}}{3\sqrt N}a_i\phi(a_i)\bigr]$, and
	$\sigma_i/\sqrt N=s_i$ produces the stated $O(1)$ term.

	\emph{Uniformity in all of $\mathbb R$.} Since $|\delta_i|\le2$, the Step-1 error with the weight
	$|m-t_i|\le\sigma_i(|z_m|+|a_i|)$ is $O(\sigma_i(1+|a_i|)N^{-3})=O(N^{-2})$; the Edgeworth
	integrals are $\int|z+a|H_k\phi=2H_{k-2}(-a)\phi(a)$ for $k\ge2$ (integration by parts, as for
	$J_3$), bounded in $a$; and the corner terms below are polynomials in $a_i$ times $\phi(a_i)$.

	For completeness, the corner terms are obtained exactly as in
	Step~4 of \S\ref{sec:E-D1-detailed}, but the kink in the unstandardized
	variable now sits at
	$t_i=N(r_{i+1}-r_i)$ because the absolute value is
	$|D_i-t_i|$. The leading smooth profile evaluated at that kink is
	\[
		f_N(t_i)=\frac1{\sigma_i}\phi\!\left(
		\frac{t_i-N(p_{i+1}-p_i)}{\sigma_i}\right)+O(N^{-1})
		=\frac{\phi(a_i)}{\sigma_i}+O(N^{-1}),
	\]
	where the \(O(N^{-1})\) is absolute and uniform in \(a_i\).  We used
	$t_i-N(p_{i+1}-p_i)=-N\delta_i=-\sigma_i a_i$ and the evenness of
	$\phi$. The kink-transform and Bernoulli-series calculation therefore give the
	off-center lattice correction
	\[
		-\frac{\phi(a_i)}{\sigma_i}B_2(\{t_i\})=O(N^{-1/2}).
	\]
	The
	order-$N^{-1}$ Edgeworth terms contribute $\sigma_i\cdot O(N^{-1})=O(N^{-1/2})$;
	both fall into the remainder.
\end{proof}

\begin{corollary}[Mixed regimes under local alternatives]\label{cor:dXY-mixed}
	Fix \(n\ge3\) and an interior sampling law \(\mathbf p\) with \(\min_i p_i>0\).
	Let $\mathbf r=\mathbf r_N$ be a probability vector for each $N$ (matched edges, $a_i=0$, and
	fixed mismatches included). Then
	\[
		\E(d_{\mathbf r})
		=\sum_{i=1}^{n-1}\sigma_i\,G(a_i)
		-\frac13\sum_{i=1}^{n-1}\lambda_{3,i}\,s_i\,a_i\,\phi(a_i)
		+O(N^{-1/2}).
	\]
\end{corollary}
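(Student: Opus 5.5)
The plan is to sum the per-edge result of Theorem~\ref{thm:dXY-subleading} over the $n-1$ edges. Linearity of expectation gives
\[
\E(d_{\mathbf r})=\sum_{i=1}^{n-1}\E\,|\tilde D_i+N\delta_i|,
\]
and \eqref{eq:dXY-subleading} evaluates each summand with remainder $O(N^{-1/2})$. Since $n$ is fixed, the finitely many edgewise remainders add to $O(N^{-1/2})$. The only things to check are that the hypotheses of Theorem~\ref{thm:dXY-subleading} hold on every edge, and that its remainder is uniform along the whole sequence $\mathbf r_N$, whatever mixture of regimes that sequence produces.

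First I verify the edge hypotheses. Because $\min_ip_i>0$, both $p_i$ and $p_{i+1}$ are positive. Because $n\ge3$, the complement mass $1-p_i-p_{i+1}$ contains at least one other bin, so $p_i+p_{i+1}<1$. Every edge is therefore a span-one edge, exactly as in the proof of Theorem~\ref{thm:E-TVC}. The sampling law $\mathbf p$ does not vary with $N$, so $\sigma_i$, $s_i$ and $\lambda_{3,i}$ are the fixed quantities of Theorem~\ref{thm:dXY-subleading}.

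Second, and this is the only substantive point, I must handle a reference $\mathbf r_N$ that may put different edges into different regimes, and may even switch a given edge between regimes as $N$ changes: $a_i=0$, $a_i$ bounded but nonzero, or $|a_i|\to\infty$ with $\delta_i$ fixed. I will not split into cases. Instead I invoke the uniformity clause of Theorem~\ref{thm:dXY-subleading}: its remainder is uniform in $\mathbf r$, equivalently in $a_i\in\mathbb R$. That uniformity rests on three facts established in its proof: the bound $|\delta_i|\le2$, the grid-shift uniformity of Step~1 of \S\ref{sec:E-D1-detailed}, and the bounded Edgeworth integrals $\int|z+a|H_k\phi=2H_{k-2}(-a)\phi(a)$. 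Because of it, for each fixed edge there is a constant $C_i$ with
\[
\bigl|\E|\tilde D_i+N\delta_i|-\sigma_iG(a_i)+\tfrac13\lambda_{3,i}s_ia_i\phi(a_i)\bigr|\le C_iN^{-1/2}
\]
for all $N$ and all $\mathbf r_N$.

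Summing over $i$ gives the displayed formula with constant $\sum_iC_i$. As a consistency check, a matched edge has $a_i=0$, so its correction term vanishes, in agreement with Theorem~\ref{thm:E-TVC}. A fixed-mismatch edge has $a_i\phi(a_i)$ exponentially small, in agreement with Theorem~\ref{thm:E-dXY}. Neither check is needed for the proof; they only confirm that the corollary interpolates the earlier results.
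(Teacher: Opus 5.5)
Your proposal is correct and is essentially the paper's proof. Both apply Theorem~\ref{thm:dXY-subleading} edge by edge, use $\min_ip_i>0$ and $n\ge3$ to place every edge in its span-one hypotheses, and add the $n-1$ remainders, which is legitimate because that theorem's $O(N^{-1/2})$ is uniform in $\mathbf r$. You make that uniformity explicit, whereas the paper only remarks that on matched edges the correction term vanishes and Theorem~\ref{thm:E-dXY} is recovered, a point you rightly treat as a consistency check rather than a needed step.
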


\begin{proof}
	Apply Theorem~\ref{thm:dXY-subleading} edge by edge: on matched edges the
	correction term vanishes ($a_i=0$) and the statement reduces to the matched case
	of Theorem~\ref{thm:E-dXY}; the number of edges is fixed, so the edgewise
	$O(N^{-1/2})$ remainders sum to $O(N^{-1/2})$. This is the configuration met under
	local alternatives: some edges matched, others carrying a bounded standardized mismatch (for
	the Pitman form see the paragraph after Theorem~\ref{thm:dXY-subleading}).
\end{proof}

\begin{remark}[Numerical confirmation]\label{rem:dXY-subleading-num}
	For the running edge $(p_i,p_{i+1})=(0.1,0.3)$ of \S\ref{sec:E-TVC-asymp}
	($\lambda_{3,i}=-1/9$, $s_i=0.6$) under the transitional scaling
	$\delta_i=\alpha s_i/\sqrt N$ (so $a_i\equiv\alpha$), the leading folded-normal $\sigma_iG(\alpha)$
	misses a fixed $O(1)$ amount accounted for by $-\tfrac13\lambda_{3,i}s_i\alpha\phi(\alpha)$
	($=0.00538$ at $\alpha=1$), and the corrected formula \eqref{eq:dXY-subleading} has a residual
	decaying as $N^{-1/2}$; the reference values are computed from the exact mixture of
	Proposition~\ref{prop:dXY-exact} (no Monte Carlo error). For $\alpha=1$, with signed
	residuals (exact minus formula):
	\begin{center}\small
	\begin{tabular}{r|c|c|c}
		$N$ & exact $-\,\sigma_iG(\alpha)$ & residual of \eqref{eq:dXY-subleading} & after adding the lattice term \\\hline
		$100$     & $-1.4\cdot10^{-3}$ & $-6.8\cdot10^{-3}$ & $-3\cdot10^{-5}$ \\
		$1600$    & $+3.7\cdot10^{-3}$ & $-1.7\cdot10^{-3}$ & $-4\cdot10^{-6}$ \\
		$25\,600$ & $+5.0\cdot10^{-3}$ & $-4.2\cdot10^{-4}$ & $-7\cdot10^{-7}$ \\
		$1001$    & $+5.4\cdot10^{-3}$ & $+5.4\cdot10^{-5}$ & $+1.4\cdot10^{-5}$ \\
	\end{tabular}
	\end{center}
	Along $N=100\cdot4^k$ the kink $t_i=0.2N-0.6\sqrt N$ is an integer, so the residual is, to
	within half a percent, the frozen off-center lattice term $-\phi(\alpha)B_2(0)/\sigma_i$
	computed in the proof, halving with each quadrupling of $N$; at generic $N$ (last row) it
	oscillates in sign with $\{t_i\}$ but stays $O(N^{-1/2})$, and adding
	$-\phi(\alpha)B_2(\{t_i\})/\sigma_i$ removes most of it. The first column converges to the
	$O(1)$ correction; the leading formula is the more accurate one while the lattice term exceeds
	half of that correction, i.e.\ for $N<625$ along this sequence, so the corrected formula
	overtakes it only from the row $N=1600$ on.
\end{remark}

% =====================================================================
\subsection{$d_{\mathbf r}$: three regimes and the law-of-large-numbers limit}
	\label{ssec:dXY-regimes}

	The standardized mismatch $a_i = N\delta_i/\sigma_i = \Theta(\sqrt N\,\delta_i)$,
	so the contribution of an edge depends on the scaling of $\delta_i$ with $N$.
	The table classifies the three \emph{canonical} scalings; it is exhaustive for
	a fixed pair $(\mathbf p,\mathbf r)$, where only the first and third rows can
	occur, while the middle row requires an $N$-dependent array
	$\mathbf r=\mathbf r_N$:
	\begin{center}
	\begin{tabular}{l|l|l|l}
		regime & $\delta_i$ & $a_i$ & edge contribution \\\hline
		\emph{matched} & $0$ & $0$ & $\sqrt{2/\pi}\,\sigma_i = \Theta(\sqrt N)$ \\
		\emph{local} & $\Theta(1/\sqrt N)$ & $\Theta(1)$ & $\sigma_i G(a_i) = \Theta(\sqrt N)$ \\
		\emph{fixed mismatch} & $\Theta(1)$ & $\Theta(\sqrt N) \to \infty$ & $N|\delta_i| = \Theta(N)$ \\
	\end{tabular}
	\end{center}
	For a fixed pair the dichotomy is clean: the leading order is $\Theta(N)$ as
	soon as any edge has a nonzero mismatch, and $\Theta(\sqrt N)$ otherwise.

\begin{remark}[Intermediate scalings]\label{rem:dXY-intermediate}
	Once $N$-dependent arrays are admitted, the table has an intermediate regime
	between its rows: for $\delta_i\asymp N^{-\beta}$ with $0<\beta<\tfrac12$ one
	has $a_i\to\infty$ while $\delta_i\to0$, and the edge contributes
	$\Theta(N^{1-\beta})$ --- strictly between $\Theta(\sqrt N)$ and $\Theta(N)$. (For
	$\beta>\tfrac12$ one has $a_i\to0$ and the edge behaves as matched to first order.)
	This regime is still deterministic to first order, but the wrong-sign part has
	to be estimated slightly more carefully than in the fixed-mismatch proof, where the
	crude bound $O(N)$ on the negative part sufficed because $|\delta_i|$ was fixed.  Put
	\(a_N=\sqrt N\,|\delta_i|/s_i\). On the wrong-sign event the shift and the fluctuation
	pull in opposite directions, so \(|\tilde D_i+N\delta_i|\le|\tilde D_i|\); bounding the
	contribution by Cauchy--Schwarz and the probability of that event by Hoeffding gives
	\[
		\E\{|\tilde D_i+N\delta_i|;\text{ wrong sign}\}
		\le\bigl(\E\tilde D_i^2\bigr)^{1/2}\,
		\Pr\{\text{wrong sign}\}^{1/2}
		\le C\sqrt N\,\exp(-c a_N^2),
	\]
	because \(\E\tilde D_i^2=O(N)\) and the one-trial edge increment is bounded.
	Dividing by \(N|\delta_i|=s_i\sqrt N\,a_N\) gives
	\(O(e^{-c a_N^2}/a_N)=o(1)\) whenever \(a_N\to\infty\). Hence
	\(\E|\tilde D_i+N\delta_i|=N|\delta_i|\,(1+o(1))\) throughout this
	intermediate range.
\end{remark}

\begin{corollary}[Law of large numbers]\label{cor:dXY-GC}
	At fixed $\mathbf p, \mathbf r$:
	\begin{equation}\label{eq:dXY-GC}
		\frac{d_{\mathbf r}}{N} \;\longrightarrow\;
		\sum_{i=1}^{n-1}|\delta_i| = \sum_{i=1}^{n-1}\bigl|(p_{i+1} - p_i) - (r_{i+1} - r_i)\bigr|,
	\end{equation}
	the $\ell_1$ distance between the gradient profiles of $\mathbf p$ and
	$\mathbf r$; the convergence holds in probability, almost surely (by the strong
	law of large numbers~\cite{billingsley}, realizing $X=X^{(N)}$ as the cell counts of the
	first $N$ trials of one i.i.d.\ categorical sequence), and in $L^1$.
	Moreover $\sum_i|\delta_i|=0$ if and only if $\mathbf p=\mathbf r$: equal
	gradient profiles make $\mathbf p-\mathbf r$ constant, and normalization forces
	the constant to be zero.
\end{corollary}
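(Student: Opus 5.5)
The plan is to write $d_{\mathbf r}/N$ as a fixed continuous function of the empirical frequency vector and then transfer the law of large numbers through it. Put $\hat{\mathbf p}_N:=X/N$ and define $F(\mathbf y):=\sum_{i=1}^{n-1}|(y_{i+1}-y_i)-(r_{i+1}-r_i)|$ on $\mathbb R^n$. By \eqref{eq:dXY-def},
\[
	\frac{d_{\mathbf r}}{N}=F(\hat{\mathbf p}_N),
	\qquad
	F(\mathbf p)=\sum_{i=1}^{n-1}|\delta_i| .
\]
The map $F$ is Lipschitz for the $\ell_1$ norm: each term is the absolute value of a linear form, so the triangle inequality gives $|F(\mathbf y)-F(\mathbf y')|\le2\|\mathbf y-\mathbf y'\|_1$. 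The whole argument then reduces to the convergence of $\hat{\mathbf p}_N$ to $\mathbf p$, and no Edgeworth input is needed. In particular it holds for every $n\ge2$ and for boundary laws.

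For the almost-sure statement I will realize all $X^{(N)}$ on one i.i.d.\ categorical sequence $Z_1,Z_2,\dots$, as the statement indicates, so that $\hat p_{N,k}=N^{-1}\sum_{t\le N}\mathbf 1[Z_t=k]$. The strong law applies to each of the $n$ coordinates. Intersecting the $n$ probability-one events gives $\hat{\mathbf p}_N\to\mathbf p$ almost surely, and the Lipschitz bound then yields almost-sure convergence of $d_{\mathbf r}/N$. Convergence in probability is a statement about the marginal law of each $X^{(N)}$ alone, so it does not depend on this coupling and follows from the almost-sure version. Alternatively it follows directly from Chebyshev, using $\E\|\hat{\mathbf p}_N-\mathbf p\|_1\le\sum_k\sqrt{p_k(1-p_k)/N}$.

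For $L^1$ I will bound directly:
\[
	\E\bigl|d_{\mathbf r}/N-F(\mathbf p)\bigr|\le2\sum_k\E|\hat p_{N,k}-p_k|\le2\sum_k\sqrt{p_k(1-p_k)/N}=O(N^{-1/2}),
\]
using Cauchy--Schwarz with the binomial variance \eqref{eq:mult-marg-1}. This also gives the expectation statement, consistent with Theorem~\ref{thm:E-dXY}.

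For the final equivalence, if $\sum_i|\delta_i|=0$ then every $\delta_i=0$. So $e_{i+1}=e_i$ for $\mathbf e=\mathbf p-\mathbf r$, i.e.\ $\mathbf e=c\mathbf 1$, and $0=\sum_k e_k=nc$ forces $c=0$. The converse is trivial.

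The only point needing care is the coupling remark for almost-sure convergence. Strictly, almost-sure convergence is meaningful only once all $N$ are realized on one space, and I would note this explicitly. There is no analytic obstacle.
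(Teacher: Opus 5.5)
Your proof is correct. For the almost-sure and in-probability parts, and for identifiability, it is the paper's argument: the coordinatewise strong law on one categorical sequence, followed by continuity of the edge functional. You package that continuity as the $2$-Lipschitz bound $|F(\mathbf y)-F(\mathbf y')|\le2\|\mathbf y-\mathbf y'\|_1$, whereas the paper applies the continuous mapping theorem edge by edge.

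The routes differ at the $L^1$ step. The paper uses the deterministic bound $d_{\mathbf r}/N\le2+\sum_i|r_{i+1}-r_i|$ and dominated convergence on the coupled space. Alternatively, for interior $\mathbf p$ and $n\ge3$, it uses Scheff\'e's lemma together with Theorem~\ref{thm:E-dXY}. Either way the argument is purely qualitative.

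You instead bound $\E\bigl|d_{\mathbf r}/N-\sum_i|\delta_i|\bigr|$ directly, via the Lipschitz estimate and the binomial variance. This gives the explicit rate $2\sum_k\sqrt{p_k(1-p_k)/N}\le2\sqrt{(n-1)/N}$. That rate does not depend on $\mathbf r$ or $\mathbf p$, holds for every $n\ge2$ including boundary laws, and needs no coupling. As a by-product you get $\E(d_{\mathbf r})/N=\sum_i|\delta_i|+O(N^{-1/2})$ without the interior and $n\ge3$ hypotheses of Theorem~\ref{thm:E-dXY}.

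A small naming point: your direct in-probability route is Markov's inequality applied to $\|\hat{\mathbf p}_N-\mathbf p\|_1$, with its expectation bounded by Cauchy--Schwarz, not Chebyshev.
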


\begin{proof}
	The multinomial strong law of large numbers gives $X_i/N\to p_i$ almost surely
	for every $i$. Hence, for each fixed edge,
	\[
		\frac{D_i-N(r_{i+1}-r_i)}{N}
		=\frac{X_{i+1}}N-\frac{X_i}N-(r_{i+1}-r_i)
		\longrightarrow\delta_i \quad\text{a.s.}
	\]
	The absolute-value function is continuous, so the continuous mapping theorem
	gives convergence of the corresponding absolute value to $|\delta_i|$, almost
	surely and hence in probability.
	The number $n-1$ of edges is fixed; summing these finitely many
	convergences proves \eqref{eq:dXY-GC}. For the $L^1$ statement, note that
	$d_{\mathbf r}/N\le 2+\sum_i|r_{i+1}-r_i|$ is uniformly bounded, so no
	uniform-integrability subtlety arises (alternatively, for interior $\mathbf p$ and $n\ge3$,
	convergence in probability together with $\E(d_{\mathbf r})/N\to\sum_i|\delta_i|$ from
	Theorem~\ref{thm:E-dXY} gives $L^1$ convergence by Scheff\'e's lemma).
	The bound follows from
	\begin{align*}
		\frac1N\sum_i|X_{i+1}-X_i-N(r_{i+1}-r_i)|
		&\le \sum_i\left(\frac{X_i+X_{i+1}}N+|r_{i+1}-r_i|\right)\\
		&\le 2+\sum_i|r_{i+1}-r_i|,
	\end{align*}
	because each observation can be counted at most twice in the neighboring-edge
	sum.  Almost sure convergence plus this deterministic bound implies $L^1$
	convergence by dominated convergence.
	The identifiability statement is elementary: $\delta_i=0$ for all $i$ means
	$p_{i+1}-p_i=r_{i+1}-r_i$ for all $i$, so $p_i-r_i$ is constant in $i$, and
	$\sum_i(p_i-r_i)=0$ forces $p_i=r_i$.
\end{proof}

	For $\mathbf r=\mathbf u$ and the comb alternative $p_i=\tfrac1n+(-1)^i\varepsilon$ ($n$ even) the limit is
	$2(n-1)\varepsilon$, the separation $\E d_{\mathbf u}\approx2(n-1)\varepsilon N$ on which the
	consistency argument of~\cite{banic_elezovic_ct} rests.

\begin{corollary}[Consistency of threshold tests]\label{cor:dXY-consistency}
	Fix $n\ge2$ and a null vector $\mathbf r$. Let
	$t_N$ be any sequence with $\sqrt N\ll t_N\ll N$, and reject
	$H_0:\mathbf p=\mathbf r$ when $d_{\mathbf r}>t_N$. Then the size tends to $0$ and
	the power against every fixed $\mathbf p\ne\mathbf r$ tends to $1$.
\end{corollary}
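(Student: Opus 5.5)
The plan has two halves, size and power, and both follow from the fact that $d_{\mathbf r}$ sits on the $\sqrt N$ scale under the null and on the $N$ scale under a fixed alternative. I will avoid the asymptotic expansions, which need $n\ge3$ and interior laws. Instead I will use only Markov/Chebyshev-type bounds and Corollary~\ref{cor:dXY-GC}, which hold for every $n\ge2$ and every $\mathbf r$.

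\emph{Size.} Under $H_0$ the sampling law is $\mathbf r$, so $d_{\mathbf r}=\sum_{i}|\tilde D_i|$ with $\tilde D_i$ centered at the sampling mean. Jensen's inequality and Lemma~\ref{lem:cov-Dj-multinomial} give
\[
\E|\tilde D_i|\le\sqrt{\Var(D_i)}=\sqrt{N\bigl[(r_i+r_{i+1})-(r_{i+1}-r_i)^2\bigr]}\le\sqrt{2N}.
\]
This bound needs no interiority, and empty bins are allowed. Hence $\E_{\mathbf r}(d_{\mathbf r})\le(n-1)\sqrt{2N}$. Markov's inequality then gives
\[
\Pr_{\mathbf r}(d_{\mathbf r}>t_N)\le\frac{(n-1)\sqrt{2N}}{t_N}\to0,
\]
because $t_N\gg\sqrt N$. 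This covers $n=2$ as well.

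\emph{Power.} Fix $\mathbf p\ne\mathbf r$. By the identifiability part of Corollary~\ref{cor:dXY-GC}, $\Delta:=\sum_i|\delta_i|>0$. The same corollary gives $d_{\mathbf r}/N\to\Delta$ in probability. Since $t_N/N\to0$, eventually $t_N/N<\Delta/2$. Therefore
\[
\Pr_{\mathbf p}(d_{\mathbf r}\le t_N)\le\Pr_{\mathbf p}\bigl(d_{\mathbf r}/N\le\Delta/2\bigr)\to0.
\]
For a quantitative version I would instead use the triangle inequality $d_{\mathbf r}\ge N\Delta-\sum_i|\tilde D_i|$ together with the same Markov bound on $\sum_i|\tilde D_i|$. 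That yields power at least $1-O(\sqrt N/(N\Delta-t_N))$, with no appeal to the law of large numbers.

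No step is a genuine obstacle. The one point needing care is that the size argument must not invoke Theorem~\ref{thm:E-TVC}, since that theorem excludes $n=2$ and boundary $\mathbf r$. The crude variance bound $\Var(D_i)\le2N$ is what makes the statement hold for every $n\ge2$ and every null.
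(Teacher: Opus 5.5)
Your proof is correct and follows the paper's own argument: Jensen's inequality gives $\E_{\mathbf r}d_{\mathbf r}\le\sum_i\sigma_i=O(\sqrt N)$ with no interiority assumption, Markov's inequality then bounds the size, and the law of large numbers of Corollary~\ref{cor:dXY-GC} together with $t_N/N\to0$ gives the power. Your triangle-inequality variant $d_{\mathbf r}\ge N\Delta-\sum_i|\tilde D_i|$ is a valid quantitative bonus that bypasses the law of large numbers; the paper does not use it.
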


\begin{proof}
	Under $H_0$ the statistic is nonnegative with
	$\E(d_{\mathbf r})=\E(d_{\mathbf p})\le\sum_i\sigma_i=O(\sqrt N)$ by Jensen's inequality (for interior
	$\mathbf r$ and $n\ge3$ Theorem~\ref{thm:E-TVC} gives the exact order $\Theta(\sqrt N)$), so Markov's
	inequality gives $\Pr(d_{\mathbf r}>t_N)\le\E(d_{\mathbf r})/t_N\to0$. Under a fixed
	$\mathbf p\ne\mathbf r$, Corollary~\ref{cor:dXY-GC} gives
	$d_{\mathbf r}/N\to\sum_i|\delta_i|>0$ in probability, and $t_N/N\to0$, so
	$\Pr(d_{\mathbf r}>t_N)\to1$.
	Thus the two inequalities on $t_N$ have distinct roles.  The condition
	$t_N/\sqrt N\to\infty$ pushes the threshold above the null fluctuation scale;
	the condition $t_N/N\to0$ keeps it below every fixed-alternative deterministic
	scale.  No distributional limit theorem is needed for this consistency statement;
	first moments and the law of large numbers suffice.

	The corollary describes a sequence of tests whose size tends to $0$. A test of
	\emph{fixed} level $\alpha\in(0,1)$ is consistent a fortiori: if $t_N(\alpha)$ is the
	$(1-\alpha)$-quantile of the null law of $d_{\mathbf r}$, then Markov's inequality gives
	$t_N(\alpha)\le\E_{H_0}(d_{\mathbf r})/\alpha=O(\sqrt N)=o(N)$, so the power argument above
	applies verbatim.
\end{proof}

	\medskip\noindent\emph{Uniformity.} Moving one observation to another bin changes $d_{\mathbf r}$ by at
	most $4$ (each of the two changed counts enters at most two edge terms and changes each by at
	most $1$), so McDiarmid's inequality~\cite{mcdiarmid1989}
	gives $\Pr(|d_{\mathbf r}-\E d_{\mathbf r}|\ge t)\le2e^{-t^2/(8N)}$ under every $\mathbf p$. With
	$\E_{\mathbf p}d_{\mathbf r}\ge N\sum_i|\delta_i|\ge2N\|\mathbf p-\mathbf r\|_1/n$ (Jensen, and the
	inequality $\sum_i|\delta_i|\ge2\|\mathbf p-\mathbf r\|_1/n$ noted after Definition~\ref{def:dXY}),
	the power therefore tends to $1$ uniformly over $\{\|\mathbf p-\mathbf r\|_1\ge\varepsilon\}$, and
	the size is at most $2\exp\{-(t_N-\E_{H_0}d_{\mathbf r})^2/(8N)\}$ --- a sub-Gaussian bound in
	place of Markov's.

	\paragraph{Statistical interpretation} The limit (\ref{eq:dXY-GC}) is the
	separation on which consistency rests, and
	Corollary~\ref{cor:dXY-consistency} converts it into a statement about tests:
	under every fixed alternative $d_{\mathbf r}=\Theta(N)$ while under $H_0$ it is
	$O(\sqrt N)$ (of exact order $\sqrt N$ for interior $\mathbf r$), and any threshold between the two scales works. This consistency
	rests only on the \emph{separation} of the null and alternative scales, and any statistic
	whose two scales separate is consistent in the same way --- Pearson's $\chi^2$ among
	them, with its own pair of scales ($O_P(1)$ under $H_0$, since the limit law is
	$\chi^2_{n-1}$, versus $\Theta(N)$ under a fixed alternative). Consistency is therefore
	\emph{not} by itself evidence of
	an advantage over $\chi^2$; the heuristic advantage discussed in \S\ref{sec:E-dXY}
	concerns \emph{which} alternatives are detected at moderate $N$, not comparative power.
	Critical values at finite $N$ require the null law itself: for rational $\mathbf r$ it is
	computable exactly by a weighted form of the dynamic program of~\cite[Eq.~(4)]{banic_elezovic_ct},
	listed there as future work, and for large $N$ the moments of this series feed a moment-matched
	approximation; unlike the law, the exact expectation of Propositions~\ref{prop:E-TVC-exact}
	and~\ref{prop:dXY-exact} requires no rationality of $\mathbf r$.

% =====================================================================
% =====================================================================
\subsection{$d_{\mathbf r}$: the excluded case $n=2$}
	\label{ssec:dXY-n2}

	The case $n=2$ was excluded from Theorem~\ref{thm:E-dXY} and from
	Theorem~\ref{thm:dXY-subleading} for the reason already met in
	\S\ref{sec:E-TVC-asymp}: the single edge has $\tau_1=p_1+p_2=1$, the increment
	$D_1=X_2-X_1$ moves on a span-$2$ lattice, and
	Lemma~\ref{lem:E-D1-asymp-general} does not apply. It is treated separately in
	the companion paper~\cite{elezovic_folded}, where the statistic collapses to the
	absolute deviation of a binomial about an \emph{arbitrary} point --- the
	reference mean $Nr$, which is not the mean of the data --- and where the
	exponentially small remainder of Theorem~\ref{thm:E-dXY} becomes completely
	explicit. We record the two conclusions used elsewhere in this series. With
	$p:=p_1$, $q:=1-p$, $r:=r_1$, $\bar r:=1-r$ one has $d_{\mathbf r}=2\,|X_1-Nr|$
	exactly, and for fixed $r\in(0,1)$, $r\ne p$ (after relabeling, $r<p$)
	\[
		\E(d_{\mathbf r})=2N(p-r)
		+\frac{4\,e^{-N D(r\Vert p)}\,\rho^{\,\theta_N}}{\sqrt{2\pi N\,r\bar r}}
		\Bigl[\,\frac{\theta_N}{1-\rho}+\frac{\rho}{(1-\rho)^2}\,\Bigr]
		\bigl(1+O(N^{-1})\bigr),
	\]
	with an implied constant that grows as $r\to p$, $\theta_N=\{Nr\}$, $\rho=r q/(\bar r p)<1$ and
	$D(r\Vert p)$ the Kullback--Leibler divergence
	\cite[Proposition~3.1(iii)]{elezovic_folded}. Thus at $n=2$ the $O(e^{-\gamma_iN})$ of
	Theorem~\ref{thm:E-dXY} is a Bahadur--Rao expansion~\cite{bahadur_rao1960}: the exponential rate is
	$D(r\Vert p)$, the polynomial prefactor is $N^{-1/2}$, and the constant
	oscillates boundedly with the lattice defect of the \emph{reference} mean
	rather than converging. Moreover the folded-normal profile and this saddle
	formula agree on the overlap $1\ll\sqrt N(p-r)/\sqrt{pq}\ll N^{1/6}$
	\cite[Proposition~5.1 and Remarks~5.2--5.3]{elezovic_folded}, so the matched and mismatched regimes
	form one chain --- but they must be joined through the standardized mismatch,
	because the limits $r\to p$ and $N\to\infty$ do not commute.

% =====================================================================
\subsection{$d_{\mathbf r}$: consequences of the nonzero shift}
	\label{ssec:dXY-asymmetry}

	Qualitative consequences of the nonzero mean inside the absolute values:
	\begin{enumerate}
		\item \textbf{Half-normal $\to$ folded-normal.} The per-edge expectation is
		$\sigma_i G(a_i)$, where $G$ \emph{nonlinearly interpolates} between
		the half-normal ($G(0) = \sqrt{2/\pi}$) and the deterministic ($G(a) \approx |a|$).
		\item \textbf{Asymmetric input $\mathbf p, \mathbf r$.} $\mathbf p$ enters
		through $\sigma_i$ (and $q_i$), while both vectors enter through the mismatch
		$\delta_i$. Swapping the roles changes the result, but not at leading order:
		$N\sum_i|\delta_i|$ is invariant under $\mathbf p\leftrightarrow\mathbf r$, and the asymmetry
		enters at order $\sqrt N$, through $\sigma_i$ and $G$.
		\item \textbf{Variance irrelevant under strong mismatch.} At an edge with a fixed
		mismatch $\E|\tilde D_i + N\delta_i| = N|\delta_i| + O(e^{-\gamma_iN})$; $\sigma_i$, the
		fluctuation scale under $\mathbf p$, disappears from the leading order.
		\item \textbf{Comb-sign symmetry survives.} For $\mathbf p_\pm=\mathbf r\pm\mathbf e$
		($\sum_ie_i=0$) the mismatches are $\pm(e_{i+1}-e_i)$, so the leading term
		$N\sum_i|\delta_i|$ is the same for both signs; the two signs differ only through
		$\sigma_i(\mathbf p_\pm)$, at lower order. The uncentered $d_{\mathbf u}$ separates them by
		$O(N)$ (paragraph \emph{The uncentered statistic} after the proof of Theorem~\ref{thm:E-dXY}).
		\item \textbf{Edge-dependent regimes.} Under $\mathbf p\ne\mathbf r$ the edges can sit in
		different regimes (Corollary~\ref{cor:dXY-mixed}), so no single scaling describes all
		summands.
	\end{enumerate}

% =====================================================================
\subsection{$d_{\mathbf r}$: numerical validation}
	\label{ssec:dXY-numerics}

	The folded-normal formula~\eqref{eq:E-dXY} was checked against the exact finite-$N$ value of
	Proposition~\ref{prop:dXY-exact} on four
	$n=8$ laws: the \emph{comb} $p_k=\tfrac1n+\tfrac{(-1)^k}{16}$, the \emph{strong
	comb} $p_k=\tfrac1n+\tfrac{(-1)^k}{10}$, a \emph{shallower comb}
	$r_k=\tfrac1n+\tfrac{(-1)^k}{32}$, and the \emph{geometric} law
	$p_k\propto(0.7)^{\,k-1}$ (all for $k=1,\dots,n$). The results are collected in
	Table~\ref{tab:dXY-MC}.
	\begin{table}[ht]
	\centering
	\begin{tabular}{l|r|r|r|c}
		case $(\mathbf p, \mathbf r)$ & $N$ & exact (Prop.~\ref{prop:dXY-exact}) & pred (\ref{eq:E-dXY}) & rel.\ error \\\hline
		strong-comb vs uniform & 200  & $280.000$  & $280.000$  & $<10^{-6}\%$ \\
		& 5000 & $7000.000$ & $7000.000$ & $<10^{-6}\%$ \\
		comb vs same ($=d_{\mathbf p}$) & 200  & $38.165$   & $38.239$   & $+0.194\%$ \\
		& 5000 & $191.181$  & $191.196$  & $+0.0077\%$ \\
		comb vs shallower & 200  & $88.743$   & $88.782$   & $+0.044\%$ \\
		& 5000 & $2187.500$ & $2187.500$ & $<10^{-6}\%$ \\
		geometric vs uniform & 1000 & $293.321$  & $293.346$  & $+0.0085\%$ \\
	\end{tabular}
	\caption{Folded-normal prediction \eqref{eq:E-dXY} against the exact finite-$N$ expectation of
	Proposition~\ref{prop:dXY-exact} ($n=8$; the four laws are defined in the text above); the relative
	error is $(\text{pred}-\text{exact})/\text{exact}$. A Monte Carlo run ($K=200\,000$ replications,
	seed $20\,261\,008$) agrees with the exact values within $2.1$ standard errors in every row.}
	\label{tab:dXY-MC}
	\end{table}
	The leading formula agrees with the exact expectation to within $0.2\%$ for $N\ge200$ in all
	regimes. On edges with a fixed mismatch the error is exponentially small (zero to the digits
	shown); the largest error is in the matched ($d_{\mathbf p}$) case, where it decreases as $O(1/N)$
	(from $0.194\%$ at $N=200$ to $0.0077\%$ at $N=5000$) --- the finite-$N$ corrections of
	Theorem~\ref{thm:E-TVC} reproduce the matched entries to six digits ($38.16526$ at $N=200$).
	In the rows ``comb vs shallower'' ($N=200$) and ``geometric vs uniform'' the standardized mismatches
	are moderate ($|a_i|\le4.2$), and the $O(1)$ correction of Corollary~\ref{cor:dXY-mixed} reduces the
	remaining absolute error by a further factor $4$--$10$ ($0.039\to-0.009$ and $0.025\to0.003$).

% =====================================================================
\section{Conclusion}
	\label{sec:conclusion}

	This paper extends the expectation theory of discrete total variation from the
	uniform null to a general multinomial law. Its exact part is the finite mixture of
	Proposition~\ref{prop:E-TVC-exact} (and Proposition~\ref{prop:dXY-exact} under a different
	sampling law): conditioning on the total of each adjacent pair reduces every edge to a
	translated binomial absolute deviation, for every $n$, every $N$ and every $\mathbf p$. Its
	asymptotic part, for the matched statistic $d_{\mathbf p}$, is the three-term expansion of
	Theorem~\ref{thm:E-TVC}, proved by a weighted lattice Edgeworth estimate and a corner
	Euler--Maclaurin summation: the shape-dependent coefficients, encoded by the adjacent
	probabilities, separate from the universal Edgeworth and Bernoulli-polynomial coefficients, the
	lattice part oscillates with $\{N(p_{j+1}-p_j)\}$, and all half-integer orders vanish by parity.

	The same per-edge viewpoint gives the leading behavior of $d_{\mathbf r}$ under a
	possibly different sampling law. In the matched case one recovers the half-normal mean
	$\sqrt{2/\pi}\,\sigma_i$ per edge, while a fixed mismatch gives a deterministic leading term
	with only exponentially small corrections; the local-alternative expansion of
	Theorem~\ref{thm:dXY-subleading} records the transition between these two regimes, with its
	$O(1)$ skewness term. Fixed alternatives give the law-of-large-numbers limit
	$d_{\mathbf r}/N\to\sum_i|\delta_i|$, the $\ell_1$ distance between adjacent-gradient profiles,
	which vanishes only at $\mathbf p=\mathbf r$ and makes threshold tests consistent. The
	two-bin case, where the edge walk has span $2$, is recorded from the companion
	papers~\cite{elezovic_mad,elezovic_folded}: there the exponentially small correction is
	explicit (a Bahadur--Rao form) and the limits $r\to p$ and $N\to\infty$ do not commute.

	Together with the uniform case, these results provide the expectation part of
	the general theory. The corresponding variance theory is the subject of
	Part~4~\cite{banic_elezovic_p4}; the formulas proved here supply the centering and the
	per-edge asymptotic input needed there.

% =====================================================================
% Bibliography.
% =====================================================================
\section*{Acknowledgments}
	The authors used an AI assistant (Anthropic's Claude) for literature search,
	symbolic and numerical verification, and editorial support; all mathematical
	results were checked independently by the authors.

\end{document}